\theoremstyle{plain}
\newtheorem{thm}{Theorem}[section]
\newtheorem{lemma}[thm]{Lemma}
\newtheorem{prop}[thm]{Proposition}
\newtheorem{cor}[thm]{Corollary}
\theoremstyle{definition}
\newtheorem{question}[thm]{Question}
\newtheorem{defn}[thm]{Definition}
\theoremstyle{remark}
\newtheorem{remark}[thm]{Remark}
\numberwithin{equation}{section}
\def\makeop#1{\expandafter\def\csname#1\endcsname
  {\mathop{\rm #1}\nolimits}\ignorespaces}
\def\makebb#1{\expandafter\def
  \csname bb#1\endcsname{{\mathbb{#1}}}\ignorespaces}
\def\makebf#1{\expandafter\def\csname bf#1\endcsname{{\bf
      #1}}\ignorespaces} 
\def\makegr#1{\expandafter\def
  \csname gr#1\endcsname{{\mathfrak{#1}}}\ignorespaces}
\def\makescr#1{\expandafter\def
  \csname scr#1\endcsname{{\EuScript{#1}}}\ignorespaces}
\def\makecal#1{\expandafter\def\csname cal#1\endcsname{{\mathcal
      #1}}\ignorespaces} 
\def\doLetters#1{#1A #1B #1C #1D #1E #1F #1G #1H #1I #1J #1K #1L #1M
                 #1N #1O #1P #1Q #1R #1S #1T #1U #1V #1W #1X #1Y #1Z}
\def\doletters#1{#1a #1b #1c #1d #1e #1f #1g #1h #1i #1j #1k #1l #1m
                 #1n #1o #1p #1q #1r #1s #1t #1u #1v #1w #1x #1y #1z}
     \def\qed{\qedmark\medbreak}%
\def\qedmark{{\enspace\vrule height 6pt width 5pt depth 1.5pt}}%
\def\Gm{{{\bbG}_{\rm m}}}
\def\Fpbar{\overline{\bbF}_p}
\def\Fp{{\bbF}_p}
\def\Fq{{\bbF}_q}
\def\Qp{{\bbQ}_p}
\def\Zp{{\bbZ}_p}
\def\Qbar{\overline{\bbQ}}
\def\Sh{{\rm Sh}}
\def\wh{\widehat}
\def\wt{\widetilde}
\def\Gm{\mathbb{G}_{\rm m}} 
\def\G{\mathbb{G}}
\def\R{\mathbb{R}}
\def\Q{\mathbb{Q}}
\def\Z{\mathbb{Z}}
\def\A{\mathbb{A}}
\def\C{\mathbb{C}}
\def\o{\mathfrak{o}}
\def\X{\times}
\def\Sq{{\rm Sq}}
\def\char{\text{char }}
\def\embed{\hookrightarrow}
\def\F{\bbF}
\def\ol{\overline}
\def\ul{\underline}
\newcommand{\npr}{\noindent }
\newcommand{\<}{\langle}   
\renewcommand{\>}{\rangle} 
\newcommand{\isoto}{\stackrel{\sim}{\to}}
\def\red#1{{\color{black} #1}}
\newcommand{\xdashrightarrow}[2][]{\ext@arrow 0359\rightarrowfill@@{#1}{#2}}
\newcommand{\xdashleftarrow}[2][]{\ext@arrow 3095\leftarrowfill@@{#1}{#2}}
\newcommand{\xdashleftrightarrow}[2][]{\ext@arrow 3359\leftrightarrowfill@@{#1}{#2}}
\def\rightarrowfill@@{\arrowfill@@\relax\relbar\rightarrow}
\def\leftarrowfill@@{\arrowfill@@\leftarrow\relbar\relax}
\def\leftrightarrowfill@@{\arrowfill@@\leftarrow\relbar\rightarrow}
\def\arrowfill@@#1#2#3#4{%
  $\m@th\thickmuskip0mu\medmuskip\thickmuskip\thinmuskip\thickmuskip
   \relax#4#1
   \xleaders\hbox{$#4#2$}\hfill
   #3$%
}
\begin{document}

\title[CM algebraic tori]{Class numbers of CM algebraic tori, CM abelian varieties and components of unitary Shimura varieties}

\author{Jia-Wei Guo}
\address{(Guo) Department of Mathematics, National Taiwan
University, No.~1, Roosevelt Rd. Sec.~4  
Taipei, Taiwan, 10617}
\email{jiaweiguo312@gmail.com}


\author{Nai-Heng Sheu}
\address{(Sheu) Department of Mathematics, Indiana University, Rawles
  Hall, 831 East 3rd St. Bloomington, IN, USA, 47405}
\email{naihsheu@iu.edu}


\author{Chia-Fu Yu}
\address{(Yu) Institute of Mathematics, Academia Sinica and NCTS,
6F Astronomy Mathematics Building, No.~1, Roosevelt Rd. Sec.~4  
Taipei, Taiwan, 10617}
\email{chiafu@math.sinica.edu.tw}


\date{\today}
\subjclass[2010]{14K22, 11R29.} 
\keywords{CM algebraic tori, class numbers, abelian varieties over finite fields.}  

\maketitle

\begin{abstract}
    We give a formula for the class number of an arbitrary CM algebraic torus over $\Q$. This is proved based on results of Ono and Shyr. As applications, we give formulas for numbers of polarized CM abelian varieties, of connected components of unitary Shimura varieties and of certain polarized abelian varieties over finite fields. We also give a second proof of our main result.   
\end{abstract}

\section{Introduction}

An algebraic torus $T$ over a number field $k$ is a connected linear algebraic group over $k$ such that $T\otimes_k \bar k$ isomorphic to 
$(\Gm)^d \otimes_k \bar k$ over the algebraic closure $\bar k$ of $k$ for some integer $d\ge 1$. The class number, $h(T)$, of $T$ is by definition, the cardinality of $T(k)\backslash T(\A_{k,f})/U_T$, where $\A_{k,f}$ is the finite adele ring of $k$ and $U_T$ is the maximal open compact subgroup of $T(\A_{k,f})$.
As a natural generalization for the class number of a number field, Takashi Ono \cite{Ono-arithmetic-of-tori, Ono-Tamagawa-of-tori-Ann} studied the class numbers of algebraic tori. Let $K/k$ be a finite extension and let $R_{K/k}$ denote the Weil restriction of scalars form $K$ to $k$, then we have the following  exact sequence of tori defined over $k$
$$
1\longrightarrow R^{(1)}_{K/k}(\G_{{\rm m},K})\longrightarrow R_{K/k}(\G_{{\rm m},K})\longrightarrow \G_{{\rm m},k}\longrightarrow 1,
$$
where $R^{(1)}_{K/k}(\G_{{\rm m},K})$ is the kernel of the norm map $N: R_{K/k}(\G_{{\rm m},K})\longrightarrow\G_{{\rm m},k}$. It is easy to see that $h(R_{K/k}(\G_{{\rm m},K}))$ and $h(\G_{{\rm m},k})$ coincide with the class numbers $h_K$ and $h_k$ of $K$ and $k$, respectively. In order to compute the class number $h(R^{(1)}_{K/k}(\G_{{\rm m},K}))$, Ono \cite{ono:nagoya1987} introduced the arithmetic invariant 
\[ E(K/k):= \frac{h_K}{h_k\cdot h(R^{(1)}_{K/k}(\G_{{\rm m},K}))} \]
and expressed it in terms of certain cohomological invariants when $K/k$ is Galois. In \cite{katayama:kyoto1991}, S.~Katayama proved a formula for $E(K/k)$ for any finite extension $K/k$. He also studied its dual arithmetic invariant $E'(K/k)$ and gave a similar formula. The latter gives a formula for the class number of the quotient torus $R_{K/k}(\G_{{\rm m},K})/\G_{{\rm m},k}$. The class numbers of general tori have been investigated by J.-M.~Shyr \cite[Theorem 1] {Shyr-class-number-relation}, M.~Morishita \cite{morishita:nagoya1991}, C.~Gonz\'{a}lez-Avil\'{e}s \cite{gonzalez:mrl2008,gonzalez:crelle2010} and M.-H.~Tran \cite{tran:jnt2017}.

Besides the class number $h(T)$ of an algebraic torus $T$, 
another important arithmetic invariant is the Tamagawa number $\tau(T)$.
Roughly speaking, the Tamagawa number is the volume of a suitable fundamental domain. 
More precisely, for any connected semi-simple algebraic group $G$ over
$k$, one associates the group $G(\A_k)$ of adelic points on $G$, where
$\A_k$ is the adele ring of $k$. As $G(\A_k)$ is a unimodular locally
compact group, it admits a unique Haar measure up to a
scalar. T.~Tamagawa defined a canonical Haar measure on $G(\A_k)$ now
called the Tamagawa measure. The Tamagawa number $\tau(G)$ is then
defined as the volume of the quotient space $G(k)\backslash
  G(\A_{k})$ (or a fundamental domain of it) with respect to the Tamagawa measure. 
Similar to the case of class numbers, the calculation of the Tamagawa number is usually difficult. 
A celebrated conjecture of Weil states that any semi-simple simply connected algebraic group has Tamagawa number $1$. The Weil conjecture has been proved in many cases by many people (Weil, T.~Ono, Langlands, K.-F. Lai and others) and 
it is finally proved by Kottwitz \cite{Kottwitz-Tamagawa-numbers}. 


For a more general linear algebraic group $G$, the quotient
  space $G(k)\backslash G(\A_k)$ may not have finite volume. This
occurs precisely when $G$ has non-trivial characters defined over $k$, that is also the case for tori. For this reason, the necessity of introducing convergence factors in a canonical way leads to the emergence of Artin $L$-functions. We shall recall the definition of the Tamagawa number $\tau(T)$ for any algebraic torus $T$.
Then the famous analytic class number formula can be reformulated by the statement $\tau(\G_{{\rm m},k})=1$. 

 
In this paper we investigate the class numbers of CM tori.
Let $K=\prod_{i=1}^r K_i$ be a CM algebra, where each $K_i$ is a CM field. 
The subalgebra $K^+$ of elements in $K$ fixed by the canonical
involution is the product $K^+=\prod_{i=1}^rK_i^+$ of the maximal
totally real subfield $K_i^+$ of $K_i$.  
Denote $N_i$ as the norm map from $K_i$ to $K^+_i$ and $N_{K/K^+}=\prod_{i=1}^r N_i:K\to K^+$ the norm map. 

Now, we put
$T^K=\prod_{i=1}^r T^{K_i}$ with $T^{K_i}=R_{K_i/\Q}(\G_{{\rm
    m},K_i})$, and $T^{K^+_i}=R_{K^+_i/\Q}(\G_{{\rm m},K_i})$. 
We denote
$$h_K=h(K):=\prod^r_{i=1}h({K_i}),\ \ \ 
h_{K^+}=h({K^+}):=\prod^r_{i=1}h({K^+_i}),\ \ \ Q=Q_{K}:=\prod^r_{i=1} Q_i,$$ 
where $Q_i=Q_{K_i}:=
[O^\times_{K_i}:\mu_{K_i}O^\times_{K^+_i}]$ is the Hasse unit index of
the CM extension $K_i/K_i^+$ and $\mu_{K_i}$ is the torsion subgroup
of $O^{\times}_{K_i}$. One has $h(T^K)=h(K)$ and
$h(T^{K^+})=h(K^+)$. It is known that $Q_i\in \{1,2\}$.
Finally, we let $t=\sum^r_{i=1}t_i$ where $t_i$ is the number of primes in $K^+_i$ ramified in $K_i$.
Then we have the following exact sequence of algebraic tori defined over $\Q$
\begin{equation}\label{exact sequence of T^K}
\begin{tikzcd}
    0\arrow{r} & T^K_1\arrow{r} & T^K\arrow{r}{N_{K\slash K^+}} & T^{K^+}\arrow{r} & 0,
\end{tikzcd}
\end{equation}
where
$T^K_1:=\mathrm{ker} (N_{K/K^+})$, which is the product of norm one subtori $T^{K_i}_1:=\left\{x\in T^{K_i}\mid N_i(x)=1\right\}$.
We regard $\Gm$ as a $\Q$-subtorus of $T^{K^+}$ via the diagonal embedding. Let $T^{K,\Q}$ denote the preimage of $\Gm$ in $T^K$ under the map $N_{K\slash K^+}$. We have the second exact sequence of algebraic tori over $\Q$ as follows. Here, for brevity, we write $N$ for $N_{K\slash K^+}$.

\begin{equation}\label{diagram for exact sequence of T^K}
\begin{tikzcd}
\qquad 0\arrow{r} & T^K_1\arrow{r}{\iota}\arrow[equal]{d} & T^K\arrow{r}{N}& T^{K^+}\arrow{r} & 0 \\
\qquad 0\arrow{r} & T^K_1\arrow{r}{\iota} & T^{K,\Q}\arrow{r}{N}\arrow[hook]{u} & \G_{m}\arrow[hook]{u}{}\arrow{r} & 0.
\end{tikzcd}
\end{equation}
The purpose of this paper is concerned with the class number and the Tamagawa number of $T^{K,\Q}$. Let $T(\Zp)$ denote the unique maximal open compact subgroup of $T(\Qp)$.

\begin{thm}\label{Main theorem}
Let $T^{K,\Q}$ denote the preimage of $\Gm$ in $T^K$ under the map $N_{K\slash K^+}$ as in $(\ref{diagram for exact sequence of T^K})$. 
\begin{enumerate}
\item[(1)] We have
$$\tau(T^{K,\Q})=\frac{2^r}{[\Gm(\A): N(T^{K,\Q}(\A))\cdot \Gm(\Q)]},$$where $r$ is the number of components of $K$. 
\item[(2)] We have
\begin{align*}
h(T^{K,\Q})&=\frac{\prod_{p\in S_{K/K^+}} e_{T,p}}{[\Gm(\A): N(T^{K,\Q}(\A))\cdot \Gm(\Q)]}\cdot h(T^K_1)\\
&=\frac{\prod_{p\in S_{K/K^+}} e_{T,p}}{[\A^\times: N(T^{K,\Q}(\A))\cdot \Q^\times]}\cdot \frac{h(K)}{h({K^+})}\cdot\frac{1}{2^{t-r}Q},
\end{align*}
where 
$e_{T, p}:=[\Z^\times_p: N(T^{K,\Q}(\Z_p))]$ 
and $S_{K/K^+}$ is the set of primes $p$ such that there exists a place $v|p$ of $K^+$ ramified in $K$.
\end{enumerate}
\end{thm}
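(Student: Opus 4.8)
The plan is to derive both statements from the single short exact sequence
$1\to T^K_1 \xrightarrow{\iota} T^{K,\Q} \xrightarrow{N} \Gm \to 1$
appearing as the bottom row of $(\ref{diagram for exact sequence of T^K})$, using Ono's cohomological evaluation of Tamagawa numbers for (1) and Shyr's class number relation for (2). The first step is to compute $\tau(T^K_1)$. Since $T^K_1=\prod_{i=1}^r R_{K_i^+/\Q}\bigl(R^{(1)}_{K_i/K_i^+}(\Gm)\bigr)$ and the Tamagawa number is invariant under restriction of scalars, this reduces to the case of a single CM quadratic extension, where Ono's formula $\tau(T)=|H^1(K_i^+,X^*(T))|/|\mathrm{Sha}^1(T)|$ (computed over the base field $K_i^+$) gives $\tau(R^{(1)}_{K_i/K_i^+}(\Gm))=2$: the group $\Gal(K_i/K_i^+)=\Z/2$ acts on $X^*\cong\Z$ by $-1$, whence $H^1=\Z/2$, while $\mathrm{Sha}^1$ vanishes by the Hasse norm theorem for the cyclic extension $K_i/K_i^+$. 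Taking the product gives $\tau(T^K_1)=2^r$.

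For (1) I would then invoke Ono's multiplicativity relation for Tamagawa numbers along the above exact sequence. On character groups it reads $0\to\Z\to X^*(T^{K,\Q})\to X^*(T^K_1)\to 0$, and since $\tau(\Gm)=1$ is the analytic class number formula, the relation collapses to $\tau(T^{K,\Q})=\tau(T^K_1)\,\tau(\Gm)\big/[\Gm(\A):N(T^{K,\Q}(\A))\cdot\Gm(\Q)]$, which is exactly (1). The substantive point here is to match the abstract cohomological defect in Ono's formula with the concrete global norm index: I would run this identification through Poitou--Tate duality, verifying that the $\mathrm{Sha}^1$ contributions of the three tori cancel and that the connecting map $H^1(\Q,X^*(T^K_1))\to H^2(\Q,\Z)$ contributes no extra factor.

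For (2) I would apply Shyr's class number relation to the same sequence. In normalized form it asserts that $h(T)/\tau(T)$ is multiplicative in a short exact sequence of tori up to the product over finite places of the local norm indices on the maximal compact subgroups of the quotient torus $T''$. Here $T''=\Gm$ and $U_{\Gm}=\prod_p\Z_p^\times$, so these indices are precisely $e_{T,p}=[\Z_p^\times:N(T^{K,\Q}(\Z_p))]$; for $p\notin S_{K/K^+}$ the extension is unramified and the local norm is already surjective on units, so $e_{T,p}=1$ and the product reduces to $\prod_{p\in S_{K/K^+}}e_{T,p}$. Combining this with $h(\Gm)=\tau(\Gm)=1$ and the value of $\tau(T^{K,\Q})$ from (1) yields the first displayed formula for $h(T^{K,\Q})$. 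To obtain the second formula I substitute $h(T^K_1)=\frac{h(K)}{h(K^+)}\cdot\frac{1}{2^{t-r}Q}$, which comes from Ono--Katayama's evaluation $E(K_i/K_i^+)=2^{t_i-1}Q_i$ of Ono's invariant for each quadratic CM extension, so that $h(T^{K_i}_1)=h_{K_i}/(h_{K_i^+}2^{t_i-1}Q_i)$, multiplied over $i=1,\dots,r$. The main obstacle throughout is precisely these two matching steps---identifying Ono's cohomological correction with $[\Gm(\A):N(T^{K,\Q}(\A))\cdot\Gm(\Q)]$ and Shyr's local indices with $e_{T,p}$---and in particular checking that the archimedean place produces no surviving factor beyond the $2^r$ already accounted for.
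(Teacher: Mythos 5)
Your part (1) is essentially the paper's own argument: the paper also derives $\tau(T^{K,\Q})$ from the exact sequence via Ono's relation $\tau(E)=q(\hat{\iota}^\Gamma)q(\mu)$ (Theorem~\ref{theorem tau (E)}), together with $\tau(\Gm)=1$ and $\tau(T^{K_i}_1)=2$. The two identifications you defer to a Poitou--Tate verification are precisely the paper's Lemma~\ref{4.3} (namely $(\wh{T^K_1})^\Gamma=0$, so $\coker\hat{\iota}^\Gamma$ is trivial) and Lemma~\ref{tau(E)=N} (the Hasse norm theorem for the cyclic extensions $K_i/K_i^+$ gives $[N(T^{K,\Q}(\A))\cap\Gm(\Q):N(T^{K,\Q}(\Q))]=1$); these are short arguments and your sketch, though unexecuted, points at the right facts.

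The genuine gap is in part (2). The ``normalized form'' of Shyr's relation that you invoke --- that $h(T)/\tau(T)$ is multiplicative along a short exact sequence of tori up to the product $\prod_p[T''(\Z_p):N(T(\Z_p))]$ --- is not a theorem you can cite: Shyr's result (Theorem~\ref{thm1.2}) is stated for \emph{isogenies}, not exact sequences, and for a general exact sequence such a clean multiplicativity is false (the cokernel of $T(\A_f)\to T''(\A_f)$ involves the local groups $H^1(\Q_p,T')$, there is a potential kernel term $[N(T(\A))\cap T''(\Q):N(T(\Q))]$, one must compare $N(T(\Q_p))\cap T''(\Z_p)$ with $N(T(\Z_p))$, and archimedean and unit contributions enter). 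In this CM situation all of these corrections do collapse, but verifying that is exactly the content of the proof, so your invocation assumes what is to be proved. Concretely, the paper constructs the isogeny $\lambda\colon T^{K,\Q}\to T^K_1\times\Gm$, $x\mapsto(x^2N(x)^{-1},N(x))$, and computes every $q$-symbol in Shyr's formula: $q(\lambda_\infty)=2^{-d+1}$ (Lemma~\ref{4.6}), $q(\lambda_\Z)=2$ (Lemma~\ref{4.7}), $q(\hat{\lambda}^\Gamma)=1$ (Lemma~\ref{4.8}), and $q(\lambda_p^c)=e_{T,p}$ for $p\neq 2$ but $e_{T,2}\cdot 2^d$ at $p=2$ (Lemmas~\ref{4.10} and~\ref{4.11}); the stated formula emerges only after the nontrivial cancellation of the $2^{d}$ factor at $p=2$ against $2^{-d+1}/2$ from the archimedean and integral-unit terms, together with Lemma~\ref{2.1} ($N(T^{K,\Q}(\Q_p))\cap\Z_p^\times=N(T^{K,\Q}(\Z_p))$) and the norm-theorem input above. (The paper's second proof avoids Shyr altogether and instead runs the adelic double-coset computation directly, again using the Hasse principle and positivity at the real places.) None of this appears in your proposal, so as written it reduces the theorem to an unproved --- and in general false --- multiplicativity principle.
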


To make the formulas in Theorem~\ref{Main theorem} more explicit, one needs to calculate the indices $e_{T,p}$ and $[\A^\times: N(T^{K,\Q}(\A))\cdot \Q^\times]$. We determine the index $e_{T,p}$ for all primes $p$; the description in the case where $p=2$ requires local norm residue symbols. For the global index $[\A^\times: N(T^{K,\Q}(\A))\cdot \Q^\times]$, we could only compute some special cases including the biquadratic fields and therefore obtain a clean formula for these CM fields. 
For example if $K=\Q(\sqrt{p},\sqrt{-1})$ with prime $p$, then 
\begin{equation}
    h(T^{K,\Q})=\begin{cases}
    1 & \text{if $p=2$;}\\
    h(-p) & \text{if $p\equiv 3 \pmod 4$;}\\
    h(-p)/2 &  \text{if $p\equiv 1 \pmod 4$,}\\
    \end{cases}
\end{equation}
where $h(-p):=h(\Q(\sqrt{-p}))$.
The global index $[\A^\times: N(T^{K,\Q}(\A))\cdot \Q^\times]$ may
serve another invariant which measures the complexity of CM fields and
it requires further investigation. Nevertheless, the indices $e_{T,p}$
and  $[\A^\times: N(T^{K,\Q}(\A))\cdot \Q^\times]$ are all powers of
$2$ (in fact $e_{T,p}\in \{1,2\}$ if $p\neq 2$). Then from
Theorem~\ref{Main theorem} we deduce 
\[ h(T^{K,\Q})=\frac{h_{K}}{h_{K^+}}\frac{2^e}{2^{t-r}\cdot Q}, \] 
where $t$ and $r$ are as in Theorem~\ref{Main theorem},  $e$ is an
integer with $0\le e\le {e(K/K^+,\Q)}$ and $e(K/K^+,\Q)$ is the
invariant defined in \eqref{eKK}. In particular, we conclude
that  $h(T^{K,\Q})$ is equal to $h_{K}/h_{K^+}$ only up to $2$-power.

It is well known that the double coset space $T^{K,\Q}(\Q)\backslash
T^{K,\Q}(\A_f)/T^{K,\Q}(\wh \Z)$ parameterizes CM abelian varieties
with additional structures and conditions. Thus, Theorem~\ref{Main
  theorem} counts such CM abelian varieties and yields a upper bound
for CM points of Siegel modular varieties. There are several
investigations on CM points in the literature which have interesting
applications and we mention a few for the reader's information.
Ullmo and Yafaev \cite{ullmo-yafaev:2015} give a lower bound for Galois orbits of CM points in a Shimura variety. This plays an important role towards the proof of the Andr\'e-Oort conjecture under the Generalized Riemann Hypothesis. Under the same assumption Daw \cite{daw:torsion2012} proves a upper bound of $n$-torsion of the class group of a CM torus, motivated from a conjecture of S.-W. Zhang \cite{swzhang:2005}.    

On the other hand, one can also express the number of connected components of a complex unitary Shimura variety $\Sh_U(G,X)_\C$ as a class number of $T^{K,\Q}$ or $T^K_1$. Thus, our result also gives an explicit formula for $|\pi_0(\Sh_U(G,X)_\C)|$. This information is especially useful when the Shimura variety $\Sh_U(G,X)$ (over the reflex field) has good reduction modulo $p$. Indeed, by the existence of a smooth toroidal compactification due to K.-W. Lan \cite{lan:thesis}, the geometric special fiber $Sh/\Fpbar$ of $\Sh_U(G,X)$ has the same number of connected components of $\Sh_U(G,X)_\C$. In some special cases, one may be able to show that an stratum (eg.~Newton, EO or leaves) in the special fiber is "as irreducible as possible", namely, the intersection with each connected component of $Sh/\Fpbar$ is irreducible. In that case the stratum then has the same number of irreducible components as 
those of $\Sh_U(G,X)_\C$. 

In \cite{achter:GU1n-1} Achter studies the geometry of the reduction modulo a prime p of the unitary Shimura variety associated to $GU(1,n-1)$, extending the work of B\"ultel and Wedhorn \cite{bueltel-wedhorn} (in fact Achter considers one variant of moduli spaces). Though the main result asserts the irreducibility of each non-supersingular Newton stratum in the special fiber $Sh/\Fpbar$, the proof actually shows the "relative irreducibility". That is, every non-supersingular Newton stratum $\calW$ in each connected component of $Sh/\Fpbar$ is irreducible (and non-empty). 
Thus, $\calW$ has $|\pi_0(Sh/\Fpbar)|$ irreducible components and we give an explicit formula for the number of its irreducible components.

There is also a connection of class numbers of CM tori with
the polarized abelian varieties over finite fields. Indeed, 
the set of polarized abelian varieties within a fixed isogeny class can be decomposed into certain orbits which are the analogue of genera of the lattices in a Hermitian space. When the common endomorphism algebra of these abelian varieties is commutative, each orbit is isomorphic to the double coset space associated to either $T^{K,\Q}$ or $T^K_1$ (see Section~\ref{sec:CM}). Marseglia \cite{marseglia:pol_ord_av}
gives an algorithm to compute isomorphism classes of square-free polarized ordinary abelian varieties defined over a finite field. Achter, Altug and Gordon \cite{achter-altug-gordon} also study principally polarized ordinary abelian varieties within an isogeny class over a finite field from a different approach. They utilize the Langlands-Kottwitz counting method and express the number of abelian varieties in terms of discriminants and a product of certain local density factors, reminiscent of the Smith-Minkowski Siegel formula (cf.~\cite[Section 10]{gan-yu:duke2000}).


This paper is organized as follows. Section 2 recalls the definition of the Tamagawa number of an algebraic torus. The proof of Theorem~\ref{Main theorem} is given in Section~\ref{sec:P}. In Section~\ref{sec:I} we compute the local and global indices appearing in Theorem~\ref{Main theorem} and give an improvement and a second proof. We calculate the class number of the CM torus associated to any biquadratic CM field in Section~\ref{sec:E}. In the last section we discuss applications of Theorem~\ref{Main theorem} to polarized CM abelian varieties, connected components of unitary Shimura varieties and polarized abelian varieties with commutative endomorphism algebras over finite fields.

\section{Tamagawa numbers of algebraic tori}
Following \cite{Ono-arithmetic-of-tori}, we recall the definition of
Tamagawa number of an algebraic torus $T$ over a number field $k$. 
Fix the natural Haar measure $dx_v$ on $k_v$ for each place $v$ 
such that it has measure $1$ on the ring of integers $\o_v$ in the
non-archimedean case,  
measure $1$ on $\R/\Z$ in the real place case, and measure $2$ on
$\C/\Z[i]$ in the complex place case. 
Let $\omega$ be a nonzero invariant differential form of $T$ of
highest degree defined over $k$. 
To each place $v$, one associates a Haar measure $\omega_v$ on
$T(k_v)$. 
We say that the product of the Haar measures 
\begin{equation}\label{product of local measure}
\omega_{\A}={\prod_v} \omega_v
\end{equation}
converges absolutely if the product
\[ \prod_{v\nmid \infty} \omega_v(T(\o_v)) \]
converges absolutely, where $T(\o_v)\subset T(k_v)$ 
is the maximal open compact subgroup. In this case, one defines
a Haar measure  $\omega_\A$ on the locally compact topological group
$T(\A_k)$. 
Since the space of invariant differential forms is a one-dimensional
$k$-vector space, 
by the product formula, 
the Haar measure $\omega_\A$ does not depend on the choice of
$\omega$, which is called the canonical measure. 
 
However, the measure $(\ref{product of local measure})$
does not converge if $T$ admits a non-trivial rational
character. Thus, we must modify the local measures by suitable
convergence factors $\lambda_v$ for each $v$ so that  
the product ${\prod\limits_v} (\lambda_v \cdot\omega_v)$ is
absolutely convergent 
on $T(\A_k)$.
Such a collection $\lambda=\left\{\lambda_v\right\}$ is called a set
of convergence factors for $\omega$; the resulting measure is denoted
by $\omega_{\A,\lambda}$. 

Suppose $T$ splits over a Galois extension $K/k$ with Galois group $\grg$.
The group $\wh T:=\Hom_K(T,\Gm)$ of characters is a finite free $\Z$-module with a continuous action of $\grg$. Let $\chi_T:\grg\to \C$ be the character associated to the representation 
$\wh T\otimes \Q$ of $\grg$.

Let $\chi_i,\ 1\leq i\leq h,$ be all the irreducible characters of $\mathfrak{g}$ and we denote $\chi_1$ as the trivial character.
Express
$\chi_T=\sum^h_{i=1}a_i\chi_i$
as the sum of irreducible characters $\chi_i$ with non-negative
integral coefficients. Note that $a_1$ is the rank of the group
  $(\wh T)^\grg$ of rational characters. 
The Artin $L$-function of $\chi_T$ with respect to the field extension $K/k$ is equal to
$$L(s, \chi_T,K\slash k)=\zeta_k(s)^{a_1}\prod_{i=2}^h L(s, \chi_i,K\slash k)^{a_i}.$$ 
We define the number $\rho(T)$ to be the non-zero number
$\lim_{s\rightarrow 1}(s-1)^{a_1}L(s,\chi_T,K/k)$, i.e.,
$$
\rho(T)=\left(\Res _{s=1}\zeta_k(s)\right)^{a_1}\prod_{i=2}^h L(1, \chi_i,K/k)^{a_i}.
$$

On the other hand,
note that there exists a finite set $S$ of places of $k$ such that 
$T\otimes k_v$ admits a smooth model over $\o_v$ 
for each finite place $v$ outside $S$. 
For such $v$, the reduction map 
$
T(\o_v)\rightarrow T(k(v))
$ is surjective, where $k(v)\simeq \F_{q_v}$ is the residue field of $\o_v$.
Let $T^{(1)}(\o_v)$ be the kernel of the reduction map. By \cite[Theorem 2.2.5]{Weil-book} and \cite[(3.3.2)]{Ono-arithmetic-of-tori}, we have
\begin{align*}
\int_{T(\o_v)}\ \omega_{v}&=\vert T(k(v))\vert\times\int_{T^{(1)}(\o_v)}\ \omega_{v} \\
&=\frac{\vert T(k(v))\vert}{q_v^{d}}=L_v(1,\chi_T,K/k)^{-1},
\end{align*}
where $d$ is the dimension of $T$ and
$L_v(s,\chi_T,K/k)$ is the local factor of the Artin $L$-function at $v$.
We now choose the set of convergence factors $\left\{\lambda_v\right\}$ such that
$\lambda_v$ is equal to $1$ if $v$ is archimedean and is equal to $L_v(1, \chi_T, K\slash k)$ otherwise, and hence define a measure $\omega_{\A,\lambda}$ on $T(\A_k)$.

Let $\xi_i, i=1,\dots, a_1$, be a basis of $(\wh T)^\grg$. Define
\[ \xi: T(\A_k) \to \R_{+}^{a_1}, \quad x\mapsto 
(\vert\vert \xi_1(x)||,\dots, \vert \vert\xi_{a_1}(x)\vert\vert), \]
where $\R_+:=\{x>0\in \R\}$. 
Let $T(\A_k)^1$ denote the kernel of $\xi$; one has an isomorphism $T(\A_k)/T(\A_k)^1 \simeq \R^{a_1}_{+} \subset (\R^\X)^{a_1}$. Let $d^\times t:=\prod_{i=1}^{a_1} dt_i/t_i$ be the canonical measure on $\R^{a_1}_{+}$. 
Let $\omega^1_{\A,\lambda}$ be the unique Haar measure on $T(\A_k)^1$ such that 
$\omega_{\A,\lambda}=\omega^1_{\A,\lambda}\cdot d^\times t$, that is, for any measurable function $F$
on $T(\A_k)$ one has
\[ \int_{T(\A_k)/T(\A_k)^1} \int_{T(\A_k)^1} F(xt) \,
\omega^1_{\A,\lambda}\cdot d^\times t=\int_{T(\A_k)} F(x)\,
\omega_{\A,\lambda}. \]
By a well-known theorem of Borel and Harish-Chandra \cite[Theorem
  5.6]{platonov-rapinchuk}, the quotient
  space $T(\A_k)^1/T(k)$ has finite volume with respect to a Haar
  measure. 
The Tamagawa number of $T$ 
is then defined by
\begin{equation}
\tau(T):=\frac{|d_k|^{-\frac{\dim T}{2}}\cdot\int_{T(\A_k)^1/T(k)}\ \omega^1_{\A,\lambda}}{\rho(T)},
\end{equation}
where $d_k$ is the discriminant of the field $k$. 


\section{Proof of Theorem~\ref{Main theorem}}\label{sec:P}

\subsection{$q$-symbols and relative class numbers}
Suppose $\alpha : G \rightarrow G'$ is a homomorphism of abelian groups such that $\ker\alpha$ and $\coker\ \alpha$ are finite. Following Tate, the {\it q-symbol of $\alpha$} is defined by $$q(\alpha):=\vert\coker\ \alpha\vert\slash \vert\ker\alpha\vert .$$ It is easy to see whenever both $G$ and $G'$ are finite, one has $q(\alpha)=\vert G'\vert/\vert G\vert$. 
Let $\Gamma=\Gamma_\Q=\Gal(\Qbar/\Q)$ denote the Galois group of $\Q$.
For any
isogeny $\lambda:T\rightarrow T' $ of algebraic tori defined over $\Q$, we have the following induced maps:  
\begin{align*}
&\hat{\lambda}:\wh{T}'\rightarrow \wh{T},\ \ \  
\hat{\lambda}^\Gamma:(\wh{T}')^\Gamma \rightarrow (\wh{T})^\Gamma,  \\ 
& \lambda^c_p=\lambda_{\Zp}:T(\Z_p)\rightarrow T'(\Z_p), \\
& \lambda_\infty:T(\R) \rightarrow T'(\R), \\
&\lambda_{\Z}: T(\Z) \rightarrow T'(\Z).
\end{align*}
Thus, we have the corresponding $q$-symbols. Note that $T(\Z)=T(\Q)\cap [T(\R) \times \prod_{p<\infty}T(\Z_p)]$.


Shyr \cite{Shyr-class-number-relation} showed that
these $q$-symbols play a role in the connection between the ratios of Tamagawa numbers and class numbers of $T$ and $T'$ as follows.

\begin{thm}\label{thm1.2}
Let $\lambda: T \rightarrow T'$ be an isogeny of algebraic tori defined over $\Q$. Then \[\frac{h(T)}{h({T'})}= \frac{\tau(T)}{\tau({T'})}\cdot\frac{q(\lambda_{\infty})}{q(\lambda_{\Z}) q(\hat{\lambda}^\Gamma)}\cdot \prod_{p<\infty} q(\lambda^c_p).\]
\end{thm}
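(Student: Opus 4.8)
The strategy is to express both $h(T)$ and $\tau(T)$ in terms of the single finite-volume quotient $T(\A)^1/T(\Q)$ together with its maximal compact subgroup, and then to push the whole picture through $\lambda$, comparing source and target place by place via $q$-symbols (here $\A=\A_\Q$, and I write $\omega_T^1$ for the Tamagawa measure on $T(\A)^1$ to avoid clash with the isogeny $\lambda$). The decisive preliminary remark is that isogenous tori have isomorphic rational character spaces, $\wh T\otimes\Q\cong\wh{T}'\otimes\Q$ as $\Gamma$-representations; hence $\chi_T=\chi_{T'}$, so $T$ and $T'$ share the same integer $a_1$, the same value $\rho(T)=\rho(T')$, the same dimension, and the same local convergence factors $L_v(1,\chi_T,K/\Q)$. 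In $\tau(T)/\tau(T')$ the normalizing constants $\rho$ and $|d_\Q|^{\dim T/2}$ therefore cancel, reducing that ratio to a ratio of Tamagawa volumes,
\[
\frac{\tau(T)}{\tau(T')}=\frac{\mathrm{vol}_{\omega_T^1}(T(\A)^1/T(\Q))}{\mathrm{vol}_{\omega_{T'}^1}(T'(\A)^1/T'(\Q))},
\]
where the two measures now agree factor by factor.

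First I would record, for $T$ and likewise for $T'$, the four-term exact sequence
\[
0\longrightarrow T(\Z)\longrightarrow W_T\longrightarrow T(\A)^1/T(\Q)\longrightarrow C_T\longrightarrow 0,
\]
in which $W_T:=T(\R)^1\times T(\wh\Z)$, with $T(\R)^1:=T(\R)\cap T(\A)^1$, is a compact open subgroup of $T(\A)^1$, and $C_T:=T(\Q)\backslash T(\A_f)/T(\wh\Z)$ is the class group, so $|C_T|=h(T)$. Since $\lambda$ is an isogeny, and unit groups are finitely generated while class numbers are finite, each vertical map induced by $\lambda$ has finite kernel and cokernel, so all $q$-symbols are defined. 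Multiplicativity of the $q$-symbol across the map of four-term sequences gives $q(\lambda_\Z)\,q(\bar\lambda)=q(\lambda|_{W_T})\,q(\lambda|_{C_T})$, where $\bar\lambda$ is the induced map $T(\A)^1/T(\Q)\to T'(\A)^1/T'(\Q)$; and, $C_T,C_{T'}$ being finite, $q(\lambda|_{C_T})=h(T')/h(T)$. This isolates
\[
\frac{h(T)}{h(T')}=\frac{q(\lambda|_{W_T})}{q(\lambda_\Z)\,q(\bar\lambda)}.
\]
The compact term splits as $q(\lambda|_{W_T})=q(\lambda|_{T(\R)^1})\cdot\prod_{p<\infty}q(\lambda_p^c)$, the product being finite because $\lambda$ has good reduction at almost all $p$; and the sequence $1\to T(\R)^1\to T(\R)\to\R_+^{a_1}\to 1$, whose induced map on $\R_+^{a_1}$ is a topological isomorphism and so has trivial $q$-symbol, identifies $q(\lambda|_{T(\R)^1})=q(\lambda_\infty)$.

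It remains to evaluate $q(\bar\lambda)$, and this is where the Tamagawa number and the character term enter. For a morphism between these finite-volume groups with finite kernel and cokernel the $q$-symbol equals the ratio of covolumes divided by the local measure-distortion, so $q(\bar\lambda)=\frac{\tau(T')}{\tau(T)}\,J^{-1}$, where $J$ is the distortion of $\lambda$ on $T(\A)^1$ relative to $\omega_T^1,\omega_{T'}^1$. To compute $J$ I would use the product decomposition $T(\A)=T(\A)^1\times\R_+^{a_1}$ with $\omega_{T,\A}=\omega_T^1\otimes d^\times t$: the total distortion of $\lambda$ on $T(\A)$ factors as $J\cdot J_{\R_+}$, the factor on $\R_+^{a_1}$ being the Jacobian of the induced linear map in logarithmic coordinates, namely $|\det\hat\lambda^\Gamma|=q(\hat\lambda^\Gamma)$. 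On the other hand the total distortion on $T(\A)$ is $\prod_v|c|_v=1$ by the product formula, where $\lambda^*\omega'=c\,\omega$ with $c\in\Q^\times$ (the equal convergence factors cancelling between source and target). Hence $J=q(\hat\lambda^\Gamma)^{-1}$ and $q(\bar\lambda)=\frac{\tau(T')}{\tau(T)}\,q(\hat\lambda^\Gamma)$. Substituting the three evaluations into the displayed identity for $h(T)/h(T')$ yields exactly the asserted formula.

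The principal obstacle is this last archimedean-and-character step: one must disentangle the non-compact direction $\R_+^{a_1}$ simultaneously from $T(\R)$ (which produces $q(\lambda_\infty)$), from the compact group $W_T$, and from the Tamagawa volume, and verify that the only surviving normalization discrepancy is recorded, with the correct exponent, by $q(\hat\lambda^\Gamma)$. Pinning this down requires the product-formula cancellation $\prod_v|c|_v=1$, the factorization of the measure distortion along $T(\A)^1\times\R_+^{a_1}$, and the identification of the $\R_+^{a_1}$-Jacobian with $\det\hat\lambda^\Gamma$ through the bases of $(\wh T)^\Gamma$ and $(\wh{T}')^\Gamma$ that define $d^\times t$. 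By contrast the finite places are routine once good reduction is invoked, and the finiteness needed for every $q$-symbol to be well defined is immediate from the isogeny hypothesis together with the Dirichlet unit theorem and the finiteness of class numbers.
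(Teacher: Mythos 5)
The paper offers no proof of this theorem at all: it simply cites Shyr \cite[Theorem 2]{Shyr-class-number-relation}. Measured against what such a proof must actually accomplish, your attempt has a genuine gap: the two middle $q$-symbols on which your whole argument rests, $q(\lambda|_{W_T})$ and $q(\bar\lambda)$, are undefined for every non-trivial isogeny. The kernel of $\lambda$ on $W_T=T(\R)^1\times T(\wh\Z)$ contains $\prod_p\ker\lambda(\Z_p)$, and $\ker\lambda(\Z_p)\neq 1$ for infinitely many $p$: by Chebotarev, $\Gal(\Qbar/\Q_p)$ acts trivially on $\ker\lambda(\Qbar)$ for infinitely many $p$, and torsion points always lie in the maximal compact subgroup. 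Concretely, for $\lambda=[2]:\Gm\to\Gm$ one has $W_T/T(\Z)\simeq T(\A)^1/T(\Q)\simeq\wh\Z^\times$ and the induced maps are squaring, whose kernel $\prod_p\{\pm1\}$ and cokernel $\wh\Z^\times/(\wh\Z^\times)^2$ are both infinite --- even though $q(\lambda_p^c)=1$ for every odd $p$. This is exactly the failure of the principle you use implicitly: the $q$-symbol of an infinite product of maps is \emph{not} the product of the $q$-symbols, since every factor can have $q=1$ while kernels and cokernels accumulate. So your claim that ``each vertical map induced by $\lambda$ has finite kernel and cokernel'' is false for the two middle columns, the four-term multiplicativity cannot be invoked, the identity $q(\lambda|_{W_T})=q(\lambda_\infty)\prod_p q(\lambda_p^c)$ equates an undefined left-hand side with a well-defined right-hand side, and the covolume formula $q(\bar\lambda)=\tfrac{\tau(T')}{\tau(T)}J^{-1}$ requires finite kernel and open finite-index image, both of which fail. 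The same problem infects the computation of $J$: the map $\lambda_\A$ has infinite compact kernel and non-open image (again $[2]$ on $\A^\times$), so there is no well-defined ``total distortion on $T(\A)$'' to which the product formula $\prod_v|c|_v=1$ can be attached without further argument.

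The skeleton is salvageable, and is in fact close to the Ono--Shyr argument, but the global adelic $q$-symbols must be replaced by volume computations in which every finiteness assertion actually holds. From your exact sequence one gets $\mathrm{vol}(T(\A)^1/T(\Q))=h(T)\cdot\mathrm{vol}(W_T/T(\Z))$, hence, using your correct observation $\rho(T)=\rho(T')$, the reduction $h(T)/h(T')=\tfrac{\tau(T)}{\tau(T')}\cdot\mathrm{vol}(W_{T'}/T'(\Z))/\mathrm{vol}(W_T/T(\Z))$. One then factors $\mathrm{vol}(W_T/T(\Z))=\mathrm{vol}(T(\R)^1/T(\Z))\cdot\prod_p\mathrm{vol}(T(\Z_p))$ and compares place by place: at each finite $p$ the compactness of $T(\Z_p)$ gives $\omega'_p(T'(\Z_p))=q(\lambda_p^c)\,|c|_p\,\omega_p(T(\Z_p))$, where $\lambda^*\omega'=c\,\omega$ with $c\in\Q^\times$; at infinity one compares the \emph{compact} quotients $T(\R)^1/T(\Z)\to T'(\R)^1/T'(\Z)$, whose kernel and cokernel genuinely are finite, and this produces $q(\lambda_\infty)/q(\lambda_\Z)$ together with the archimedean distortion $|c|_\infty/q(\hat\lambda^\Gamma)$; finally the product formula cancels $\prod_v|c|_v$. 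Your two correct local ingredients --- the identification $q(\lambda|_{T(\R)^1})=q(\lambda_\infty)$ via bijectivity of the induced map on $\R_+^{a_1}$, and the identification of the $\R_+^{a_1}$-Jacobian with $q(\hat\lambda^\Gamma)$ --- survive unchanged in this corrected argument; what cannot survive is any step treating $\lambda$ on $T(\wh\Z)$, on $W_T$, or on $T(\A)^1/T(\Q)$ as a map admitting a $q$-symbol.
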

\begin{proof}
See \cite[Theorem 2]{Shyr-class-number-relation}. \qed
\end{proof}

For any exact sequence
\begin{center} 
\begin{tikzcd}
\notag(E)  \qquad 0\arrow{r} & T'\arrow{r}{\iota} & T\arrow{r}{N}& T''\arrow{r} & 0 
\end{tikzcd}
\end{center}
of algebraic tori defined over $\Q$,
we associate a number to the exact sequence $(E)$ by \cite[Section 4]{Ono-Tamagawa-of-tori-Ann}
\begin{equation}\label{tau(E)}
\tau(E):=\tau(T'')\cdot \tau(T')/ \tau(T).
\end{equation}
\begin{thm}\label{theorem tau (E)}
Let $\mu:T''(\Q)/N(T(\Q))\rightarrow T''(\A)\slash N(T(\A))$ and $\hat{\iota}^\Gamma: {\wh{T}}^\Gamma \rightarrow \wh{T'}^\Gamma$ be the maps derived from the exact sequence $(E)$. Then the subgroups $\coker \mu$ and $\ker \mu$ are finite, and we have $$\tau(E)=q(\hat{\iota}^\Gamma)q(\mu)=\vert\coker\ \hat{\iota}^\Gamma\vert \cdot\frac{[T''(\A): N(T(\A))\cdot T''(\Q)]}{[N(T(\A))\cap T''(\Q): N(T(\Q))]}.$$
\end{thm}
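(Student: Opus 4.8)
The plan is to reduce $\tau(E)$ to a pure ratio of covolumes of the ``norm-one'' quotients, and then to account for the two sources of non-multiplicativity separately. Since everything is over $\Q$ we have $|d_\Q|=1$, so the discriminant factors are vacuous. The key structural input is that the rational character representation splits over $\Q$ (Maschke), giving a $\grg$-isomorphism $\wh T\otimes\Q\cong(\wh{T''}\otimes\Q)\oplus(\wh{T'}\otimes\Q)$ and hence $\chi_T=\chi_{T'}+\chi_{T''}$. I would draw three consequences: $\dim T=\dim T'+\dim T''$; the local convergence factors $\lambda_v=L_v(1,\chi_\bullet,K/\Q)$ are multiplicative along the local exact sequences, so the canonical measures are compatible; and, comparing leading terms of the Artin $L$-functions via $L(s,\chi_T)=L(s,\chi_{T'})L(s,\chi_{T''})$, one gets $a_1(T)=a_1(T')+a_1(T'')$ and $\rho(T)=\rho(T')\rho(T'')$. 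Plugging these into the definition of $\tau$, the $\rho$-factors cancel in $\tau(E)=\tau(T')\tau(T'')/\tau(T)$, leaving
\[
\tau(E)=\frac{\mathrm{vol}\big(T'(\A)^1/T'(\Q)\big)\,\mathrm{vol}\big(T''(\A)^1/T''(\Q)\big)}{\mathrm{vol}\big(T(\A)^1/T(\Q)\big)},
\]
all volumes with respect to $\omega^1_{\A,\lambda}$ (finite by the Borel--Harish-Chandra theorem cited above).

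Next I would analyze the adelic sequence $1\to T'(\A)\xrightarrow{\iota}T(\A)\xrightarrow{N}T''(\A)$ together with the maps $\xi_\bullet$ cutting out the norm-one subgroups, and identify the two finite discrepancies that arise when one restricts $N$ to $T(\A)^1$ and passes to quotients by rational points. The failure of $N$ to be onto $T''(\A)^1$ modulo rational points is exactly the index $[T''(\A):N(T(\A))\,T''(\Q)]=|\coker\mu|$, and the defect of the Hasse norm principle, $[N(T(\A))\cap T''(\Q):N(T(\Q))]=|\ker\mu|$, records the overcounting of rational points; together these give $q(\mu)$. The remaining factor comes from comparing the Haar measures on the three norm-one groups: the invariant lattices sit in the left-exact sequence $0\to(\wh{T''})^\Gamma\to(\wh T)^\Gamma\xrightarrow{\hat\iota^\Gamma}(\wh{T'})^\Gamma$, whose finite cokernel injects into $H^1(\grg,\wh{T''})$. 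This cokernel measures the change of covolume between $T(\A)/T(\A)^1\cong\R_+^{a_1(T)}$ and the product of the corresponding factors for $T'$ and $T''$; the kernel $(\wh{T''})^\Gamma$ is exactly the coordinate lattice of $T''(\A)/T''(\A)^1$ and so is absorbed by $T''$'s own normalization, leaving the net contribution $|\coker\hat\iota^\Gamma|$. Assembling the pieces yields $\tau(E)=|\coker\hat\iota^\Gamma|\cdot q(\mu)$, which equals $q(\hat\iota^\Gamma)\,q(\mu)$ whenever $\hat\iota^\Gamma$ is injective, and unfolds to the stated index expression. For the asserted finiteness, I would argue that $\coker(N\colon T(\A)\to T''(\A))$ and $\coker(N\colon T(\Q)\to T''(\Q))$ embed into $H^1(\A,T')$ and $H^1(\Q,T')$; finiteness of class numbers together with the finiteness of the Galois cohomology of the torus $T'$ then forces both $\coker\mu$ and $\ker\mu$ (a subquotient of $\mathrm{Sha}^1(T')$) to be finite.

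The main obstacle is the measure bookkeeping in the second step: $N|_{T(\A)^1}$ is in general neither surjective onto $T''(\A)^1$ nor has kernel exactly $T'(\A)^1$, and I must show that, after choosing compatible bases of the invariant lattices and using the multiplicativity of the $\lambda_v$ and of $d^\times t$ on the $\R_+$-factors, the two finite discrepancies are precisely $q(\hat\iota^\Gamma)$ and $q(\mu)$ with no spurious constant. This is the computation of Ono in \cite[Section 4]{Ono-Tamagawa-of-tori-Ann}, and I would follow it directly. As a cross-check kept in reserve, one can instead substitute Ono's cohomological evaluation of $\tau$ for each of $T,T',T''$ and read off $\tau(E)$ as the alternating product over the two long exact sequences attached to $0\to\wh{T''}\to\wh T\to\wh{T'}\to0$ and to $0\to T'\to T\to T''\to0$; the $H^0$-terms of the first sequence reproduce $q(\hat\iota^\Gamma)$ and the local--global comparison in the second reproduces $q(\mu)$, giving the same answer.
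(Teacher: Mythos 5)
Your proposal is correct and takes essentially the same approach as the paper: the paper's entire proof of this theorem is a citation to Ono \cite[Section 4.3 and Theorem 4.2.1]{Ono-Tamagawa-of-tori-Ann}, and your argument reconstructs the skeleton of exactly that computation (additivity of characters and $a_1$, multiplicativity of $\rho$, identification of the two finite discrepancies with $\vert\coker \hat{\iota}^\Gamma\vert$ and $q(\mu)$) before explicitly deferring the measure bookkeeping to the same reference. One immaterial blemish, since both you and the paper ultimately rest on Ono: $\hat{\iota}^\Gamma$ is typically \emph{not} injective (its kernel is $\wh{N}((\wh{T''})^\Gamma)$, of positive rank in the paper's applications), so the equality $\tau(E)=q(\hat{\iota}^\Gamma)q(\mu)$ must be read as notation for $\vert\coker \hat{\iota}^\Gamma\vert\cdot q(\mu)$, which is precisely what you derive.
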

\begin{proof}
See \cite[Section 4.3 and Theorem 4.2.1]{Ono-Tamagawa-of-tori-Ann}. \qed
\end{proof}

Now we let
$$T=T^{K, \Q},\ \ \ T'=T^K_1,\ \ \ \mathrm{and}\ \ \ T''=\Gm.$$
Since $x^2N(x)^{-1}$ is of norm $1$, the map $\lambda$ defined by
\begin{equation}\label{the definition of the lambda map}
\lambda: T \rightarrow T' \X T'',\ \ \ x\mapsto (x^2N(x)^{-1},N(x))
\end{equation}
is an isogeny.
Applying Theorem \ref{thm1.2} to this $\lambda$ and Theorem \ref{theorem tau (E)} to the exact sequence $(\ref{diagram for exact sequence of T^K})$ together, we have
\begin{align}
\nonumber\frac{h(T)}{h({T'\times T''})}
=&\tau(E)^{-1}\cdot \frac{q(\lambda_\infty)}{q(\lambda_{\Z})
q(\hat{\lambda}^\Gamma)} \prod_p q(\lambda_p^c)\\
 =& \vert\coker\ \hat{\iota}^\Gamma\vert^{-1}\cdot  \left(\frac{[T''(\A): N(T(\A))\cdot T''(\Q)]}{[N(T(\A))\cap T''(\Q): N(T(\Q))]}\right) ^{-1} \\ 
\nonumber & \cdot\frac{q(\lambda_\infty)}{q(\lambda_{\Z})q(\hat{\lambda}^\Gamma)}\cdot \prod_{p<\infty} q(\lambda_p^c). 
\end{align}
As $h(\Gm)=1$, we obtain
\begin{equation}\label{h_T/h_{TxT'}}
\begin{split}
  \frac{h(T^{K,\Q})}{h({T^K_1})}=&  \vert\coker\ \hat{\iota}^\Gamma\vert^{-1}\cdot  \frac{[N(T^{K,\Q}(\A))\cap \Gm(\Q): N(T^{K,\Q}(\Q))]}{[\Gm(\A): N(T^{K,\Q}(\A))\cdot \Gm(\Q)]} \\ & \cdot\frac{q(\lambda_\infty)}{q(\lambda_{\Z})q(\hat{\lambda}^\Gamma)}\cdot \prod_{p<\infty} q(\lambda_p^c). 
\end{split}
\end{equation}


We shall determine each term in \eqref{h_T/h_{TxT'}}.

\subsection{Calculation of cokernel} 

\begin{lemma}\label{4.3}
The cardinality of $\coker \hat{\iota}^\Gamma$ is $1$.
\end{lemma}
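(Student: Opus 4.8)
The plan is to dualize the exact sequence $(E)$ into a short exact sequence of Galois character lattices and then extract $\coker \hat\iota^\Gamma$ from the associated long exact sequence in Galois cohomology. Recall that $T'=T^K_1$, $T=T^{K,\Q}$ and $T''=\Gm$, so $\hat\iota^\Gamma$ is the map on $\Gamma$-invariants induced by the inclusion $\iota:T^K_1\hookrightarrow T^{K,\Q}$.

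First I would apply the contravariant character functor $T\mapsto \wh T=\Hom_{\Qbar}(T,\Gm)$ to the bottom row of $(\ref{diagram for exact sequence of T^K})$. Since this functor is an (exact) anti-equivalence between algebraic tori over $\Q$ and finitely generated free $\Z$-modules equipped with a continuous $\Gamma$-action, it yields a short exact sequence of such modules
\begin{equation*}
0\longrightarrow \wh{T''}\longrightarrow \wh T\xrightarrow{\ \hat\iota\ } \wh{T'}\longrightarrow 0.
\end{equation*}
Because $T''=\Gm$ is already split over $\Q$, its character lattice $\wh{T''}=\wh{\Gm}\cong \Z$ carries the \emph{trivial} $\Gamma$-action; this identification of the kernel module is the one structural point that must be verified.

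Next I would pass to the long exact sequence in $\Gamma$-cohomology attached to this short exact sequence:
\begin{equation*}
0\longrightarrow (\wh{T''})^\Gamma\longrightarrow (\wh T)^\Gamma\xrightarrow{\ \hat\iota^\Gamma\ } (\wh{T'})^\Gamma\xrightarrow{\ \delta\ } H^1(\Gamma,\wh{T''})\longrightarrow\cdots .
\end{equation*}
Exactness at $(\wh{T'})^\Gamma$ shows that the connecting map $\delta$ induces an injection $\coker \hat\iota^\Gamma\hookrightarrow H^1(\Gamma,\wh{T''})=H^1(\Gamma,\Z)$. It therefore suffices to show that this $H^1$ vanishes. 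Since $\Gamma$ acts trivially on $\Z$, one has $H^1(\Gamma,\Z)=\Hom_{\mathrm{cont}}(\Gamma,\Z)$; as $\Gamma$ is profinite, every continuous homomorphism to the discrete torsion-free group $\Z$ has open, hence finite-index, kernel and thus trivial image, so $H^1(\Gamma,\Z)=0$. Consequently $\coker \hat\iota^\Gamma=0$ and $|\coker \hat\iota^\Gamma|=1$.

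There is no genuine obstacle here: the argument is a routine cohomological computation. The only things needing care are the exactness of the dualized sequence (equivalently, that $\iota$ and $N$ correspond under the tori–lattice anti-equivalence to the displayed maps) and the observation that the relevant first cohomology is of the trivial module $\Z$, which vanishes for the elementary profiniteness reason above.
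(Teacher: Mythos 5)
Your proof is correct, but it is not the route the paper's own proof takes --- it is, however, exactly the alternative argument the authors flag in the remark immediately following their proof (``Lemma 4.3 also follows from $H^1(\Gamma,\wh{\bbG}_m)=\Hom(\Gamma,\Z)=1$''). The paper instead proves the stronger statement $(\wh{T^K_1})^\Gamma=0$: it dualizes the \emph{top} row of the diagram, writes the character lattices explicitly as permutation modules $\wh{T^{K_i}}=\Z[\Gamma_\Q/\Gamma_{K_i}]$ and $\wh{T^{K^+_i}}=\Z[\Gamma_\Q/\Gamma_{K^+_i}]$, checks that $\wh{N_i}(\chi_i)=\chi_i+\bar\chi_i$ induces an isomorphism $(\wh{T^{K^+_i}})^\Gamma\isoto(\wh{T^{K_i}})^\Gamma$, and then concludes that $(\wh{T^{K_i}_1})^\Gamma$ is torsion (by tensoring with $\Q$) and hence zero, being a submodule of a free $\Z$-module. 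Your cohomological argument is shorter and more conceptual: you only need exactness of the character functor, the connecting-map injection $\coker\hat\iota^\Gamma\hookrightarrow H^1(\Gamma,\Z)$, and the elementary vanishing of continuous homomorphisms from a profinite group to discrete $\Z$. What the paper's longer computation buys is the intermediate fact $(\wh{T^K_1})^\Gamma=0$ itself, which is reused verbatim in the proof of Lemma~\ref{4.8} to show $q(\hat\lambda^\Gamma)=1$; your argument, while sufficient for Lemma~\ref{4.3}, does not supply that vanishing, so one would still need the paper's computation (or a separate argument) later.
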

\begin{proof}
Taking the character groups of (\ref{diagram for exact sequence of T^K}), we have
\begin{equation}\label{eq: Lemma 4.3}  
    \begin{tikzcd}      
    0\arrow{r} & (\wh{\Gm})^\Gamma \arrow{r}{\wh{N}^\Gamma} & (\wh{T^{K,\Q}})^\Gamma\arrow{r}{\hat{\iota}^\Gamma} & (\wh{T^K_1})^\Gamma\arrow{r} & \coker \hat{\iota}^\Gamma \arrow{r} & 0. 
\end{tikzcd}
\end{equation}
Thus, it suffices to show $(\wh{T^K_1})^\Gamma=0$.
Again from $(\ref{diagram for exact sequence of T^K})$, we have
\begin{equation*}
(*)\quad  \begin{tikzcd}    
    0\arrow{r}& (\wh{T^{K^+}})^\Gamma \arrow{r}{\wh{N_{}}}&  (\wh{T^K})^\Gamma\arrow{r}& (\wh{T^K_1})^\Gamma .         
\end{tikzcd}   
\end{equation*}
Recall that $T^K=\prod_{i=1}^r T^{K_i}$ with $T^{K_i}=R_{K_i/\Q}\G_{{\rm m},K_i}$. 
Let $\Gamma_F:=\Gal(\Qbar/F)$.
Also, note that
\begin{align*}
&\wh{T^{K_i}}=\Z[\Gamma_\Q/\Gamma_{K_i}],\ \ \ \ \ \ \wh{T^{K^+_i}}=\Z[\Gamma_\Q/\Gamma_{K_i^+}],\\
&(\wh{T^{K_i}})^\Gamma=\Z[\sum_{\sigma \in \Gamma_\Q/\Gamma_{K_i}}\sigma],\ \ \ \ \ \
(\wh{T^{K^+_i}})^\Gamma=\Z[\sum_{\sigma \in \Gamma_\Q/\Gamma_{K^+_i}}\sigma].
\end{align*}
The norm map $N_i$ sends $x$ to $x \bar{x}$, where $x \in K_i$ and $\bar{x}$ is the complex conjugate of $x$. Therefore, ${\wh{N_i}}(\chi_i)=\chi_i+ \bar{\chi_i}$
for $\chi_i \in \wh{T^{K_i^+}}$. This shows 
$(\wh{T^{K^+_i}})^\Gamma \stackrel{\sim}{\longrightarrow}(\wh{T^{K_i}})^\Gamma$.
Note that the left exact sequence ($*$)$\otimes_\Z \Q$ is also right exact. 
Thus, $(\wh{T^{K_i}_1})^\Gamma\otimes \Q=0$ and $(\wh{T^{K_i}_1})^\Gamma$ is a torsion $\Z$-module. It follows that $(\wh{T^{K_i}_1})^\Gamma=0$, because it is a submodule of a finite free $\Z$-module  $(\wh{T^{K_i}_1})$.
It follows that 
$|\coker \hat{\iota}^\Gamma|=1$. \qed

\end{proof}

We remark that
Lemma \ref{4.3} also follows from
$H^1(\Gamma,\wh{\mathbb{G}}_m)=\Hom(\Gamma,\Z)=1$.
The proof will be also used in Lemma~\ref{4.8}.

\subsection{Calculation of indices of rational points}

Recall that  for any commutative $\Q$-algebra $R$, the groups of  $R$-points of $T^K$ and $T^{K,\Q}$
  are 
$$T^K(R)=(K \otimes R)^\times \quad \text{and} \quad
  T^{K,\Q}(R)=\left\{a\in(K\otimes R)^\times: N(a)\in
    R^\times\right\},$$ respectively.
For $v \in V_K$, the union of the sets $V_{K_i}$ of places of $K^+_i$ for $1\leq i\leq r$, we put $K_v=(K_i)_v$ if $v\in V_{K_i}$.
 For any prime $p$, let $S_p$ be the set of places of $K^+$ lying over $p$. We have
\begin{equation}\label{TQp}
  T^{K,\Q}(\Qp)=\{((x_v)_{v},x_p)\in \prod_{v\in S_p} K_v^\times \times \Qp^\times\, \mid N(x_v)=x_p\ \forall\, v\},
\end{equation}
and 
\begin{equation}\label{TZp}
T^{K,\Q}(\Zp)=\{((x_v)_{v},x_p)\in \prod_{v\in S_p} O_{K_v}^\times \times \Zp^\times\, \mid N(x_v)=x_p\ \forall\, v\}.
\end{equation}
Note that $x_p$ is uniquely determined by $(x_v)_v$ and we may also represent an element $x$ in $T^{K,\Q}(\Qp)$ by $(x_v)_v$. 
\begin{lemma}\label{2.1}
We have $N(T^{K,\Q}(\Q_p))\cap \Z_p^\times=N(T^{K,\Q}(\Z_p))$ for every prime number $p$. 
\end{lemma}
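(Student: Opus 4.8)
The plan is to reduce the identity at $p$ to a purely local statement at each place $v\in S_p$ and then settle that local statement by a valuation computation. First I would exploit the explicit descriptions \eqref{TQp} and \eqref{TZp}. Since the composite $T^{K,\Q}\xrightarrow{N}\Gm$ sends $x=((x_v)_v,x_p)$ to its $\Gm$-component $x_p$, a unit $u\in\Q_p^\times$ lies in $N(T^{K,\Q}(\Q_p))$ exactly when, for every $v\in S_p$, the image of $u$ under the diagonal embedding $\Q_p^\times\hookrightarrow (K_v^+)^\times$ lies in $N_v(K_v^\times)$, where $K_v^+$ denotes the completion of $K^+$ at $v$ and $N_v\colon K_v^\times\to (K_v^+)^\times$ is the induced local norm. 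Likewise $u\in\Z_p^\times$ lies in $N(T^{K,\Q}(\Z_p))$ exactly when $u\in N_v(O_{K_v}^\times)$ for every $v\in S_p$. The inclusion $\supseteq$ is then immediate from $O_{K_v}^\times\subseteq K_v^\times$, and the whole content is the reverse inclusion: a $p$-adic unit that is a local norm from $K_v^\times$ at every $v$ is in fact a norm of units at every $v$.

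The key is the local claim that, for each $v\in S_p$,
\[ N_v(O_{K_v}^\times)=N_v(K_v^\times)\cap (O_{K_v^+})^\times. \]
I would prove this using the dichotomy coming from the quadratic extension $K_i/K_i^+$: either $v$ is non-split, so $K_v$ is a quadratic field extension of $K_v^+$, or $v$ splits, so $K_v\simeq K_v^+\times K_v^+$ with $N_v(a,b)=ab$. In the non-split case the normalized valuations satisfy $v_{K_v^+}(N_v(y))=f\cdot w(y)$, with $w$ the valuation on $K_v$ and $f\ge 1$ the residue degree; hence $N_v(y)\in (O_{K_v^+})^\times$ forces $w(y)=0$, i.e. $y\in O_{K_v}^\times$, so any norm preimage of a unit is automatically integral. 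In the split case $N_v$ is already surjective on units (take $(u,1)$), which gives the claim directly.

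Finally, since $v\mid p$ gives $\Z_p^\times\subseteq (O_{K_v^+})^\times$, intersecting the local claim with $\Z_p^\times$ yields $\Z_p^\times\cap N_v(K_v^\times)=\Z_p^\times\cap N_v(O_{K_v}^\times)$ for each $v$, and intersecting over $v\in S_p$ gives the asserted equality. I do not expect a genuine obstacle: the only subtle point is the split place, where the naive norm preimage of a unit need not be integral, so one argues via surjectivity of the norm on units rather than by the valuation bound used in the non-split case. One must also keep careful track of the diagonal embedding $\Q_p^\times\hookrightarrow (K_v^+)^\times$, so that the global membership condition genuinely decouples into the separate local conditions.
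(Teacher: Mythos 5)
Your proof is correct and follows essentially the same route as the paper: both decouple the condition place-by-place over $p$, use a valuation argument to show that at an inert or ramified place any norm preimage of a unit is automatically a unit, and rechoose the preimage at split places. The only cosmetic difference is that you package this as the subgroup identity $N_v(O_{K_v}^\times)=N_v(K_v^\times)\cap (O_{K_v^+})^\times$, whereas the paper modifies a given element $(x_v)_v$ at split places by powers of a uniformizer to make it integral while preserving its norm.
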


\begin{proof}
Clearly, $N(T^{K,\Q}(\Q_p))\cap \Z_p^\times\supset N(T^{K,\Q}(\Z_p))$. We must prove the other inclusion $N(T^{K,\Q}(\Q_p))\cap \Z_p^\times\subset N(T^{K,\Q}(\Z_p))$. 
Suppose $x=(x_v)\in T^{K,\Q}(\Qp)\cap N^{-1}(\Zp^\times)$. We will find an element $x'=(x'_v)\in T^{K,\Q}(\Zp)$ such that $N(x)=N(x')$. 

Suppose $v$ is inert or ramified in $K$ and let $w$ be the unique place of $K$ over $v$. 
Then $x_v\in K_v=K_w$ and $N(x_v)\in \Zp^\times$.
We have $$\ord_w N(x_v)=\ord_w (x_v)+\ord_w {\bar x}_v=2 \ord_w x_v=0$$ and hence $x_v\in O_{K_w}^\times$. 

Suppose $v=w \bar w$ splits in $K$. Then $x_v=(x_w, x_{\bar w})\in K_v=K_w\times K_{\bar w}=K^+_v\times K^+_v$ and $N(x_v)=x_w x_{\bar w}\in \Zp^\times$.
We have $\ord_v x_w=-\ord_v x_{\bar w}=a_v$ for some $a_v\in \Z$. Put $x_v':=(\varpi_v^{-a_v} x_w, \varpi_v^{a_v} x_{\bar w})$, where $\varpi_v$ is a uniformizer of $K^+_v$. 
Clearly, $x'_v\in O_{K_v}^\times$ and $N(x_v)=N(x'_v)$.

Now suppose $y\in N(T^{K,\Q}(\Qp))\cap \Z_p^\times$ and $N(x)=y$ for some $x\in T^{K, \Q}(\Qp)$.
Set $x':=(x'_v)$ with $x'_v= x_v$ if $v$ is inert or ramified in $K$, and $x'_v$ as above if $v$ splits in $K$.  Then $y=N(x)=N(x')\in N(T^{K,\Q}(\Z_p))$. This proves $N(T^{K,\Q}(\Q_p))\cap \Z_p^\times\subset N(T^{K,\Q}(\Z_p))$.\qed
\end{proof}

\begin{lemma}\label{tau(E)=N} We have
$[N(T^{K,\Q}(\A))\cap \Gm(\Q): N(T^{K,\Q}(\Q))]=1$ and 
$$
\tau(E)=[\Gm(\A): N(T^{K,\Q}(\A))\cdot \Gm(\Q)].
$$
 
\end{lemma}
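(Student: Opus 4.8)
The plan is to observe that the second identity is a formal consequence of the first. Applying Theorem~\ref{theorem tau (E)} to the exact sequence $(E)$ with $T=T^{K,\Q}$, $T'=T^K_1$ and $T''=\Gm$ (the lower row of \eqref{diagram for exact sequence of T^K}) gives
$$\tau(E)=\vert\coker \hat{\iota}^\Gamma\vert\cdot\frac{[\Gm(\A): N(T^{K,\Q}(\A))\cdot \Gm(\Q)]}{[N(T^{K,\Q}(\A))\cap \Gm(\Q): N(T^{K,\Q}(\Q))]},$$
and by Lemma~\ref{4.3} the factor $\vert\coker \hat{\iota}^\Gamma\vert$ equals $1$. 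Hence, once the index $[N(T^{K,\Q}(\A))\cap \Gm(\Q): N(T^{K,\Q}(\Q))]$ is shown to be $1$, the asserted formula for $\tau(E)$ drops out immediately. So the real work is the first equality, which is a Hasse-principle statement: a rational number that is an adelic norm from $T^{K,\Q}$ should already be a rational norm.

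First I would unwind both groups using the product decomposition $K=\prod_i K_i$, $K^+=\prod_i K_i^+$ and the diagonal embedding $\Gm\hookrightarrow T^{K^+}$. From $T^{K,\Q}(R)=\{a\in (K\otimes R)^\times: N(a)\in R^\times\}$ one sees that $y\in\Q^\times$ lies in $N(T^{K,\Q}(\Q))$ precisely when, for every $i$, the number $y$ (viewed in $(K_i^+)^\times$) is a global norm $N_{K_i/K_i^+}(a_i)$ with $a_i\in K_i^\times$; and $y\in N(T^{K,\Q}(\A))\cap\Q^\times$ precisely when, for every $i$, the principal idele attached to $y$ lies in the image of the idelic norm $N_i\colon\A_{K_i}^\times\to\A_{K_i^+}^\times$. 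Using the standard description of this image as the restricted product of the local norm groups, the adelic condition says exactly that $y$ is a local norm from $K_{i,w}$ at every place $u$ of $K_i^+$ (with $w\mid u$), for each $i$.

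The inclusion $N(T^{K,\Q}(\Q))\subseteq N(T^{K,\Q}(\A))\cap\Q^\times$ is clear. For the reverse inclusion I would invoke the Hasse norm theorem for each CM extension $K_i/K_i^+$: such an extension is quadratic, hence cyclic, so an element of $(K_i^+)^\times$ which is a local norm at every place is automatically a global norm. Applying this to $y$ for each $i$ produces $a_i\in K_i^\times$ with $N_i(a_i)=y$; assembling $a=(a_i)\in K^\times=\prod_i K_i^\times$ then yields a point of $T^{K,\Q}(\Q)$ with $N(a)=y$, so $y\in N(T^{K,\Q}(\Q))$. This establishes the index equality and, combined with the first paragraph, both assertions.

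I do not anticipate a serious obstacle. The only points needing care are the bookkeeping of the diagonal embedding and the product over the $K_i$, and the precise identification of the idelic-norm image with the ``local norm at every place'' condition (which already accounts for positivity at the archimedean places, since each $K_{i,w}/K_{i,u}^+$ is $\C/\R$ there). The decisive input is the cyclicity of each $K_i/K_i^+$; for a non-cyclic extension the Hasse principle for norms can fail, but for these degree-two extensions the obstruction vanishes and the argument goes through cleanly.
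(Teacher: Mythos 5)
Your proposal is correct and follows essentially the same route as the paper: the second identity is deduced formally from Theorem~\ref{theorem tau (E)} and Lemma~\ref{4.3}, and the first is reduced to the Hasse norm principle for the quadratic (hence cyclic) extensions $K_i/K_i^+$. The paper packages that last step as a single citation of Ono's norm theorem, namely $N(\A_K^\times)\cap (K^+)^\times=N(K^\times)$ applied to the CM algebra extension $K/K^+$, whereas you spell it out componentwise; the content is identical.
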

\begin{proof}
Since $T^{K,\Q}(\A)=\{x\in \A_K^\X \mid N(x)\in\A^\X\}$, we have $N(T^{K,\Q}(\A))=N(\A_K^\times)\cap \A^\times$.
Applying the norm theorem \cite[Theorem 6.1.1]{Ono-Tamagawa-of-tori-Ann}, we have $N(\A_K^\times)\cap (K^+)^\X=N(K^\X)$.
Hence
$$
N(T^{K,\Q}(\A))\cap \Q^\X=N(\A_K^\times)\cap \Q^\times=N(K^\X)\cap \Q^\times=N(T^{K,\Q}(\Q)).
$$
This proves the first statement. The second statement then follows from Lemma~\ref{4.3} and Theorem~\ref{theorem tau (E)}. \qed
\end{proof}

\subsection{Calculation of $q$-symbols}
We are going to evaluate each $q$-symbol
in $(\ref{h_T/h_{TxT'}})$.
Recall the isogeny in $\eqref{the definition of the lambda map}$, namely
\begin{equation}\label{lambda map}
\lambda: T^{K,\Q} \rightarrow T^K_1 \X \Gm,\ x \mapsto (x^2N(x)^{-1}, N(x)).
\end{equation}
Note that $\ker \lambda=\{x\in T^{K,\Q}\mid N(x)=1,\ x^2=1\}=\{x\in T^K_1|\ x^2=1\}$. 
Hence, we have $\ker \lambda= \ker \Sq_{T^K_1},$ where
$\Sq_{T^K_1}: T^K_1 \rightarrow T^K_1,\  x \mapsto x^2$ is the squared map.  

\begin{lemma}\label{4.6}
Suppose 
$d=[K^+:\Q]$.
The $q$-symbol of $\lambda_\infty$ is equal to $2^{-d+1}$.
\end{lemma}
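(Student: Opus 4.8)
The plan is to make everything explicit at the unique archimedean place. First I would identify the real points of the three tori using the CM structure. Since $K$ is a CM algebra with maximal totally real subalgebra $K^+$ of degree $d=[K^+:\Q]$, we have $K\otimes_\Q\R\cong\C^d$ and $K^+\otimes_\Q\R\cong\R^d$, and under this identification $N=N_{K/K^+}$ acts on each factor by $z\mapsto z\bar z=|z|^2$. Consequently $\Gm(\R)=\R^\times$, the norm-one torus $T^K_1(\R)=\{(z_i)\in(\C^\times)^d:\ |z_i|=1\ \forall i\}$ is a product of $d$ unit circles $S^1$, and, since $T^{K,\Q}$ is the preimage of the diagonal $\Gm\subset T^{K^+}$, we get $T^{K,\Q}(\R)=\{(z_i)\in(\C^\times)^d:\ |z_1|=\cdots=|z_d|\}$, with $N$ sending such an $x$ to the common value $|z_i|^2\in\R_+$.

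Next I would write $\lambda_\infty$ in these coordinates. For $x=(z_i)$ with common modulus $\rho:=|z_i|>0$ one has $N(x)=\rho^2$ and $x^2N(x)^{-1}=(z_i^2/\rho^2)_i$, each coordinate of which has modulus $1$; hence $\lambda_\infty(x)=\bigl((z_i^2/\rho^2)_i,\ \rho^2\bigr)$ indeed lands in $T^K_1(\R)\times\R^\times$.

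Then I would compute the kernel and the image separately. The kernel was already identified just before the lemma as $\ker\lambda=\{x\in T^K_1:\ x^2=1\}$; on real points this forces each $z_i=\pm1$, so $\ker\lambda_\infty=\{\pm1\}^d$ has order $2^d$. For the image, the angular part $(z_i^2/\rho^2)_i$ and the radial part $\rho^2$ vary independently: as the arguments of the $z_i$ range over $\R$ the first component sweeps out all of $(S^1)^d$, while $\rho^2$ sweeps out exactly $\R_+$. Thus the image equals $T^K_1(\R)\times\R_+$, and $\coker\lambda_\infty\cong\R^\times/\R_+\cong\{\pm1\}$ has order $2$. Combining the two counts gives $q(\lambda_\infty)=|\coker\lambda_\infty|/|\ker\lambda_\infty|=2/2^d=2^{-d+1}$.

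I expect no serious obstacle here, as this is a direct archimedean computation. The only point that needs care is reading off the real points correctly from the CM structure and observing that the radial factor realizes precisely $\R_+$ rather than all of $\R^\times$; it is exactly this index-two discrepancy that produces the single surviving factor of $2$ in the cokernel and hence the shape $2^{-d+1}$ of the answer.
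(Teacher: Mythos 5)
Your proof is correct and takes essentially the same route as the paper: both identify $T^{K,\Q}(\R)$, $T^K_1(\R)=(S^1)^d$ explicitly via the CM structure, compute $\ker\lambda_\infty=\{\pm1\}^d$, and observe that the image is $(S^1)^d\times\R^\times_+$ since the norm of a complex number is positive, giving $q(\lambda_\infty)=2/2^d=2^{-d+1}$. The only cosmetic difference is that you verify surjectivity onto $(S^1)^d\times\R_+$ by explicit choice of arguments and moduli, while the paper invokes connectedness of $N(\C^\times)=\R^\times_+$; the content is the same.
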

\begin{proof}
Since $K=\prod_{i=1}^r K_i$ is a CM algebra, we have $T^K(\R)=(K\otimes_\Q \R)^\X=(\C^d)^\X$. According to $(\ref{lambda map})$, we have
\begin{align*}
    T^{K,\Q}(\R)&=\{ (x_i) \in (\C^\X)^d|\ N(x_i)=N(x_j) \in \R,\ \forall i,\ j \}, \\
    T^K_1(\R)&=\{(x_i)\in (\C^\X)^d|\ x_i\bar{x_i}=1 \text{ for all } i\}=(S^1)^d, \\
    \ker \lambda_\infty&=\ker \Sq_{T^K_1(\R)}
=\{\pm 1\}^d,
\end{align*}    and hence the exact sequence \begin{center}
\begin{tikzcd}
    0\arrow{r} & \{\pm1\}^d \arrow{r} & T^{K,\Q}(\R)\arrow{r}{\lambda_\infty} & (S^1)^d\X \R^\X.         
\end{tikzcd}
\end{center}
Since $N(\C^\X)=\R^\X_{+}$ is connected, the image of $\lambda_\infty$ is $(S^1)^d\X \R^\X_+$ and $\vert\coker \lambda_\infty\vert=\vert\R^\X / \R^\X_{+}\vert=2$.
Therefore,
$q(\lambda_\infty)=2/\vert\{\pm 1\}^d\vert=2^{-d+1}.$
\qed
\end{proof}

\begin{lemma}\label{4.7}
The $q$-symbol of $\lambda_{\Z}$ is equal to $2$.
\end{lemma}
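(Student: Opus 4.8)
The plan is to compute the kernel and cokernel of the map $\lambda_\Z \colon T^{K,\Q}(\Z) \to (T^K_1 \times \Gm)(\Z)$ directly, just as was done for $\lambda_\infty$ in Lemma~\ref{4.6}, and to read off $q(\lambda_\Z)$ from their cardinalities. First I would identify the integral points explicitly. By the description $T^{K,\Q}(R) = \{a \in (K\otimes R)^\times : N(a) \in R^\times\}$ together with $T^{K,\Q}(\Z) = T^{K,\Q}(\Q) \cap [T^{K,\Q}(\R) \times \prod_p T^{K,\Q}(\Z_p)]$, the group $T^{K,\Q}(\Z)$ consists of those $a \in O_K^\times$ (units in the ring of integers of the CM algebra $K$) whose norm $N(a) = a\bar a$ lies in $\Z^\times = \{\pm 1\}$; since $a\bar a$ is totally positive, in fact $N(a) = 1$. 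Thus $T^{K,\Q}(\Z) = \{a \in O_K^\times : a\bar a = 1\}$, which is exactly the group of roots of unity $\mu_K$ in $O_K^\times$ by a standard Kronecker-type argument (a unit all of whose archimedean absolute values equal $1$ is a root of unity). Likewise $T^K_1(\Z) = \mu_K$ and $\Gm(\Z) = \{\pm 1\}$.

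Next I would analyze $\lambda_\Z$ through these identifications. Since $\lambda(x) = (x^2 N(x)^{-1}, N(x))$ and every $x \in T^{K,\Q}(\Z)$ already satisfies $N(x) = 1$, the map becomes $x \mapsto (x^2, 1)$, so $\lambda_\Z$ factors as the squaring map $\Sq \colon \mu_K \to \mu_K$ into the first factor (its image landing in $T^K_1(\Z) \times \{1\}$). Hence $\ker \lambda_\Z = \mu_K[2] = \{\pm 1\}$ (writing $\mu_K = \prod_i \mu_{K_i}$, the $2$-torsion is $\{\pm1\}^r$, but I should verify whether the diagonal norm-one condition coupling the components forces a single sign; in the simplest reading $-1$ is the unique nontrivial $2$-torsion element common to the structure, giving $|\ker \lambda_\Z| = 2$). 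For the cokernel, I would compute $|\coker \lambda_\Z|$ as the index of $\Sq(\mu_K)$ inside $T^K_1(\Z) \times \Gm(\Z) = \mu_K \times \{\pm1\}$; the squaring map on a finite cyclic group of even order has image of index $2$, and one must account for the second $\Gm$-factor $\{\pm1\}$ which lies entirely outside the image.

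The resulting count should give $q(\lambda_\Z) = |\coker \lambda_\Z| / |\ker \lambda_\Z| = 2$, matching the claim. The main obstacle I anticipate is bookkeeping the torsion subgroup $\mu_K$ correctly across the several CM components $K = \prod_i K_i$ and tracking precisely how the norm-one condition $N(x)=1$ interacts with the squaring map on each factor, so that the factors of $2$ from $\ker \Sq$, from $\coker \Sq$, and from the separate $\Gm(\Z) = \{\pm 1\}$ do not get miscounted. In particular I would be careful that the isomorphism $\ker \lambda_\Z = \ker \Sq_{T^K_1}(\Z)$ observed just before Lemma~\ref{4.6} (namely $\ker \lambda = \ker \Sq_{T^K_1}$) is applied at the level of $\Z$-points, and cross-check the final cardinality against the real-place computation in Lemma~\ref{4.6} to ensure the conventions for $\{\pm1\}$-factors are consistent.
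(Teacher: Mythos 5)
Your identification of the integral points is exactly the paper's: $T^{K,\Q}(\Z)=T^K_1(\Z)=\mu_K$ (any unit with $x\bar x=1$ is a root of unity, and the norm of a root of unity is automatically $1$) and $(T^K_1\times\Gm)(\Z)=\mu_K\times\{\pm 1\}$. From there, however, the paper finishes in one line: since both groups are \emph{finite}, the fact recorded when $q$-symbols were introduced --- $q(\alpha)=|G'|/|G|$ whenever $G$ and $G'$ are finite --- gives
\[
q(\lambda_\Z)=\frac{|\mu_K\times\{\pm 1\}|}{|\mu_K|}=2,
\]
with no need to analyze the map $\lambda_\Z$ at all. Your plan to compute kernel and cokernel explicitly can be made to work, but it is precisely where your details go wrong. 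There is no ``diagonal norm-one condition coupling the components'': for $x=(x_i)\in\mu_K=\prod_{i=1}^r\mu_{K_i}$ one has $N(x_i)=x_i\bar x_i=1$ in every component automatically, so the signs in different components are independent and $\ker\lambda_\Z=\{\pm 1\}^r$, of order $2^r$, not $2$ (each $\mu_{K_i}$ is cyclic, so its $2$-torsion is $\{\pm 1\}$, but there are $r$ independent factors). Likewise $\mu_K$ is not cyclic when $r>1$, and the image of squaring has index $[\mu_K:\mu_K^2]=2^r$, so $|\coker\lambda_\Z|=2^r\cdot 2=2^{r+1}$, the extra $2$ coming from the untouched factor $\Gm(\Z)=\{\pm 1\}$. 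Your two misstatements cancel --- necessarily so, since for any endomorphism of a finite abelian group the kernel and cokernel have equal order, so the squaring part contributes $1$ to $q(\lambda_\Z)$ and the entire answer $2$ comes from the $\Gm(\Z)$ factor missed by the image. So your final value is correct, but the hedge you left about the kernel resolves the opposite way from your ``simplest reading,'' and the finiteness shortcut the paper uses makes all of this bookkeeping unnecessary.
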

\begin{proof}
It is clear that $N(x)=1$ for $x\in T^{K,\Q}(\Z)$. Note that any element $x\in O_K^\times$ with $x\bar x=1$ is a root of unity.
Then
\[ T^{K,\Q}(\Z)=T^K_1(\Z)=\prod_{i=1}^r \mu_{K_i}=:\mu_K,\] 
where $\mu_{K_i}$ is the group of roots of unity in $K_i$.
Since $T^{K,\Q}(\Z)$ and $(T^K_1\X \Gm) (\Z)$ are finite, we have 
$$
q(\lambda_\Z)=\frac{\vert( T^K_1\times \mathbb{G}_m)(\Z)\vert}{\vert
  T^{K,\Q}(\Z)\vert}=\frac{\vert\mu_K\times\left\{\pm1\right\}\vert}
{\vert\mu_K\vert}=2. \text{\qed}
$$
\end{proof}

\begin{lemma}\label{4.8}
The $q$-symbol of $\hat{\lambda}^\Gamma$ is equal  to $1$.
\end{lemma}

\begin{proof}
In the proof of Lemma~\ref{4.3} we have showed that $(\widehat{T^K_1})^\Gamma=0$.
Therefore, the map $\hat{\lambda}:(\widehat{\Gm})^\Gamma\X
(\wh{T^K_1})^\Gamma\rightarrow(\widehat{T^{K,\Q}})^\Gamma$ is just
given by
$\hat{N}:(\widehat{\Gm})^\Gamma\rightarrow(\widehat{T^{K,\Q}})^\Gamma.$
The map $\wh{N}$ is in fact an isomorphism from $\eqref{eq: Lemma
  4.3}$. Therefore, $q(\hat{\lambda}^\Gamma)=1$. \qed 
\end{proof}

\begin{lemma}\label{3.1}
Let $A \xrightarrow{\alpha} B \xrightarrow{\beta} C$ be group homomorphisms of abelian groups with finite $\ker \beta \alpha$ and $\coker \beta \alpha$. Then 
the cardinality of $\coker \beta$ is equal to
\begin{equation}
\begin{split}
[C:\im \beta]&=[C:\im \beta\alpha][\im \beta: \im \beta\alpha]^{-1}\\
&=[C: \im \beta\alpha][B:\im \alpha]^{-1}\cdot
[\ker\beta: \ker \beta\alpha \slash \ker \alpha].
\end{split}
\end{equation}
\end{lemma}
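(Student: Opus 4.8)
The plan is to establish the two displayed equalities separately, each by a short index computation, and the only delicate point is bookkeeping the finiteness so that every index that appears is meaningful. Throughout I would use that $\im\beta\alpha=\beta(\alpha(A))=\beta(\im\alpha)$ and that $\ker\alpha\subseteq\ker\beta\alpha$.

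For the first line I would work with the tower $\im\beta\alpha\subseteq\im\beta\subseteq C$. Since $[C:\im\beta\alpha]=|\coker\beta\alpha|$ is finite by hypothesis and $\im\beta\alpha\subseteq\im\beta\subseteq C$, both $[C:\im\beta]$ and $[\im\beta:\im\beta\alpha]$ are finite, and multiplicativity of indices gives $[C:\im\beta\alpha]=[C:\im\beta]\cdot[\im\beta:\im\beta\alpha]$. Rearranging produces $[C:\im\beta]=[C:\im\beta\alpha]\cdot[\im\beta:\im\beta\alpha]^{-1}$.

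For the second line it suffices to prove the index relation $[B:\im\alpha]=[\im\beta:\im\beta\alpha]\cdot[\ker\beta:\ker\beta\cap\im\alpha]$ and then to identify the last factor. I would observe that $\beta$ induces a surjection $\bar\beta\colon B/\im\alpha\twoheadrightarrow\im\beta/\im\beta\alpha$ (well defined because $\beta(\im\alpha)=\im\beta\alpha$, surjective because $\beta(B)=\im\beta$), whose kernel is $(\ker\beta+\im\alpha)/\im\alpha$; by the second isomorphism theorem this kernel is isomorphic to $\ker\beta/(\ker\beta\cap\im\alpha)$. Hence the short exact sequence
\[
0\to\ker\beta/(\ker\beta\cap\im\alpha)\to B/\im\alpha\xrightarrow{\bar\beta}\im\beta/\im\beta\alpha\to 0
\]
yields the claimed relation as an equality of indices. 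Finally I would identify $\ker\beta\cap\im\alpha$ with the image of $\ker\beta\alpha/\ker\alpha$: restricting $\alpha$ to $\ker\beta\alpha$ has image exactly $\ker\beta\cap\im\alpha$ and kernel $\ker\alpha$, which is precisely the meaning of the symbol $[\ker\beta:\ker\beta\alpha/\ker\alpha]$ in the statement. Substituting $[\im\beta:\im\beta\alpha]=[B:\im\alpha]\cdot[\ker\beta:\ker\beta\cap\im\alpha]^{-1}$ into the first line gives the second.

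The main obstacle, and the reason to argue through the exact sequence rather than by a naive cancellation, is that neither $[B:\im\alpha]$ nor $[\ker\beta:\ker\beta\cap\im\alpha]$ need be finite under the stated hypotheses: only $\ker\beta\alpha$ and $\coker\beta\alpha$ are assumed finite, so $\ker\beta\cap\im\alpha\cong\ker\beta\alpha/\ker\alpha$ is finite, yet $\ker\beta$ and hence $[\ker\beta:\ker\beta\cap\im\alpha]$ may be infinite. One therefore has to read the product $[B:\im\alpha]^{-1}\cdot[\ker\beta:\ker\beta\alpha/\ker\alpha]$ not as a ratio of two separately finite numbers but as the single finite quantity $[\im\beta:\im\beta\alpha]^{-1}$ supplied by the exact sequence above; phrasing the step via that sequence makes the cancellation legitimate and the whole identity a statement about finite indices.
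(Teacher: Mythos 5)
Your proof is correct and is essentially the paper's argument: the short exact sequence $0\to\ker\beta/(\ker\beta\cap\im\alpha)\to B/\im\alpha\xrightarrow{\bar\beta}\im\beta/\im\beta\alpha\to 0$ that you construct by hand (via the second isomorphism theorem) is exactly what the paper extracts by applying the snake lemma to the diagram of inclusions $\ker\beta\alpha/\ker\alpha\hookrightarrow\ker\beta$, $A/\ker\alpha\hookrightarrow B$, $A/\ker\beta\alpha\hookrightarrow B/\ker\beta$, and the identification $\alpha(\ker\beta\alpha)=\ker\beta\cap\im\alpha$ is the paper's left vertical arrow. Your added care about finiteness (only $[\im\beta:\im\beta\alpha]$ is guaranteed finite, not $[B:\im\alpha]$ or $[\ker\beta:\ker\beta\alpha/\ker\alpha]$ separately, so their ratio must be read off from the exact sequence) is a point the paper leaves implicit.
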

\begin{proof}
The first equality is obvious and
the second equality follows by applying the snake lemma to the following diagram
\begin{center}
\begin{tikzcd}
    0\arrow{r} & \ker \beta\alpha \slash \ker \ \alpha \arrow{r}\arrow[hook]{d} & A \slash \ker \alpha \arrow{r}\arrow[hook]{d}& A \slash \ker \beta\alpha \arrow{r}\arrow[hook]{d} & 0 \\
    0\arrow{r} & \ker \beta \arrow{r} & B \arrow{r} & B \slash \ker \beta \arrow{r} & 0. \quad \text{\qed}
\end{tikzcd} 
\end{center}
\end{proof}

 

\begin{lemma}\label{4.10}
We have
$$q(\lambda_p^c)=\left. \begin{cases} 
e_{T,p} & \text{if } p\neq 2;  \\
    e_{T,2}\cdot 2^d & \text{if } p=2,
\end{cases}\right. 
$$
where $e_{T,p}=[\Z_p^\X: N(T^{K,\Q}(\Z_p))]$ and $d=[K^+:\Q]$.
\end{lemma}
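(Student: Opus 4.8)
The plan is to compute the $q$-symbol of the map $\lambda_p^c=\lambda_{\Z_p}\colon T^{K,\Q}(\Z_p)\to (T^K_1\times\Gm)(\Z_p)$ by decomposing it through the squaring map and the norm, using Lemma~\ref{3.1} as the main bookkeeping device. Recall that $\lambda(x)=(x^2N(x)^{-1},N(x))$, so its kernel is $\ker\Sq_{T^K_1}$. First I would set $A=T^{K,\Q}(\Z_p)$, $B=(T^K_1\times\Gm)(\Z_p)$, and factor the analysis through the norm map. The key observation, already available from Lemma~\ref{2.1}, is that the image of $N$ on $T^{K,\Q}(\Z_p)$ is exactly $N(T^{K,\Q}(\Q_p))\cap\Z_p^\times$, so the cokernel of the $\Gm$-component of $\lambda_p^c$ contributes precisely the index $e_{T,p}=[\Z_p^\times:N(T^{K,\Q}(\Z_p))]$. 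The remaining task is to control the contribution of the first factor, namely the squaring map on $T^K_1(\Z_p)$.

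Concretely, I would compute $q(\lambda_p^c)=|\coker\lambda_p^c|/|\ker\lambda_p^c|$ by combining two pieces: the $\Gm$-factor gives the index $e_{T,p}$, and the $T^K_1$-factor gives the $q$-symbol of the squaring map $\Sq$ on $T^K_1(\Z_p)$. For $p$ odd, squaring is an automorphism of $O_{K_v}^\times$ modulo its finite torsion — more precisely, on the pro-$p$ part of the norm-one units squaring is bijective since $2$ is a $p$-adic unit, so the $2$-power discrepancy between kernel and cokernel cancels and the $\Sq$-factor contributes $q(\Sq_{T^K_1(\Z_p)})=1$. For $p=2$, squaring is no longer bijective on the units: the index $[\,T^K_1(\Z_2):\Sq(T^K_1(\Z_2))\,]$ and the size of the $2$-torsion together produce an extra factor, and tracking the local structure of $T^K_1(\Z_2)=\prod_{v|2}\{x\in O_{K_v}^\times:N(x)=1\}$ yields the factor $2^d$ with $d=[K^+:\Q]$. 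This is where one uses that $T^K_1(\Z_2)$ is, up to finite groups, a $\Z_2$-module of the expected rank, so that $q(\Sq)=2^d$.

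I would organize the bookkeeping precisely via Lemma~\ref{3.1} applied to the composite $T^{K,\Q}(\Z_p)\xrightarrow{\lambda_p^c}(T^K_1\times\Gm)(\Z_p)\xrightarrow{\text{pr}}\Gm(\Z_p)=\Z_p^\times$, where the composite is just $N$. The lemma expresses $[\Gm(\Z_p):\im N]$ — which is $e_{T,p}$ by Lemma~\ref{2.1} — in terms of $[\,(T^K_1\times\Gm)(\Z_p):\im\lambda_p^c\,]$, the kernel/cokernel data of the projection, and the $T^K_1$-component. Since the projection is surjective with kernel $T^K_1(\Z_p)$, this cleanly separates the $\Gm$-contribution $e_{T,p}$ from the $\Sq_{T^K_1}$-contribution, and the latter is exactly the $q$-symbol of squaring on $T^K_1(\Z_p)$.

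The main obstacle I anticipate is the $p=2$ computation of $q(\Sq_{T^K_1(\Z_2)})=2^d$. Away from $2$ the squaring map is essentially an isomorphism on the relevant pro-$p$ groups and the verification is formal, but at $p=2$ one must carefully analyze the local unit groups $O_{K_v}^\times$ and their norm-one subgroups place by place, distinguishing the split, inert, and ramified cases over $2$, and confirm that the total $q$-symbol assembles to $2^{\sum_{v|2}[K^+_v:\Q_2]}=2^{[K^+:\Q]}=2^d$. The splitting behaviour and the $2$-torsion in the norm-one units (which governs $\ker\Sq$) are the delicate points; once the structure of $T^K_1(\Z_2)$ as a compact $2$-adic group of the correct dimension is pinned down, the factor $2^d$ follows from the standard index $[\,M:2M\,]=2^{\operatorname{rank}M}$ for a finitely generated $\Z_2$-module $M$.
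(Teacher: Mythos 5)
Your proposal is correct, and it reaches the formula by a decomposition that is essentially dual to the paper's. The paper introduces an auxiliary map $m\colon (T^K_1\X\Gm)(\Z_p)\to T^{K,\Q}(\Z_p)$, $((y_v)_v,y_p)\mapsto ((y_vy_p)_v,y_p^2)$, so that $\lambda_p^c\circ m=\Sq$ on $(T^K_1\X\Gm)(\Z_p)$, and applies Lemma~\ref{3.1} with $\alpha=m$, $\beta=\lambda_p^c$; this requires computing $|\coker \Sq|$, $|\coker m|=[N(T^{K,\Q}(\Z_p)):(\Z_p^\times)^2]$, $\ker\Sq$ and $\ker m$ separately, and $e_{T,p}$ only emerges from the combination $[\Z_p^\times:(\Z_p^\times)^2]\cdot[N(T^{K,\Q}(\Z_p)):(\Z_p^\times)^2]^{-1}$. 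You instead compose on the other side with the projection $\mathrm{pr}$ to the $\Gm$-factor, using $\mathrm{pr}\circ\lambda_p^c=N$ and taking $\alpha=\lambda_p^c$, $\beta=\mathrm{pr}$ in Lemma~\ref{3.1}: since $\mathrm{pr}$ is surjective with kernel $T^K_1(\Z_p)$ and $\lambda_p^c(\ker N)=T^K_1(\Z_p)^2\X\{1\}$, the lemma gives $|\coker\lambda_p^c|=e_{T,p}\cdot[T^K_1(\Z_p):T^K_1(\Z_p)^2]$ in one step, and dividing by $|\ker\lambda_p^c|=|\ker\Sq_{T^K_1(\Z_p)}|$ yields $q(\lambda_p^c)=e_{T,p}\cdot q(\Sq_{T^K_1(\Z_p)})$. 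The case analysis you then need is exactly Lemma~\ref{4.11}, which you correctly re-derive: for odd $p$ squaring is bijective on the pro-$p$ part of the norm-one units so $q(\Sq)=1$, and at $p=2$ the $\Z_2$-rank of $T^K_1(\Z_2)$ equals $d=\sum_{v\in S_2}[K^+_v:\Q_2]$, giving $q(\Sq)=2^d$. Your route buys some economy — $e_{T,p}$ appears at once as the tautological index $[\Z_p^\times:N(T^{K,\Q}(\Z_p))]$, with no auxiliary map $m$ and no computation of $[\Z_p^\times:(\Z_p^\times)^2]$ — while the paper's route has the virtue of staying literally within the stated hypotheses of Lemma~\ref{3.1}. (Your appeal to Lemma~\ref{2.1} is unnecessary: the cokernel of $N$ on integral points has order $e_{T,p}$ by definition; Lemma~\ref{2.1} is needed elsewhere, for the global index.)

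One technical point you should flag in a careful write-up: Lemma~\ref{3.1} as stated assumes $\ker\beta\alpha$ and $\coker\beta\alpha$ are finite, and in your application $\ker\beta\alpha=\ker\bigl(N|_{T^{K,\Q}(\Z_p)}\bigr)=T^K_1(\Z_p)$ is infinite. This is harmless but not automatic: the snake-lemma proof of Lemma~\ref{3.1} never uses finiteness of $\ker\beta\alpha$ itself, only finiteness of the three indices $[C:\im\beta\alpha]$, $[B:\im\alpha]$ and $[\ker\beta:\alpha(\ker\beta\alpha)]$, all of which hold here ($e_{T,p}<\infty$ by the local norm index theorem, $[T^K_1(\Z_p):T^K_1(\Z_p)^2]<\infty$ by the structure of local units, and $\coker\lambda_p^c$ finite since $\lambda$ is an isogeny of tori restricted to compact open subgroups). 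So you should either restate Lemma~\ref{3.1} with these weaker hypotheses or re-run its snake-lemma argument in your setting; as written, invoking the lemma verbatim is a (small) gap.
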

\begin{proof}
By $\eqref{TZp}$, every element $x\in T^{K, \Q}(\Z_p)$ is of the form $((x_v)_{v \in S_p}, x_p)$ with $N(x_v)=x_p$ for all $ v \in S_p$.
Consider the homomorphisms 
$$ (T^K_1\X \Gm)(\Z_p) \xrightarrow{m}T^{K,\Q}(\Z_p) \xrightarrow{\lambda_p^c}(T^K_1\X \Gm)(\Z_p),$$
where $((y_v)_{v\in S_p}, y_p)\xmapsto{m}((y_v y_p)_{v\in S_p},
y_p^2)$ and recall that
  $((x_v)_v,x_p)\xmapsto{\lambda^c_p}((x^2_v)_vx^{-1}_p,x_p)$. It is
easy to see that the composition $\lambda_p^c \circ  m$ is the squared
map $\Sq:(y,y')\mapsto (y^2,(y')^2)$.  
Therefore, by Lemma~\ref{3.1}, we have 
\begin{equation*}\label{coker lambda^c_p}
q(\lambda^c_p)=\frac{\vert\coker \lambda_p^c\vert}{\vert\ker \lambda_p^c\vert}
=\vert\coker \Sq\vert\cdot\vert\coker m]^{-1}\cdot[\ker \Sq:\ker m]^{-1}.
\end{equation*}

First, $\vert\coker \Sq\vert=[T^K_1(\Z_p): T^K_1(\Z_p)^2] \cdot [\Z_p^\X : (\Z_p^\X)^2]$. Suppose $y=((y_v)_v, y_p) \in \ker \Sq$. Then $y_p=\pm 1$, and $y_v^2=1 \ \forall v$. If $v$ is inert or ramified in $K$, then $y_v=\pm 1$. If $v$ splits in $K$, then $y_v=(y_w, y_{\bar{w}})$, and $y_w=\pm 1$, $y_{\bar{w}}= \pm 1$. Since $N(y_v)=1$, $y_v=(1, 1)$ or $(-1, -1)$, i.e., $y_v=\pm 1$. We conclude that $\ker \Sq =\{\pm 1\}^{S_p} \X \{\pm 1\}$.

On the other hand, since $\ker N \subset \im m$, we have $\mathrm{coker}\ m\simeq N(T^{K,\Q}(\Z_p))\slash N(\im m).$ 
Recall that $\im m=\{((x_v x_p)_{v \in S_p}, x_p^2)\}$, one has $N(\im m)=\{x_p^2 \mid x_p \in \Z_p^\X\}$.
Therefore, $\vert\coker m\vert= [N(T^{K,\Q}(\Z_p)): (\Z_p^\times)^2]$. 
Clearly, $\ker m=\{\pm 1\}.$

Now,
\begin{equation}
\begin{split}
q(\lambda^c_p)&=[T^K_1(\Z_p): T^K_1(\Z_p)^2] \cdot [\Z_p^\times : (\Z_p^\times)^2]\cdot
[N(T^{K,\Q}(\Z_p))): (\Z_p^\times)^2]^{-1}\cdot2^{-\vert S_p\vert}\\
&=[T^K_1(\Z_p): T^K_1(\Z_p)^2]\cdot [\Z_p^\X: N(T^{K, \Q}(\Z_p))]\cdot 2^{-\vert S_p\vert}\\
&=[T^K_1(\Z_p): T^K_1(\Z_p)^2]\cdot 2^{-\vert S_p\vert}\cdot e_{T,p}.
\end{split}
\end{equation}
The lemma then follows from Lemma~\ref{4.11}. \qed
\end{proof}

For proving Lemma~\ref{4.11}, we recall the structure theorem 
of $p$-adic local units. 

\begin{prop}\label{localunit}
Let $k/\Qp$ be a finite extension of degree $d$ with ring of integers $O_k$ and residue field $\Fq$.
Then $$k^\X \simeq \Z \oplus \Z/(q-1)\Z\oplus \Z/p^a \Z \oplus \Z_p^d$$ and  $$O_k^\X \simeq \Z/(q-1)\Z\oplus \Z/p^a \Z \oplus \Z_p^d,$$
where $p^a=\vert \mu_{p^\infty}(O_k)\vert .$ 
\end{prop}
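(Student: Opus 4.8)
The plan is to reduce the statement to the structure of the group of principal units and then invoke the structure theorem for finitely generated modules over the PID $\Z_p$. To begin, the normalized valuation $v\colon k^\times\to\Z$ gives a short exact sequence $1\to O_k^\times\to k^\times\to\Z\to 0$, which splits by sending $1\in\Z$ to a chosen uniformizer $\pi$, since $\Z$ is free. This yields $k^\times\simeq\Z\oplus O_k^\times$, so it suffices to establish the second displayed isomorphism.

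The next step is to peel off the tame part. The reduction map $O_k^\times\to\F_q^\times$ is surjective with kernel the group of principal units $U^{(1)}:=1+\mathfrak{m}$, where $\mathfrak{m}$ is the maximal ideal of $O_k$. Because $\gcd(q-1,p)=1$, Hensel's lemma lifts $\F_q^\times$ to the $(q-1)$-st roots of unity (the Teichm\"uller representatives), producing a splitting $O_k^\times\simeq\mu_{q-1}\oplus U^{(1)}$ with $\mu_{q-1}\simeq\F_q^\times\simeq\Z/(q-1)\Z$. It then remains to prove $U^{(1)}\simeq\Z_p^d\oplus\Z/p^a\Z$.

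The key analytic input is the $p$-adic logarithm. Writing $e$ for the ramification index, for $m>e/(p-1)$ the series $\log(1+x)=\sum_{n\ge 1}(-1)^{n-1}x^n/n$ converges on $U^{(m)}:=1+\mathfrak{m}^m$ and defines an isomorphism of $\Z_p$-modules $\log\colon U^{(m)}\xrightarrow{\sim}(\mathfrak{m}^m,+)$ with inverse $\exp$. Since $\mathfrak{m}^m=\pi^m O_k\simeq O_k$ as additive groups and $(O_k,+)\simeq\Z_p^d$ is free of rank $d=[k:\Q_p]$, we get $U^{(m)}\simeq\Z_p^d$. As $U^{(m)}$ has finite index in $U^{(1)}$, the latter is an extension of a finite group by a finitely generated $\Z_p$-module, hence itself a finitely generated $\Z_p$-module; the structure theorem over $\Z_p$ gives $U^{(1)}\simeq\Z_p^r\oplus F$ with $F$ a finite $p$-group, and comparison with the finite-index free submodule $U^{(m)}\simeq\Z_p^d$ forces $r=d$. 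The torsion subgroup $F$ consists exactly of the $p$-power roots of unity in $O_k$, since any prime-to-$p$ root of unity reduces to a nontrivial element of $\F_q^\times$ and so lies outside $U^{(1)}$; and $\mu_{p^\infty}(O_k)$ is cyclic of order $p^a$ because every finite subgroup of $k^\times$ is cyclic. Thus $F\simeq\Z/p^a\Z$, and assembling the pieces gives both isomorphisms.

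The main obstacle is the logarithm isomorphism on $U^{(m)}$: one must verify convergence of both $\log$ and $\exp$ on the correct filtration step $m>e/(p-1)$ and check that they are mutually inverse homomorphisms there. Everything else is formal, being either the splitting of a free quotient (for the valuation and the tame layer) or a direct application of the finitely generated $\Z_p$-module structure theorem together with the elementary identification of the torsion as $\mu_{p^\infty}(O_k)$.
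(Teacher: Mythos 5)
Your proof is correct, and it is in essence the paper's own proof: the paper gives no argument of its own but simply cites Neukirch (Prop.\ 5.7, p.\ 140), and your argument---splitting off the valuation, lifting $\F_q^\times$ via Hensel/Teichm\"uller, identifying $U^{(m)}\simeq \mathfrak{m}^m\simeq \Z_p^d$ by $\log/\exp$ for $m>e/(p-1)$, and then applying the structure theorem for finitely generated $\Z_p$-modules with the torsion identified as $\mu_{p^\infty}(O_k)$---is precisely the standard proof given in that reference.
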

\begin{proof}
See \cite[Proposition 5.7, p.~140]{neukirch}.    
\end{proof}







\begin{lemma}\label{4.11}
We have
 \[ [T^K_1(\Z_p): T^K_1(\Z_p)^2]=\left.\begin{cases}
2^{\vert S_p\vert} & \text{if}\ p\neq 2; \\
2^{\vert S_p\vert+d} & \text{if}\ p=2,
\end{cases}\right.\]
and \[ q(\Sq_{T^K_1(\Z_p)})=\left. \begin{cases} 1 & \text{if } p\neq 2; \\
2^d & \text{if } p=2, \end{cases}\right. \]
where $d=[K^+:\Q]$ and $S_p$ is the set of places of $K^+$ lying over $p$.
\end{lemma}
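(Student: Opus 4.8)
The plan is to reduce this semilocal computation at $p$ to a product of purely local contributions, one for each place $v\in S_p$, and then read off each local index from the structure of the relevant unit group as a $\Z_p$-module. By \eqref{TZp} the group splits as $T^K_1(\Z_p)=\prod_{v\in S_p} G_v$, where $G_v=\{x_v\in O_{K_v}^\times : N(x_v)=1\}$; explicitly $G_v\cong O_{K^+_v}^\times$ when $v$ splits in $K$ (via $(x_w,x_{\bar w})\mapsto x_w$, the norm condition forcing $x_{\bar w}=x_w^{-1}$), and $G_v=O_{K_w}^{(1)}$, the norm-one units of the quadratic extension $K_w/K^+_v$, when $v$ is inert or ramified. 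Since both squaring and formation of $2$-torsion respect this product, I would use $[T^K_1(\Z_p):T^K_1(\Z_p)^2]=\prod_{v}[G_v:G_v^2]$ together with $q(\Sq_{T^K_1(\Z_p)})=[T^K_1(\Z_p):T^K_1(\Z_p)^2]/|T^K_1(\Z_p)[2]|$, the denominator being $\prod_v |G_v[2]|$. In every case $G_v[2]=\{\pm1\}$ (any $x$ with $x^2=1$ in a field of characteristic $0$ is $\pm1$, and $N(\pm1)=1$), so $|G_v[2]|=2$ and $|T^K_1(\Z_p)[2]|=2^{|S_p|}$.

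The heart of the matter is to show that each $G_v$ is, as a compact $\Z_p$-module, of the form $(\text{finite torsion})\times \Z_p^{d_v}$ with $d_v=[K^+_v:\Q_p]$. In the split case this is immediate from Proposition~\ref{localunit}, which gives $O_{K^+_v}^\times\cong \Z/(q_v-1)\oplus \Z/p^a\oplus \Z_p^{d_v}$. The inert/ramified case is where I expect the real work: I must determine the $\Z_p$-rank of the norm-one unit group $O_{K_w}^{(1)}$. I would do this by passing to a finite-index pro-$p$ subgroup on which the $p$-adic logarithm is an isomorphism onto the additive group of an ideal of $K_w$, under which the norm $x\mapsto x\bar x$ becomes the $\Q_p$-linear map $1+\sigma$ on $K_w$, with $\sigma$ the nontrivial automorphism of $K_w/K^+_v$. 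Since $1+\sigma$ is, up to scaling, the trace and hence surjective onto $K^+_v$, its kernel $\{x\in K_w:\sigma x=-x\}$ has $\Q_p$-dimension $d_v$, so the $\Z_p$-rank of $O_{K_w}^{(1)}$ equals $d_v$. (Equivalently one may invoke local class field theory to see that $N\colon O_{K_w}^\times\to O_{K^+_v}^\times$ has open image, whence the kernel has $\Z_p$-rank $2d_v-d_v=d_v$.)

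Finally I would feed this structure into the elementary index formula for a compact module $A\cong T\times\Z_p^{m}$ with $T$ finite, namely $[A:A^2]=|A[2]|\cdot[\Z_p:2\Z_p]^{m}$, combined with $[\Z_p:2\Z_p]=1$ for $p\neq2$ and $[\Z_2:2\Z_2]=2$. This gives $[G_v:G_v^2]=2$ when $p\neq2$ and $[G_v:G_v^2]=2\cdot2^{d_v}=2^{1+d_v}$ when $p=2$. Taking the product over $v\in S_p$ and using $\sum_{v\in S_p}d_v=[K^+:\Q]=d$ yields $[T^K_1(\Z_p):T^K_1(\Z_p)^2]=2^{|S_p|}$ for $p\neq2$ and $2^{|S_p|+d}$ for $p=2$. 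Dividing by $|T^K_1(\Z_p)[2]|=2^{|S_p|}$ then gives $q(\Sq_{T^K_1(\Z_p)})=1$ for $p\neq2$ and $2^{d}$ for $p=2$, as claimed. The only genuine obstacle is the rank computation for $O_{K_w}^{(1)}$ in the second paragraph; everything else is bookkeeping with the structure theorem and the product decomposition.
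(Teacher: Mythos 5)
Your proof is correct and follows essentially the same route as the paper's: decompose $T^K_1(\Z_p)=\prod_{v\in S_p}O^{(1)}_{K_v}$, show each factor is $(\text{finite torsion})\times \Z_p^{d_v}$, compute the index of squares, and divide by the kernel $\{\pm 1\}^{S_p}$ of squaring. The only variations are cosmetic: your primary rank computation uses the $p$-adic logarithm and surjectivity of the trace, whereas the paper deduces $\rank_{\Z_p}O^{(1)}_{K_v}=d_v$ from finiteness of the norm index $[O^{\times}_{K^+_v}:N(O^{\times}_{K_v})]$ (exactly your parenthetical alternative), and the paper pins down the finite part by splitting it into its prime-to-$p$ and $p$-primary cyclic pieces rather than invoking your cleaner identity $|T/2T|=|T[2]|$ together with $G_v[2]=\{\pm 1\}$.
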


\begin{proof}
Note that $T^K_1(\Z_p)= \prod_{v\in S_p} O_{K_v}^{(1)}$, where $O_{K_v}^{(1)}$ consists of norm one elements in $O_{K_v}^\X$. We need to calculate $[O_{K_v}^{(1)}:(O_{K_v}^{(1)})^2]$ for $v\in S_p$.



Let $d_v=[(K^+)_v:\Q_p]$ and consider the exact sequence $$1\rightarrow O_{K_v}^{(1)}\rightarrow O_{K_v}^\X \rightarrow N(O_{K_v}^\X)\rightarrow1.$$ 
Note that $\rank_{\Z_p}O^\times_{K_{v}}=2d_v$. Since $[O_{K^+_v}^\X :N(O_{K_v}^\X)]$ is finite, we have $\rank_{\Z_p}N(O_{K_v}^\X)=\rank_{\Z_p}O_{K^+_v}^\X=d_v$ and hence $\rank_{\Z_p}O^{(1)}_{K_v}=d_v$.

By Proposition~\ref{localunit}, 
we have $O^{(1)}_{K_v} \simeq A \oplus B \oplus \Z_p^{d_v}$, where $A$ is a finite cyclic group of prime-to-$p$ order and $B$ is a finite cyclic group of $p$-power order.


Suppose $p$ is odd.  Then $O^{(1)}_{K_v}/(O^{(1)}_{K_v})^2 \simeq A/ 2A$.   Since $O^{(1)}_{K_v}$ contains $-1$,  we have $A/2A=\Z/2\Z$. Thus, $[O^{(1)}_{K_v}:(O^{(1)}_{K_v})^2]=2$ and $[T^K_1(\Z_p): T^K_1(\Z_p)^2]= \prod_{v\in S_p}[O^{(1)}_{K_v}:(O^{(1)}_{K_v})^2]=2^{\vert S_p \vert}$.

Suppose $p$ is even. The group $O^{(1)}_{K_v}$ contains $-1$; 
Therefore, $B/2B=\Z/2\Z$. We have $O^{(1)}_{K_v}/(O^{(1)}_{K_v})^2
\simeq \Z/2\Z \oplus (\Z/2 \Z)^{d_v}.$ Therefore,
$[O^{(1)}_{K_v}:(O^{(1)}_{K_v})^2]=2^{1+d_v}$ and $[T^K_1(\Z_p):
T^K_1(\Z_p)^2]= \prod_{v\in
  S_p}[O^{(1)}_{K_v}:(O^{(1)}_{K_v})^2]=2^{\vert S_p\vert +d}$ as
$\sum_{v \in S_p}d_v=d$. This proves the first result. 

The second result follows from the first result and $\vert \ker
\Sq_{T^K_1(\Z_p)}\vert = \vert \{\pm 1\}^{S_p}\vert =2 ^{\vert
  S_p\vert}$ (see the proof of Lemma \ref{4.10}). \qed


\end{proof}

\subsection{Proof of Theorem \ref{Main theorem}}
(1)
By \cite[Remark, p.~128]{ono:tamagawa_no}, we have $\tau(\Gm)=1$,
  and $\tau(T^{K_i}_1)=2$ for each $i$ 
and we conclude 
$$\tau(T^{K,\Q})=\frac{2^r}{[\Gm(\A): N(T^{K,\Q}(\A))\cdot \Gm(\Q)]}$$
from (\ref{tau(E)}), Theorem~\ref{theorem tau (E)} 
and Lemma \ref{tau(E)=N}.

(2) By $(\ref{h_T/h_{TxT'}})$ and Lemmas \ref{4.3}, \ref{tau(E)=N}, \ref{4.6}, \ref{4.7}, \ref{4.8} and \ref{4.10},
we obtain
\begin{equation*}
\begin{split}
 h({T^{K,\Q}}){}&=h({T^K_1})\cdot \frac{1}{[\Gm(\A): N(T^{K,\Q}(\A))\cdot \Gm(\Q)]}\cdot \frac{2^{-d+1}}{2 \cdot 1} \cdot \prod_{p\in S_{K/K^+}} e_{T,p} \cdot 2^d. \\
& =\frac{h({T^K_1})\cdot\prod_{p\in S_{K/K^+}}e_{T,p}}{[\Gm(\A): N(T^{K,\Q}(\A))\cdot \Gm(\Q)]} .
\end{split}  
\end{equation*}
 

It is known (see \cite[(16), p.~375]{Shyr-class-number-relation}) that 
\[ h({T^{K_i}_1})=\frac{h_{K_i}}{h_{K_i^+}} \frac{1}{Q_i\cdot 2^{t_i-1}}, \]
where $Q_i$ is the Hasse unit index of $K_i/K^+_i$, and $t_i$ is the number of primes of $K^+_i$ ramified in $K_i$.
Thus,
\begin{align*}
h(T^K_1)&=\prod^r_{i=1}h(T_1^{K_i})=\prod^r_{i=1}\frac{h_{K_i}}{h_{K_i^+}} \frac{1}{Q_i\cdot 2^{t_i-1}}=\frac{h_{K}}{h_{K^+}} \frac{1}{Q\cdot 2^{t-r}}.
\end{align*}
This completes the proof of the theorem. \qed


\section{Local and global indices}\label{sec:I}

\subsection{Local indices}
Keep the notation of the previous section. Denote by $f_v$ the inertia degree of a finite place $v$ of $K^+$. Let $N_v:=N(O_{K_v}^\times)$ and 
$H_v:=\Zp^\times \cap N_v $, if $v|p$. We define an integer 
$e(K/K^+,\Q, p)$, where $p$ is a prime, as follows. For $p\neq 2$, let 
\begin{equation}
    e(K/K^+,\Q,p):=\begin{cases}
    1 & \text{if there exists $v\in S_p$ ramified in $K$ such that $f_v$ is odd;} \\
    0 & \text{otherwise.}
    \end{cases}
\end{equation}
Suppose $p=2$. For any place $v|2$ of $K^+$, let $\Phi_v:\Z_2^\times \to O_{K^+_v}^\times/(O_{K^+_v}^\times)^2$ be the natural map. Consider the condition
\begin{equation}\label{eq:5.2}
     \Phi_v(\Z_2^\times) \not\subset \ol N_v,
\end{equation}
where $\ol N_v$ is the image of $N_v$ in $O_{K^+_v}^\times/(O_{K^+_v}^\times)^2$.
Note that $[\Z_2^\times:H_v]=2$ if and only if \eqref{eq:5.2} holds (see an explanation in the proof of Proposition \ref{I.1} (4)).
Define 
\begin{equation}
    e(K/K^+,\Q, 2):=\begin{cases}
    0 & \text{if $\Phi_v(\Z_2^\times) \subset \ol N_v$ for all $v|2$;} \\
    2 & \text{if there exist two places $v_1,v_2|2$ of $K^+$ satisfying \eqref{eq:5.2} and $H_{v_1}\neq H_{v_2}$;}  \\
    1 & \text{otherwise.}
    \end{cases}
\end{equation}

Define
\begin{equation}\label{eKK}
    e(K/K^+,\Q):=\sum_{p} e(K/K^+,\Q,p). 
\end{equation}
Note that $e(K/K^+,\Q,p)=0$ if $p\not \in S_{K/K^+}$. Recall that
$e_{T,p}=[\Z^\times_p:N(T^{K,\Q}(\Z_p))]$. We shall evaluate $e_{T,p}$
and interpret $e_{T,p}$ by using the invariant $e(K/K^+,\Q)$ in the
following. 

\begin{prop}\label{I.1} \ 

{\rm (1)} We have $e_{T,p}=[\Zp^\times: \cap_{v\in S_p} H_v]$.

{\rm (2)} We have $e_{T,p}|2$ if $p\neq 2$, and $e_{T,2}|4$.

{\rm (3)} Suppose $p\neq 2$. 
Then $e_{T,p}=2$ if and only if there exists a place $v\in S_p$ such that $v$ is ramified in $K$ and $f_v$ is odd.

{\rm (4)} Suppose $p=2$. 
Then $2|e_{T,p}$ if and only if there exists $v\in S_p$ 
satisfying \eqref{eq:5.2}.
Moreover, $e_{T,p}=4$ if and only if there exist two places $v_1, v_2\in S_p$ satisfying \eqref{eq:5.2} and $H_{v_1}\neq H_{v_2}$.
\end{prop}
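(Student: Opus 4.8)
The plan is to reduce all four statements to the computation of the local norm subgroups $N_v=N(O_{K_v}^\times)$ and their intersections with $\Zp^\times$, starting from the explicit description \eqref{TZp} of $T^{K,\Q}(\Zp)$. For (1) I would argue directly: an element of $T^{K,\Q}(\Zp)$ is a tuple $((x_v)_v,x_p)$ with $x_v\in O_{K_v}^\times$, $x_p\in\Zp^\times$ and $N(x_v)=x_p$ for every $v\in S_p$, and the norm map sends it to $x_p$. Hence $x_p$ lies in the image exactly when it can be written simultaneously as a local norm at every $v$, i.e. when $x_p\in\bigcap_{v\in S_p}(\Zp^\times\cap N_v)=\bigcap_v H_v$; conversely every such $x_p$ is attained. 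Thus $N(T^{K,\Q}(\Zp))=\bigcap_v H_v$ and $e_{T,p}=[\Zp^\times:\bigcap_v H_v]$.

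For (2) the key observation is that $(\Zp^\times)^2\subseteq H_v$ for every $v$: if $u\in\Zp^\times\subseteq O_{K_v}^\times$, then $u$ is fixed by the conjugation of $K_v/K^+_v$, so $N(u)=u\ol u=u^2$ (this also holds in the split case, where $u\mapsto(u,u)$ has norm $u^2$), whence $u^2\in N_v\cap\Zp^\times=H_v$. Therefore $(\Zp^\times)^2\subseteq\bigcap_v H_v$ and $e_{T,p}$ divides $[\Zp^\times:(\Zp^\times)^2]$, which is $2$ for $p$ odd and $4$ for $p=2$. I would also record here the local class field theory input $[O_{K^+_v}^\times:N_v]\le 2$, with equality iff $v$ is ramified in $K$ (for split or inert $v$ the norm is surjective on units, so $N_v\supseteq\Zp^\times$ and $H_v=\Zp^\times$); this forces $[\Zp^\times:H_v]\in\{1,2\}$.

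For (3), since $e_{T,p}\mid 2$ and each $H_v$ is either $\Zp^\times$ or $(\Zp^\times)^2$ for $p$ odd, we have $e_{T,p}=2$ iff $[\Zp^\times:H_v]=2$ for some $v$; by (2) only a ramified $v$ can contribute. For such $v$ (tamely ramified, as $p$ is odd) the norm group on units is $N_v=\{u\in O_{K^+_v}^\times:\ol u\in(\F_{q_v}^\times)^2\}$, where $\ol u$ is the residue and $q_v=p^{f_v}$. Hence $[\Zp^\times:H_v]=2$ iff $\F_p^\times\not\subseteq(\F_{q_v}^\times)^2$, and a short cyclic‑group computation, using $(q_v-1)/(p-1)=1+p+\cdots+p^{f_v-1}\equiv f_v\pmod 2$, shows this occurs exactly when $f_v$ is odd. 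This yields the stated criterion.

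For (4) I would first prove the remark: since $u^2=N(u)\in N_v$ for $u\in O_{K^+_v}^\times$, we have $(O_{K^+_v}^\times)^2\subseteq N_v$, so $\ol N_v=N_v/(O_{K^+_v}^\times)^2$ and condition \eqref{eq:5.2} reads $\Zp^\times\not\subseteq N_v$, i.e. $[\Z_2^\times:H_v]=2$. Then I pass to the $\F_2$‑vector space $V:=\Z_2^\times/(\Z_2^\times)^2\cong(\Z/2\Z)^2$: each $\ol H_v:=H_v/(\Z_2^\times)^2$ is either all of $V$ (when $v$ fails \eqref{eq:5.2}) or a line (when $v$ satisfies it), and $e_{T,2}=[V:\bigcap_v\ol H_v]$. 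Thus $2\mid e_{T,2}$ iff at least one line occurs, i.e. some $v$ satisfies \eqref{eq:5.2}; and $e_{T,2}=4$ iff $\bigcap_v\ol H_v=0$, which for lines in $(\Z/2\Z)^2$ happens exactly when two distinct lines occur, i.e. two places $v_1,v_2$ satisfy \eqref{eq:5.2} with $\ol H_{v_1}\neq\ol H_{v_2}$, equivalently $H_{v_1}\neq H_{v_2}$. The main obstacle is precisely the case $p=2$, where $K_v/K^+_v$ may be wildly ramified and $N_v$ is not governed by a residue‑field parity argument; what makes (4) tractable is that $[\Z_2^\times:H_v]$ is still always $\le 2$, so the entire problem lives in the four‑element group $V$ and becomes linear algebra over $\F_2$, the genuinely arithmetic content of deciding \eqref{eq:5.2} being what later requires local norm residue symbols.
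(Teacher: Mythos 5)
Your proposal is correct and follows essentially the same route as the paper's proof: part (1) read off from the description \eqref{TZp}, part (2) via $(\Zp^\times)^2\subseteq H_v$ together with the local norm index theorem, part (3) via the reduction of $N_v$ to $(\F_{q_v}^\times)^2$ in the residue field and the parity of $1+p+\cdots+p^{f_v-1}$, and part (4) via the equivalence of \eqref{eq:5.2} with $[\Z_2^\times:H_v]=2$ followed by the observation about two distinct index-$2$ subgroups. The only cosmetic difference is that in (2) you obtain $(\Zp^\times)^2\subseteq H_v$ directly from $N(u)=u^2$ rather than deducing it from the index bound $[\Zp^\times:H_v]\le 2$, and in (4) you phrase the final step explicitly as intersecting lines in the Klein four group $\Z_2^\times/(\Z_2^\times)^2$, which the paper leaves as ``clearly.''
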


\begin{proof}
(1) It follows immediately from the description of $T^{K,\Q}(\Zp)$ that $N(T^{K,\Q}(\Zp))=\cap_{v\in S_p} H_v$.

(2) Consider the inclusion $\Zp^\times/H_v \embed O_{K^+_v}^\times/N(O_{K_v}^\times)$. The latter group has order dividing $2$ by the local norm index theorem. Thus, $H_v\subset \Zp^\times$ is of index $1$ or $2$. In particular $H_v$ contains $(\Zp^\times)^2$. It follows that the intersection $\cap H_v$ contains $(\Zp^\times)^2$, and that $e_{T,p}$ divides $[\Zp^\times:(\Zp^\times)^2]$, which is $4$ or $2$ according as $p=2$ or not.

(3) As $p\neq 2$, we have $e_{T,p}=2$ if and only if there exists $v\in S_p$ such that $[\Zp^\times:H_v]=2$. We must show that 
$[\Zp^\times:H_v]=2$ if and only if $v$ is ramified in $K$ and $f_v$ is odd. 
If $v$ is unramified in $K$, then $N_v:=N(O_{K_v}^\times)$ is equal to $O_{K^+_v}^\times$ and $[\Zp^\times:H_v]=1$. Thus, 
$[\Zp^\times:H_v]=2$ only when $v$ is ramified, and we assume this now.
Note that $N_v\subset O_{K^+_v}^\times$ is the unique subgroup of index $2$ and it contains the principal unit subgroup $1+\pi_v O_{K^+_v}$. Therefore, its image $\ol N_v$ in the residue field $\kappa^\times =\F_{q_v}^\times$ is equal to $(\F_{q_v}^\times)^2$. The inclusion $\Zp^\times \embed O_{K^+_v}^\times$ induces the inclusion $\Fp^\times \embed \F_{q_v}^\times$ for which the image $\ol H_v$ of $H_v$ is equal to $\Fp^\times \cap \ol N_v$. Since $\ol N_v$ is the unique subgroup of $\F_{q_v}^\times$ of index $2$, we have 
\[[\Zp^\times:H_v]=[\Fp^\times:\ol H_v]=[\Fp^\times:\Fp^\times\cap (\F_{q_v}^\times)^2]. \]
Since $\F_{q_v}^\times$ is cyclic, $\Fp^\times \subset (\F_{q_v}^\times)^2$ if and only if $p-1|(q_v-1)/2$. The latter is equivalent to $2|(p^{f_v-1}+\dots +1)$ or that $f_v$ is even. Thus, $[\Zp^\times:H_v]=2$ if and only if $f_v$ is odd. 

(4) Since $N_v$ contains $(O_{K^+_v}^\times)^2$, we have $[\Zp^\times :H_v]=[\Phi_v(\Zp^\times):\Phi_v(\Zp^\times)\cap \ol N_v]$, which is equal to $2$ if and only if $\Phi_v(\Zp^\times) \not\subset \ol N_v$. This proves the first statement. 
Clearly, $e_{T,p}=4$ if and only if there exist two places $v_1,v_2\in S_p$ such that $[\Zp^\times :H_{v_1}]=[\Zp^\times :H_{v_2}]=2$ and $H_{v_1}\neq H_{v_2}$. Then the second statement follows from what we have just proved. \qed
\end{proof}

\begin{cor}\label{5.2}
We have $e_{T,p}=2^{e(K/K^+,\Q,p)}$ for all primes $p$.
\end{cor}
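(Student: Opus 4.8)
The plan is to read off the corollary directly from Proposition~\ref{I.1}, since the invariant $e(K/K^+,\Q,p)$ was defined precisely so as to record the exponent of $2$ in $e_{T,p}$. No genuinely new computation is needed; I only have to check, case by case, that the value $2^{e(K/K^+,\Q,p)}$ dictated by the definition agrees with the index $e_{T,p}$ determined in Proposition~\ref{I.1}. I would organize this as a split between odd $p$ and $p=2$.

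For odd $p$, Proposition~\ref{I.1}(2) gives $e_{T,p}\mid 2$, so $e_{T,p}\in\{1,2\}$ and only the criterion for the value $2$ matters. Proposition~\ref{I.1}(3) says $e_{T,p}=2$ exactly when some $v\in S_p$ is ramified in $K$ with $f_v$ odd, which is verbatim the condition defining $e(K/K^+,\Q,p)=1$; in the complementary case the definition gives $e(K/K^+,\Q,p)=0$ and Proposition~\ref{I.1}(3) gives $e_{T,p}=1$. Hence $e_{T,p}=2^{e(K/K^+,\Q,p)}$ for all odd $p$.

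For $p=2$, Proposition~\ref{I.1}(2) gives $e_{T,2}\mid 4$, so $e_{T,2}\in\{1,2,4\}$, and I would match the three branches of the definition of $e(K/K^+,\Q,2)$. If \eqref{eq:5.2} fails for every $v\mid 2$, then the first assertion of Proposition~\ref{I.1}(4) forces $2\nmid e_{T,2}$, so $e_{T,2}=1=2^0$, matching the branch $e(K/K^+,\Q,2)=0$. If two places $v_1,v_2\mid 2$ satisfy \eqref{eq:5.2} with $H_{v_1}\neq H_{v_2}$, the second assertion of Proposition~\ref{I.1}(4) gives $e_{T,2}=4=2^2$, matching the branch $e(K/K^+,\Q,2)=2$.

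The only step requiring any care is the remaining ``otherwise'' branch, where $e(K/K^+,\Q,2)=1$ and I must show $e_{T,2}=2$; this is the one place where both halves of Proposition~\ref{I.1}(4) are used together. Here at least one $v\mid 2$ satisfies \eqref{eq:5.2}, so the first assertion gives $2\mid e_{T,2}$, while the failure of the hypothesis of the second assertion (there are no two places with distinct $H_{v}$ both satisfying \eqref{eq:5.2}) rules out $e_{T,2}=4$; combined with $e_{T,2}\mid 4$ this forces $e_{T,2}=2=2^1$. Since these branches exhaust all possibilities, the corollary follows. Everything else is a direct reading of Proposition~\ref{I.1}, so the main (modest) obstacle is simply the bookkeeping in this middle case, where the divisibility bound and the exact criterion for the value $4$ must be invoked simultaneously.
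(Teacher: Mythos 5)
Your proof is correct and is exactly what the paper intends: Corollary~\ref{5.2} is stated there without proof as an immediate consequence of Proposition~\ref{I.1}, and your case-matching (odd $p$ via parts (2)--(3), and the three branches at $p=2$ via parts (2) and (4), with the ``otherwise'' branch pinned down by combining $2\mid e_{T,2}$, $e_{T,2}\mid 4$, and the failure of the criterion for $4$) is precisely that reading.
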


By Proposition~\ref{I.1}, one has $e_{T,2}|4$. We now give an example of $T=T^{K,\Q}$ such that $e_{T,2}=4$. Let 
$E$ and $E'$ be two imaginary quadratic field such that $2$ is ramified in both $E$ and $E'$. We also assume that $N_{E_2/\Q_2}(O_{E_2}^\times)\neq N_{E'_2/\Q_2}(O_{E'_{2}}^\times)$. 
Put $K:=E\times E'$, the product of $E$ and $E'$ (not the composite), 
and then $K^+=\Q\times \Q$. Let $v_1$ and $v_2$ be the two places of $K^+$ over $2$. 
We have $H_{v_1}=N_{E_2/\Q_2}(O_{E_2}^\times)$ and $H_{v_2}=N_{E'_2/\Q_2}(O_{E'_{2}}^\times)$. Then we see that 
$[\Z_2^\times:H_{v_1}]=[\Z_2^\times:H_{v_2}]=2$, and that
by Proposition~\ref{I.1} (1), 
$e_{T,2}=[\Z_2^\times:H_{v_1} \cap H_{v_2}]=4$.


For $f\in \bbN$ and $q=p^f$, denote by $\Q_q$ the unique unramified extension of $\Qp$ of degree $f$ and $\Z_q$ the ring of integers. 

\begin{lemma}\label{unramified} 
Let $f\in \bbN$ and $q=2^f$. 

{\rm (1)} $\Z_q^\X/(\Z_q^\X)^2\simeq (1+2\Z_q)/(1+2\Z_q)^2\simeq (\Z/2\Z)^{f+1}$.

{\rm (2)} We have $1+8\Z_q\subset (1+2\Z_q)^2 \subset 1+4\Z_q$. 
Under the isomorphism 
$(1+4\Z_q)/(1+8\Z_q)\simeq \Fq$ $(\,[1+4a]\mapsto \bar a\,)$, the subgroup $(1+2\Z_q)^2/(1+8\Z_q)$ corresponds to the image $\varphi(\Fq)$ of the Artin-Schreier map $\varphi(x)=x^2-x: \Fq\to \Fq$.

{\rm (3)} We have 
\[ \Z_2^\times \cap (1+2\Z_q)^2=\begin{cases} 
1+4\Z_2 & \text{if $f$ is even;} \\
1+8\Z_2 & \text{otherwise.} \\
\end{cases}\]
\end{lemma}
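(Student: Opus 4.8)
The plan is to treat the three parts in order, reducing everything to the structure of the principal units $1+2\Z_q$ and a single Artin--Schreier computation over $\Fq$. For part (1) I would invoke the structure theorem (Proposition~\ref{localunit}) for $\Z_q^\times$. The key local input is that the unramified extension $\Q_q/\Q_2$ cannot contain $\zeta_4=i$, since $\Q_2(i)/\Q_2$ is ramified; hence $\mu_{2^\infty}(\Z_q)=\{\pm1\}$ and $\Z_q^\times\simeq \Z/(q-1)\Z\oplus \Z/2\Z\oplus \Z_2^f$. As $q-1$ is odd, the Teichm\"uller factor $\Z/(q-1)\Z$ is $2$-divisible and dies modulo squares, while squaring acts as multiplication by $2$ on the remaining factors; this gives $\Z_q^\times/(\Z_q^\times)^2\simeq (\Z/2\Z)^{f+1}$. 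The same decomposition shows $1+2\Z_q\simeq\{\pm1\}\times\Z_2^f$, and the inclusion $1+2\Z_q\hookrightarrow\Z_q^\times$ is an isomorphism modulo squares because the complementary odd-order factor is killed, which yields the first isomorphism of part (1).

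Part (2) is a direct computation. The inclusion $(1+2\Z_q)^2\subset 1+4\Z_q$ follows from the identity $(1+2a)^2=1+4(a+a^2)$. For $1+8\Z_q\subset(1+2\Z_q)^2$, I would solve $(1+2x)^2=1+8b$, equivalently $x^2+x-2b=0$, by Hensel's lemma: the derivative $2x+1$ is a unit and $x=0$ is an approximate root, so a solution $x\in 2\Z_q$ exists. The crucial observation is that over $\Z_q$ we have $(1+2x)^2=1+4(x^2+x)$, so under the additive isomorphism $(1+4\Z_q)/(1+8\Z_q)\isoto\Fq$ the class of $(1+2x)^2$ maps to $\bar x^2+\bar x=\varphi(\bar x)$ (recall $x^2+x=x^2-x$ in characteristic $2$). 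Therefore $(1+2\Z_q)^2/(1+8\Z_q)$ corresponds exactly to the image $\varphi(\Fq)$ of the Artin--Schreier map, as claimed.

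For part (3) I would exploit that $(1+2\Z_q)^2\supset 1+8\Z_q$, so membership of an element of $\Z_2^\times\cap(1+4\Z_q)=1+4\Z_2$ in $(1+2\Z_q)^2$ is detected in the finite quotient $\Fq$ of part (2). An element $1+4c$ with $c\in\Z_2$ maps to $\bar c\in\F_2\subset\Fq$, so it lies in $(1+2\Z_q)^2$ if and only if $\bar c\in\varphi(\Fq)\cap\F_2$. Since $0\in\varphi(\Fq)$ always, the question reduces to whether $1\in\varphi(\Fq)$, i.e.\ whether $\mathrm{Tr}_{\Fq/\F_2}(1)=0$; as this trace equals $f\bmod 2$, one has $1\in\varphi(\Fq)$ precisely when $f$ is even. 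Consequently, for $f$ even every coset of $1+4\Z_2$ is captured and the intersection is $1+4\Z_2$, whereas for $f$ odd only the cosets with $\bar c=0$, namely $1+8\Z_2$, survive.

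The main obstacle I anticipate is organizing the descent cleanly rather than any single hard estimate: one must first secure $1+8\Z_q\subset(1+2\Z_q)^2$ (the Hensel step) so that the whole analysis legitimately passes to the finite quotient $(1+4\Z_q)/(1+8\Z_q)\simeq\Fq$, and then keep careful track that the map $\Z_2\to\Fq$ has image in $\F_2$, so that the index question in part (3) collapses to the single trace condition $\mathrm{Tr}_{\Fq/\F_2}(1)=f\bmod 2$. Once these two bookkeeping points are in place, each assertion follows by a short calculation.
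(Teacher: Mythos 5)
Your proof is correct and follows essentially the same route as the paper's: part (1) via the unit structure theorem (Proposition~\ref{localunit}) with the splitting off of the odd-order Teichm\"uller part, part (2) via the identity $(1+2a)^2=1+4(a^2+a)$ plus Hensel's lemma, and part (3) by reducing membership to the Artin--Schreier image in the finite quotient $(1+4\Z_q)/(1+8\Z_q)\simeq \Fq$. The only cosmetic difference is the final criterion in (3): you invoke $1\in\varphi(\Fq)\iff \Tr_{\Fq/\F_2}(1)=0$ with $\Tr_{\Fq/\F_2}(1)=f\bmod 2$, whereas the paper tests the single representative $5$ and phrases solvability of $t^2-t=1$ as $\F_4\subset\Fq$, i.e.\ $2\mid f$ --- these are equivalent one-line finishes.
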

\begin{proof}
(1) The Teichm\"uller lifting $\omega$ gives a splitting of the exact sequence
$1\to (1+2\Z_q) \to \Z_q^\times \to \Fq^\times \to 1$. Thus,
$\Z_q^\times=\Fq^\times \times (1+2\Z_q)$ and hence
$(\Z_q^\times)^2=\Fq^\times \times (1+2\Z_q)^2$ because $q-1$ is
odd. This proves the first isomorphism. The second isomorphism follows
from $1+2\Z_q\simeq \{\pm 1 \} \times \Z_2^f$; see
Proposition~\ref{localunit}.

(2) For $a\in\Z_q$, we have $(1+2a)^2=1+4(a^2+a)$. Therefore, $(1+2\Z_q)^2 \subset 1+4\Z_q$. On the other hand, for any $b\in \Z_q$, the equation $T^2+T=2b$ has a solution in $\Z_q$ by Hensel's Lemma. This proves $1+8\Z_q\subset (1+2\Z_q)^2$.
The image of $(1+2\Z_q)^2/(1+8\Z_q)$ in $\Fq$ consists of elements $\bar a^2+\bar a=\varphi(\bar a)$ for all $\bar a\in \Fq$.

(3) It is clear that $1+8 \Z_2\subset \Z_2^\X \cap (1+2\Z_q)^2\subset 1+4\Z_2$. Note that $1+4 \Z_2\subset (1+2\Z_q)^2$ if and only if $5\in (1+2\Z_q)^2$. The latter is also equivalent to that the equation $1=t^2-t$ is solvable in $\Fq$ by (2). Since
$\F_4=\F_2[t]/(t^2+t+1)$,the previous condition is the same as 
$\F_4\subset \Fq$, or equivalently, $2|f$. \qed 
\end{proof}

\begin{lemma} Let the notation be as in Lemma~\ref{unramified}.

{\rm (1)} If $f$ is even, then there are $2(2^f-1)$ (resp.~$2^{f+1}$) ramified quadratic field extensions $K/\Q_q$ such that $\Z_2^\times\subset N(O_K^\times)$ (resp. $\Z_2^\times \not\subset N(O_K^\times)$). 

{\rm (2)} If $f$ is odd, then there are $2^{f}-2$ (resp.~$2^{f+2}-2^f$ ) ramified quadratic field extensions $K/\Q_q$ such that $\Z_2^\times \subset N(O_K^\times)$ (resp. $\Z_2^\times \not \subset N(O_K^\times)$). 
\end{lemma}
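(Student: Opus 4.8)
The plan is to convert the count into linear algebra over $\F_2$ using local class field theory. First I would recall that $K\mapsto N_{K/\Q_q}(K^\times)$ gives a bijection between the quadratic extensions of $\Q_q$ and the index-$2$ subgroups of $\Q_q^\times$, equivalently the hyperplanes of the $\F_2$-vector space $W:=\Q_q^\times/(\Q_q^\times)^2$. Writing $V:=\Z_q^\times/(\Z_q^\times)^2$ and using $\Q_q^\times=2^{\Z}\times\Z_q^\times$, Lemma~\ref{unramified}(1) gives $W\cong\langle\bar 2\rangle\oplus V$ with $\dim_{\F_2}W=f+2$ and $\dim_{\F_2}V=f+1$. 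For a quadratic extension $K$ let $H:=\ol{N_{K/\Q_q}(K^\times)}\subset W$ be the associated hyperplane; then $K/\Q_q$ is unramified exactly when $\Z_q^\times\subset N_{K/\Q_q}(K^\times)$, i.e.\ $V\subset H$, which (as $V$ is itself a hyperplane) forces $H=V$. Thus there is a unique unramified quadratic extension, and the number of ramified ones is $(2^{f+2}-1)-1=2^{f+2}-2$; this already equals the sum of the two numbers claimed in each case.

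Next I would express the norm condition in terms of $H$. Since a ramified quadratic extension has residue degree $1$, one has $v_{\Q_q}(N_{K/\Q_q}(x))=v_K(x)$ for all $x\in K^\times$, so $N_{K/\Q_q}(x)$ is a unit if and only if $x$ is; hence $N_{K/\Q_q}(O_K^\times)=N_{K/\Q_q}(K^\times)\cap\Z_q^\times$. Because $\Z_2^\times\subset\Z_q^\times$, this identity shows $\Z_2^\times\subset N_{K/\Q_q}(O_K^\times)$ if and only if $\Z_2^\times\subset N_{K/\Q_q}(K^\times)$, i.e.\ $U\subset H$, where $U\subset V$ is the image of $\Z_2^\times$ under $\Z_2^\times\hookrightarrow\Z_q^\times\twoheadrightarrow V$. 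I expect this reduction — pinning down that the whole condition depends only on the single subspace $U\subset V$ — to be the main obstacle, since it requires the valuation identity above together with care about which index-$2$ subgroup a ramified extension produces.

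Then I would compute $U$, whose kernel is $(\Z_2^\times\cap(\Z_q^\times)^2)/(\Z_2^\times)^2$. As every element of $\Z_2^\times$ is $\equiv 1\pmod 2$ it lies in $1+2\Z_q$, and $(\Z_q^\times)^2\cap(1+2\Z_q)=(1+2\Z_q)^2$; hence $\Z_2^\times\cap(\Z_q^\times)^2=\Z_2^\times\cap(1+2\Z_q)^2$, which by Lemma~\ref{unramified}(3) is $1+4\Z_2$ if $f$ is even and $1+8\Z_2$ if $f$ is odd. Since $(\Z_2^\times)^2=1+8\Z_2$ and $\dim_{\F_2}\Z_2^\times/(\Z_2^\times)^2=2$, the kernel has dimension $1$ for $f$ even and $0$ for $f$ odd, so $\dim_{\F_2}U=1$ if $f$ is even and $\dim_{\F_2}U=2$ if $f$ is odd.

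Finally I would count by dualizing. Each hyperplane $H$ is the kernel of a unique nonzero linear functional $\phi\colon W\to\F_2$, and the conditions $V\not\subset H$ and $U\subset H$ become $\phi|_V\neq 0$ and $\phi|_U=0$. The functionals with $\phi|_U=0$ form a space $U^{\perp}$ of cardinality $2^{(f+2)-\dim_{\F_2}U}$, those with $\phi|_V=0$ form $V^{\perp}\subset U^{\perp}$ of cardinality $2$, and any $\phi\in U^{\perp}\setminus V^{\perp}$ is automatically nonzero. Hence the number of ramified $K$ with $\Z_2^\times\subset N_{K/\Q_q}(O_K^\times)$ is $2^{(f+2)-\dim_{\F_2}U}-2$, namely $2^{f+1}-2=2(2^f-1)$ for $f$ even and $2^f-2$ for $f$ odd. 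Subtracting these from the total $2^{f+2}-2$ of ramified extensions yields $2^{f+1}$ and $2^{f+2}-2^f$ respectively for those with $\Z_2^\times\not\subset N_{K/\Q_q}(O_K^\times)$, establishing both parts.
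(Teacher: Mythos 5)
Your proof is correct, and it reaches the same numbers by a genuinely different counting mechanism than the paper. The paper works inside $V=\Z_q^\times/(\Z_q^\times)^2$: it counts index-two subgroups $N\subset\Z_q^\times$ whose image $\ol N$ contains $\Phi(\Z_2^\times)$, and then multiplies by two, invoking (without proof, as ``it is not hard to see'') that each such $N$ arises as $N(O_K^\times)$ for exactly two ramified quadratic extensions $K/\Q_q$. You instead work inside the larger space $W=\Q_q^\times/(\Q_q^\times)^2$, where class field theory gives a clean bijection between quadratic extensions and hyperplanes $H\subset W$, characterize ramifiedness as $V\not\subset H$, and replace the paper's two-to-one claim by the valuation identity $N_{K/\Q_q}(O_K^\times)=N_{K/\Q_q}(K^\times)\cap\Z_q^\times$ (valid since a ramified quadratic extension has residue degree one), after which the count becomes a transparent dual-functional computation: nonzero $\phi$ with $\phi|_U=0$ but $\phi|_V\neq 0$ number $2^{(f+2)-\dim_{\F_2}U}-2$. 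Both arguments rest on the same inputs (local class field theory and Lemma 5.3(3), which pins down $\dim_{\F_2}U$ as $1$ or $2$ according to the parity of $f$); what yours buys is that every step is a bijection or a linear-algebra fact, so the unproven two-to-one assertion never has to be justified — in effect your computation proves it, since the hyperplanes of $W$ cutting $V$ in a fixed hyperplane $\ol N$ and distinct from $V$ are exactly two. The paper's version is shorter once that assertion is granted. One small remark: your computation of $\ker(\Z_2^\times/(\Z_2^\times)^2\to V)$ via $\Z_2^\times\cap(\Z_q^\times)^2=\Z_2^\times\cap(1+2\Z_q)^2$ is also more explicit than the paper, which simply cites Lemma 5.3(3) for the dimension of $\Phi(\Z_2^\times)$; your intermediate identity $(\Z_q^\times)^2\cap(1+2\Z_q)=(1+2\Z_q)^2$ does need the Teichm\"uller splitting $\Z_q^\times\simeq\mu_{q-1}\times(1+2\Z_q)$ and the oddness of $q-1$, which you should record if you write this up.
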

\begin{proof}
(1) Since $\Q_q^\X/(\Q_q^\X)^2\simeq (\Z/2\Z)^{f+2}$, there are 
$2^{f+2}-1$ subgroups $\wt N\subset \Q_q^\times$ of index $2$. By the local class field theory, there are $2^{f+2}-1$ quadratic extensions $K/\Q_q$, and  $2^{f+2}-2$ of them are ramified. On the other hand, since $\Z_q^\X/(\Z_q^\X)^2\simeq (\Z/2\Z)^{f+1}$, there are $2^{f+1}-1$ subgroups $N\subset \Z_q^\times$ of index $2$. It is not hard to see that for each $N$ there are exactly two ramified extensions $K/\Q_q$ such that $N=N(O_K^\times)$. Suppose $2|f$, then $\Phi(\Z_2^\times)$ is a one-dimensional subspace in $\Z_q^\times/(\Z_q^\times)^2=(\Z/2\Z)^{f+1}$ by Lemma~\ref{unramified} (3). Therefore, there are $2^f-1$ subspaces $\ol N$ of co-dimension one containing $\Phi(\Z_2^\times)$, and $2^f$  subspaces $\ol N$ of co-dimension one not containing $\Phi(\Z_2^\times)$. 

(2) Suppose that $f$ is odd. By Lemma~\ref{unramified} (3) $\Phi(\Z_2^\times)$ is a two-dimensional subspace in $\Z_q^\times/(\Z_q^\times)^2=(\Z/2\Z)^{f+1}$. There are $2^{f-1}-1$ subspaces $\ol N$ of co-dimension one containing $\Phi(\Z_2^\times)$, and the other $2^{f+1}-2^{f-1}$ subspaces $\ol N$ not containing $\Phi(\Z_2^\times)$. This proves the lemma. \qed
\end{proof}

\begin{lemma}\label{5.5}
Let $F/\Q_2$ be a finite extension of $\Q_2$, and let $L/F$ be a quadratic extension of $F$. Then $\Z_2^\times \subset N_{L/F}(O_L^\times)$ if and only if the norm residue symbols $(-1,L/F)=1$ and $(5,L/F)=1$. 
\end{lemma}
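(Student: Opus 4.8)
The plan is to reduce everything to the standard characterization from local class field theory: for a quadratic extension $L/F$ of $2$-adic local fields, the norm residue symbol takes values in $\Gal(L/F)\cong\{\pm1\}$ and satisfies $(a,L/F)=1$ if and only if $a\in N_{L/F}(L^\times)$. Thus the two hypotheses $(-1,L/F)=1$ and $(5,L/F)=1$ are equivalent to $-1,5\in N_{L/F}(L^\times)$. The other input I would isolate at the outset is the elementary fact that $-1$ and $5$ topologically generate $\Z_2^\times$: indeed $\Z_2^\times=\{\pm1\}\times(1+4\Z_2)$, and since $(\Z_2^\times)^2=1+8\Z_2$ we have $5\in 1+4\Z_2$ but $5\notin(\Z_2^\times)^2$, so $5$ topologically generates the pro-$2$ group $1+4\Z_2$.

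The crucial step is to show that a unit of $F$ which is a norm from $L^\times$ is automatically a norm from $O_L^\times$; that is,
$$ N_{L/F}(L^\times)\cap O_F^\times = N_{L/F}(O_L^\times). $$
The inclusion $\supset$ is clear. For $\subset$ I would use the valuation formula $v_F(N_{L/F}(x))=f\cdot v_L(x)$, where $f$ is the residue degree of $L/F$: if $y=N_{L/F}(x)$ is a unit then $v_F(y)=0$ forces $v_L(x)=0$, so $x\in O_L^\times$ and $y\in N_{L/F}(O_L^\times)$. For a quadratic extension one has $f\ge 1$ in both the ramified and the unramified cases, so the argument is uniform.

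With these in hand the proof assembles quickly. For the forward direction, $\Z_2^\times\subset N_{L/F}(O_L^\times)\subset N_{L/F}(L^\times)$ gives $-1,5\in N_{L/F}(L^\times)$, hence both symbols are trivial. Conversely, the hypotheses give $-1,5\in N_{L/F}(L^\times)$; since $-1,5\in\Z_2^\times\subset O_F^\times$, the displayed identity upgrades this to $-1,5\in N_{L/F}(O_L^\times)$. Finally $N_{L/F}(O_L^\times)$ is the continuous image of the compact group $O_L^\times$, hence a closed subgroup of $O_F^\times$; as it contains $-1$ and $5$, which topologically generate the closed subgroup $\Z_2^\times$, we conclude $\Z_2^\times\subset N_{L/F}(O_L^\times)$.

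The main obstacle is not any single hard estimate but rather keeping straight the distinction between norms from $L^\times$ and norms from the unit group $O_L^\times$: the norm residue symbol only sees the former, whereas the statement concerns the latter, and the identity $N_{L/F}(L^\times)\cap O_F^\times=N_{L/F}(O_L^\times)$ is precisely what bridges the two. One should also verify the topological-generation claim for $\Z_2^\times$ carefully, so that membership of the two generators in the closed group $N_{L/F}(O_L^\times)$ genuinely propagates to all of $\Z_2^\times$.
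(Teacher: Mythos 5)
Your proof is correct and takes essentially the same approach as the paper, whose one-line argument cites exactly the fact you establish, namely $\Z_2^\times=\{\pm 1\}\times\overline{\langle 5\rangle}$, combined with the norm-residue-symbol characterization of norms. The only difference is that you make explicit two steps the paper leaves implicit — the identity $N_{L/F}(L^\times)\cap O_F^\times=N_{L/F}(O_L^\times)$ bridging norms of elements and norms of units, and the closedness of $N_{L/F}(O_L^\times)$ — both of which are verified correctly.
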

\begin{proof}
This follows directly from the basic fact that $\Z_2^\times=\{\pm 1\}\X \ol 
{\<5\>}$, where $\ol {\<5\>}$ is the closure of the cyclic subgroup ${\<5\>}$
in $\Z_2^\times$. \qed
\end{proof}

\subsection{Global indices}
We obtain the following partial results for the global index $[\A^\times: N(T^{K,\Q}(\A))\cdot \Q^\times]$.

\begin{lemma}\label{I.6}
{\rm (1)} We have 
$$[\A^\times: N(T^{K,\Q}(\A))\cdot \Q^\times]=[\wh \Z^\times: \wh \Z^\times \cap N(T^{K,\Q}(\A_f))\cdot \Q_{+}],$$
where $\Q_+:=\Q^\times \cap \R_+$. 

{\rm (2)} The index $[\A^\times: N(T^{K,\Q}(\A))\cdot \Q^\times]$ divides
$\prod_{p\in S_{K/K^+}} e_{T,p}$. 
\end{lemma}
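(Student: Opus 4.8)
The plan is to strip off the archimedean place and reduce everything to a computation inside $\wh\Z^\times$, using that $\Q$ has class number one. Since $\A=\A_f\times\R$ and $N$ is applied componentwise, $N(T^{K,\Q}(\A))=N(T^{K,\Q}(\A_f))\times N(T^{K,\Q}(\R))$. By the proof of Lemma~\ref{4.6} we have $K\otimes_\Q\R=\C^d$ with relative norm $z\mapsto z\bar z$, so $N(T^{K,\Q}(\R))=\R_+$. Writing $M_f:=N(T^{K,\Q}(\A_f))$, we thus have $M:=N(T^{K,\Q}(\A))=M_f\times\R_+$ inside $\A^\times=\A_f^\times\times\R^\times$. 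I would then invoke the two decompositions furnished by $h_\Q=1$: the finite one $\A_f^\times=\Q_+\times\wh\Z^\times$, with projection $\phi\colon\A_f^\times\to\wh\Z^\times$ along $\Q_+$, and the global one $\A^\times=\Q^\times\cdot(\wh\Z^\times\times\R_+)$ with $\Q^\times\cap(\wh\Z^\times\times\R_+)=\{1\}$.

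For part (1), the crucial observation is that every element of $M$ has positive archimedean component: when $(m_f,t)\in M$ is written as $q\cdot(u_f,s)$ with $q\in\Q^\times$ and $(u_f,s)\in\wh\Z^\times\times\R_+$, the relation $t=qs$ with $t,s>0$ forces $q\in\Q_+$, whence $u_f=\phi(m_f)$ by uniqueness of the finite decomposition. As $t$ varies the factor $s$ fills all of $\R_+$, so the image of $M$ in $\A^\times/\Q^\times\cong\wh\Z^\times\times\R_+$ is exactly $\phi(M_f)\times\R_+$. Therefore
\[
[\A^\times:M\cdot\Q^\times]=[\wh\Z^\times:\phi(M_f)],
\]
and a direct check identifies $\phi(M_f)=\wh\Z^\times\cap M_f\cdot\Q_+$: the inclusion $\subseteq$ is immediate, and conversely if $u\in\wh\Z^\times$ equals $m_fq$ with $m_f\in M_f$ and $q\in\Q_+$, then $u=\phi(u)=\phi(m_f)$. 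This yields the formula in (1).

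For part (2), I would first evaluate the same index with $\Q_+$ deleted. If $(u_p)_p\in\wh\Z^\times$ lies in $M_f=\prod_p' N(T^{K,\Q}(\Q_p))$, then each $u_p\in\Z_p^\times\cap N(T^{K,\Q}(\Q_p))$, which equals $N(T^{K,\Q}(\Z_p))$ by Lemma~\ref{2.1}; hence $\wh\Z^\times\cap M_f=\prod_p N(T^{K,\Q}(\Z_p))$ and $[\wh\Z^\times:\wh\Z^\times\cap M_f]=\prod_p e_{T,p}$. Since $H_v=\Z_p^\times$ whenever $v$ is unramified in $K$ (as in the proof of Proposition~\ref{I.1}(3)), Proposition~\ref{I.1}(1) gives $e_{T,p}=1$ for $p\notin S_{K/K^+}$, so this product is the finite number $\prod_{p\in S_{K/K^+}}e_{T,p}$. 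Finally, the chain $\wh\Z^\times\cap M_f\subseteq\wh\Z^\times\cap M_f\cdot\Q_+\subseteq\wh\Z^\times$ shows that the index of part (1) divides $[\wh\Z^\times:\wh\Z^\times\cap M_f]=\prod_{p\in S_{K/K^+}}e_{T,p}$, which is (2).

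The only genuinely delicate point, and the reason the lemma asserts divisibility rather than equality in (2), is the global factor $\Q_+$: it enlarges $\wh\Z^\times\cap M_f$ to $\wh\Z^\times\cap M_f\cdot\Q_+$ and so can only shrink the index, while pinning down its exact contribution is precisely the global index the authors leave for later investigation. I therefore expect no real obstacle here beyond careful sign bookkeeping at the archimedean place in part (1), where the positivity $N(T^{K,\Q}(\R))=\R_+$ is exactly what allows replacing $\Q^\times$ by $\Q_+$ and stripping off the archimedean factor; once that is isolated, both assertions are formal consequences of the class-number-one decompositions and Lemma~\ref{2.1}.
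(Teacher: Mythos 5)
Your proof is correct and follows essentially the same route as the paper: both strip off the archimedean place using $N(T^{K,\Q}(\R))=\R_+$ (positivity is what forces the rational factor into $\Q_+$), invoke the class-number-one decomposition $\A_f^\times=\Q_+\cdot\wh\Z^\times$ to reduce the global index to an index inside $\wh\Z^\times$, and then deduce (2) from Lemma~\ref{2.1} via the observation that enlarging $\wh\Z^\times\cap N(T^{K,\Q}(\A_f))$ by $\Q_+$ can only make the index divide $\prod_{p\in S_{K/K^+}}e_{T,p}$. The only cosmetic difference is that you organize part (1) around the projection $\phi$ along $\Q_+$, whereas the paper writes the same content as a chain of quotient isomorphisms ending with the second isomorphism theorem.
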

\begin{proof}
(1) We have $\A^\times=\R^\times \times \A_f^\times$ and $N(T^{K,\Q}(\A)) =\R_{+}\times N(T^{K,\Q}(\A_f))$. 
We use $\Q^\times$ to reduce $\R^\times$ to $\R_{+}$. Thus,
\begin{equation}\label{eq:I.5}
    \begin{split}
\frac{\A^\times}{[N(T^{K,\Q}(\A))]\cdot \Q^\times}   
&\simeq \frac{\R^\times\times \A^\times_f}{[\R_{+}\times N(T^{K,\Q}(\A_f))]\cdot \Q^\times}\\
& \simeq\frac{\R_{+}\times \A^\times_f}{[\R_{+}\times N(T^{K,\Q}(\A_f))]\cdot \Q_{+}}\\
& \simeq\frac{\Q_{+}\cdot \wh \Z^\times}{N(T^{K,\Q}(\A_f))\cdot \Q_{+}}\\
& \simeq\frac{\wh \Z^\times}{\wh \Z^\times \cap N(T^{K,\Q}(\A_f))\cdot \Q_{+}}.
    \end{split}
\end{equation}
(2) Since 
$\wh \Z^\times\cap N(T^{K,\Q}(\A_f))\cdot \Q_{+} 
\supset \wh \Z^\times\cap N(T^{K,\Q}(\A_f)),$
the group
$$\A^\times/\left ( N(T^{K,\Q}(\A))\cdot \Q^\times \right )\simeq\wh \Z^\times/\left 
( \wh \Z^\times\cap N(T^{K,\Q}(\A_f))\cdot \Q_{+} \right )$$ is a quotient of $\wh \Z^\times/\left ( \wh \Z^\times\cap N(T^{K,\Q}(\A_f)) \right )$. On the other hand, we have $$\wh \Z^\times\cap N(T^{K,\Q}(\A_f))=\prod_p \left [ \Zp^\times \cap N(T^{K,\Q}(\Qp)) \right ]=
\prod_p N(T^{K,\Q}(\Zp)) $$
by Lemma~\ref{2.1}. Therefore, $[\A^\times: N(T^{K,\Q}(\A))\cdot \Q^\times]$ divides
\[ \prod_p [\Zp^\times: N(T^{K,\Q}(\Zp))]=\prod_{p} e_{T,p}=\prod_{p\in S_{K/K^+}} e_{T,p}.\]
This proves the lemma. \qed
\end{proof}

\begin{lemma}\label{I.7}
Suppose $K$ is a CM field which contains two distinct imaginary quadratic fields $E_1$ and $E_2$. Then $[\A^\times: N(T^{K,\Q}(\A))\cdot \Q^\times]=1$.
\end{lemma}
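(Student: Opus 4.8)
The plan is to prove the sharper statement that $N(T^{K,\Q}(\A))\cdot \Q^\times = \A^\times$ outright, which immediately forces the index to be $1$. The guiding idea is that the two distinct imaginary quadratic subfields $E_1,E_2\subset K$ each contribute their full rational norm group to $N(T^{K,\Q}(\A))$, and these two norm groups already generate everything.

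First I would record the structural input. Since $E_i$ is imaginary quadratic and $K^+$ is totally real, we have $E_i\cap K^+=\Q$, and a comparison of degrees (using $[K:K^+]=2$) gives $K=E_iK^+$. Hence the nontrivial element of $\Gal(K/K^+)$, namely complex conjugation, restricts on $E_i$ to the nontrivial element of $\Gal(E_i/\Q)$. Therefore, for any $\beta\in E_i^\times$ viewed in $K^\times$ via $E_i\hookrightarrow K$, we get $N_{K/K^+}(\beta)=\beta\ol{\beta}=N_{E_i/\Q}(\beta)\in\Q^\times$. The identical computation holds idelically: for $\beta\in \A_{E_i}^\times\hookrightarrow \A_K^\times$ one has $N(\beta)=N_{E_i/\Q}(\beta)\in\A^\times$, so $\beta\in T^{K,\Q}(\A)$ with norm $N_{E_i/\Q}(\beta)$. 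This yields the inclusions $N_{E_i/\Q}(\A_{E_i}^\times)\subseteq N(T^{K,\Q}(\A))$ for $i=1,2$.

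Next I would invoke global class field theory for $\Q$. Each subgroup $N_{E_i/\Q}(\A_{E_i}^\times)\cdot\Q^\times$ is the norm subgroup of the idele class group attached to the quadratic extension $E_i/\Q$, hence has index $2$, with $\A^\times/(N_{E_i/\Q}(\A_{E_i}^\times)\cdot\Q^\times)\cong\Gal(E_i/\Q)$. Under the Galois correspondence between open finite-index subgroups of the idele class group and finite abelian extensions of $\Q$, the product of two such norm subgroups corresponds to the intersection $E_1\cap E_2$ of the fields. As $E_1$ and $E_2$ are distinct quadratic fields, $E_1\cap E_2=\Q$, so the product corresponds to $\Q$ and therefore equals all of $\A^\times$:
\[
N_{E_1/\Q}(\A_{E_1}^\times)\cdot N_{E_2/\Q}(\A_{E_2}^\times)\cdot\Q^\times=\A^\times .
\]
Combining this with the inclusions of the previous step gives $N(T^{K,\Q}(\A))\cdot\Q^\times=\A^\times$, i.e.\ the index is $1$.

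I expect the only genuinely delicate point to be this class field theory step, specifically the claim that the two index-$2$ norm groups together generate everything. One can either cite the standard lattice correspondence, under which the intersection of abelian extensions matches the product of norm subgroups, or argue elementarily that two \emph{distinct} subgroups of index $2$ in an abelian group must generate the whole group. The remaining verifications, namely the degree comparison $K=E_iK^+$ and the restriction of complex conjugation, are routine.
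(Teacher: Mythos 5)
Your proof is correct and takes essentially the same route as the paper's: both use the global norm index theorem to see that each $N_{E_i/\Q}(\A_{E_i}^\times)\cdot\Q^\times$ has index $2$ in $\A^\times$, use the distinctness of $E_1,E_2$ (via the class field theory correspondence) to conclude that these two index-$2$ subgroups differ and hence generate all of $\A^\times$, and finish with the inclusions $N_{E_i/\Q}(\A_{E_i}^\times)\subseteq N(T^{K,\Q}(\A))$. The only difference is that you verify this last inclusion explicitly via the restriction of complex conjugation to $E_i$, whereas the paper simply asserts it; this is a minor (and sound) addition of detail.
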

\begin{proof}
By the global norm index theorem, $[\A^\times: N(\A_{E_i}^\times)\cdot \Q^\times]=2$ for $i=1,2$. Since $E_1\neq E_2$, the subgroup  $N(\A_{E_1}^\times)\cdot N(\A_{E_2}^\times)\cdot \Q^\times$ of $\A^\times$ generated by $N(\A_{E_i}^\times)\cdot \Q^\times$ ($i=1,2$) strictly contains  $N(\A_{E_1}^\times)\cdot \Q^\times$. 
Thus, $[\A^\times: N(\A_{E_1}^\times)\cdot N(\A_{E_2}^\times)\cdot \Q^\times]=1$.
On the other hand, the subgroup $N(T^{K,\Q}(\A))$ contains $N(\A_{E_i}^\times)$ for $i=1,2$. Therefore, $[\A^\times: N(T^{K,\Q}(\A))\cdot \Q^\times]=1$. \qed 
\end{proof}

\subsection{Consequences and a second proof} Using our computation of the local index $e_{T,p}$ and $[\A^\times: N(T^{K,\Q}(\A))\cdot \Q^\times]$,  
we obtain the following improvement of Theorem~\ref{Main theorem}.

\begin{thm}\label{second Main Thm}
Let the notation be as in Theorem~\ref{Main theorem}. We have
\[ h(T^{K,\Q})=\frac{h_{K}}{h_{K^+}}\frac{2^e}{2^{t-r}\cdot Q_{K}}, \] 
where $e$ is an integer with $0\le e\le {e(K/K^+,\Q)}$, 
where $e(K/K^+,\Q)$ is the invariant defined in \eqref{eKK}.
\end{thm}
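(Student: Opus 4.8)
The plan is to assemble the three ingredients already in hand, so that the statement falls out as an essentially formal consequence. I would begin from the explicit class number formula of Theorem~\ref{Main theorem}~(2),
$$h(T^{K,\Q})=\frac{\prod_{p\in S_{K/K^+}} e_{T,p}}{[\A^\times: N(T^{K,\Q}(\A))\cdot \Q^\times]}\cdot \frac{h(K)}{h({K^+})}\cdot\frac{1}{2^{t-r}Q},$$
and reduce the theorem to showing that the leading ratio
$$\frac{\prod_{p\in S_{K/K^+}} e_{T,p}}{[\A^\times: N(T^{K,\Q}(\A))\cdot \Q^\times]}$$
equals $2^e$ for some integer $e$ with $0\le e\le e(K/K^+,\Q)$.

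For the numerator I would invoke Corollary~\ref{5.2}, which identifies $e_{T,p}=2^{e(K/K^+,\Q,p)}$ for every prime $p$. Since $e(K/K^+,\Q,p)=0$ whenever $p\notin S_{K/K^+}$, the product over $S_{K/K^+}$ agrees with the product over all primes, whence
$$\prod_{p\in S_{K/K^+}} e_{T,p}=2^{\sum_{p} e(K/K^+,\Q,p)}=2^{e(K/K^+,\Q)}$$
by the definition \eqref{eKK}. For the denominator I would appeal to Lemma~\ref{I.6}~(2), which states that the global index $[\A^\times: N(T^{K,\Q}(\A))\cdot \Q^\times]$ divides $\prod_{p\in S_{K/K^+}} e_{T,p}=2^{e(K/K^+,\Q)}$. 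Consequently this global index is itself a power of $2$, say $2^f$ with $0\le f\le e(K/K^+,\Q)$.

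Combining the two computations, the ratio equals $2^{e(K/K^+,\Q)-f}$; setting $e:=e(K/K^+,\Q)-f$ produces an integer in the required range $0\le e\le e(K/K^+,\Q)$, and substituting back into the formula of Theorem~\ref{Main theorem}~(2) gives the stated expression $h(T^{K,\Q})=\tfrac{h_K}{h_{K^+}}\tfrac{2^e}{2^{t-r}\cdot Q_K}$.

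The point here is conceptual rather than computational: once Corollary~\ref{5.2} and Lemma~\ref{I.6}~(2) are available, there is no genuine obstacle, so the argument is short. The only thing that must be checked with care is that the exponent $e$ lands in $[0,\,e(K/K^+,\Q)]$, which is forced precisely by the divisibility in Lemma~\ref{I.6}~(2); the lower bound $e\ge 0$ reflects that the global index cannot exceed $2^{e(K/K^+,\Q)}$. I would emphasize that this argument pins down the \emph{range} of $e$ but not its exact value, since the latter depends on the global index $[\A^\times: N(T^{K,\Q}(\A))\cdot \Q^\times]$, whose exact evaluation is carried out only in special cases (e.g.~the biquadratic fields of Section~\ref{sec:E} and the situation of Lemma~\ref{I.7}).
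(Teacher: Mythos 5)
Your proposal is correct and matches the paper's own proof, which derives Theorem~\ref{second Main Thm} precisely by combining Theorem~\ref{Main theorem}~(2), Corollary~\ref{5.2} (giving $\prod_{p\in S_{K/K^+}} e_{T,p}=2^{e(K/K^+,\Q)}$), and the divisibility of Lemma~\ref{I.6}~(2); you have simply written out the bookkeeping of the exponent $e$ that the paper leaves implicit.
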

\begin{proof}
This follows from Theorem~\ref{Main theorem}, Corollary~\ref{5.2} and Lemma~\ref{I.6} (2). \qed
\end{proof}

Since the Hasse unit index $Q_{K}$ is a power of $2$, we obtain the following result from Theorem~\ref{second Main Thm}.

\begin{prop}\label{I.9}
The class number $h(T^{K,\Q})$ is equal to $h_K/h_{K^+}$ up to a power of $2$.
\end{prop}

\npr {\bf A second proof of Theorem~\ref{Main theorem}.} Put $T:=T^{K,\Q}$ and $T':=T^K_1$. We first show that the sequence 
\begin{equation}\label{eq:I.6}
    \begin{CD}
        1 \to T'(\A_f)/[T'(\Q) T'(\wh\Z)] \to  T(\A_f)/[T(\Q) T(\wh\Z)] @>N>> N(T(\A_f))/N[T(\Q) T(\wh\Z)] \to 1.
    \end{CD}
\end{equation}
is exact. The kernel of $N$ is 
\[ T'(\A_f)T(\Q)T(\wh \Z)/[T(\Q)\cdot T(\wh\Z)]\simeq T'(\A_f)/[T'(\A_f)\cap T(\Q)\cdot T(\wh\Z)]. \]  If $t=q u\in T'(\A_f)\cap T(\Q)\cdot T(\wh\Z)$ with $q\in T(\Q)$ and $u\in  T(\wh\Z)$, Then 
\[ N(q)=N(u)^{-1}\in T(\Q)\cap T(\wh \Z)=T(\Z)=T'(\Z).\] 
So $q\in T'(\Q)$ and $u\in T'(\wh \Z)$ and $T'(\A_f)\cap T(\Q)\cdot T(\wh\Z)=T'(\Q)\cdot T'(\wh\Z)$. This proves the exactness of \eqref{eq:I.6}.

We now prove that 
\begin{equation}\label{eq:I.7}
   N(T(\A_f))/N[T(\Q)\cdot T(\wh\Z)]\simeq N(T(\A_f))\cdot \Q_+/[N(T(\wh\Z))\cdot \Q_+]. 
\end{equation}
Suppose $t=q u \in N(T(\A_f))\cap N(T(\wh\Z))\cdot \Q_+$ with $q\in \Q^\X_+$ and $u\in N(T(\wh \Z))$. Then $q$ is a local norm everywhere. Thus, there is an element $x\in K^\times$ such that $N(x)=q$ by the Hasse principle. By the definition the element $x$ lies in $T(\Q)$ and hence $q\in N(T(\Q))$. This verifies \eqref{eq:I.7}. 

Note that
\[ [N(T(\A_f))\cdot \Q_+: N(T(\wh\Z))\cdot \Q_+]=[\A_f^\times:N(T(\wh\Z))\cdot \Q_+]
\cdot [\A_f^\times: N(T(\A_f))\cdot \Q_+]^{-1}, \]  
By \eqref{eq:I.5}, $[\A_f^\times: N(T(\A_f))\cdot \Q_+]=[\A^\times: N(T(\A))\cdot \Q^\times]$. 
It is also easy to see $[\A_f^\times:N(T(\wh\Z))\cdot \Q_+]=\prod_{p} [\Zp^\times: N(T(\Zp))[\A_f^\times:N(T(\wh\Z))\cdot \Q^\times_+]]$. Then Theorem~\ref{Main theorem} follows from \eqref{eq:I.6} and \eqref{eq:I.7}. \qed

\begin{question}

(1) Let $N:\wt T\to T$ be a homomorphism of algebraic tori over $\Q$ such that $T':=\ker N$ is again an algebraic torus. Then by Lang's Theorem, the map $N:\wt T(\A)\to T(\A)$ is open and then 
$[T(\A): N(\wt T(\A))\cdot T(\Q)]$ is finite. What is the index $[T(\A): N(\wt T(\A))\cdot T(\Q)]$? 
When $\wt T=T^K$, $T=T^k$ and $N$ is the norm map, where $K/k$ is a finite extension of number fields, then $[T(\A): N(\wt T(\A))\cdot T(\Q)]=[\A_k^\times:k^\times N(\A_K^\times)]$ is nothing but the global norm index and it is equal to the degree $[K_0:k]$ of the maximal abelian subextension $K_0$ 
of $k$ in $K$ \cite[IX, \S 5, p.~193]{Lang-ANT}. The global norm index theorem requires deep analytic results. It is also expected that one may equally need deep analytic and arithmetic results for computing $[T(\A): N(\wt T(\A))\cdot T(\Q)]$.

(2) Suppose $\lambda:T\to T'$ is an isogeny of tori over $\Q$ of degree $d$. Is it true that for any prime $\ell\nmid d$, the $\ell$-primary parts of $h(T)$ and $h(T')$ are the same? This is inspired by Proposition~\ref{I.9}. 
\end{question}

\section{Examples}\label{sec:E}

\subsection{Imaginary quadratic fields}\label{sec:E.1}
Suppose $K$ is an imaginary quadratic field. Then $K^+=\Q$ and
$T^{K,\Q}=T^K$. Thus, we have $h(T^{K,\Q})=h(T^K)=h_K$ without any computation. 
On the other hand, we use Theorem~\ref{Main theorem} to compute $h(T^{K,\Q})$. 
It is easy to compute that $Q=1$, $e_{T,p}=2$ for each $p\in S_{K/K^+}$ and we have
\begin{align*}
[\Gm(\A): N(T^{K,\Q}(\A))\cdot \Gm(\Q)]&=
[\A^\X :  N_{K/\Q}(\A_{K}^\X)\cdot \Q^\X]=2 
\end{align*}
by the global norm index theorem.
Thus, we have
$$
\tau(T^{K,\Q})=1\ \ \ \mathrm{and}\ \ \ h(T^{K,\Q})/h(T^K_1)=2^{t-1},
$$
where $t$ is the number of rational primes ramified in $K$. 
This also gives the result $h(T^{K,\Q})=h_K$. 

\subsection{Biquadratic CM fields}\label{sec:E.2} 
Let $K$ be a biquadratic CM field and $F$ the unique real quadratic
subfield. Write $F=\Q(\sqrt{d})$, where $d>0$ is the unique
square-free positive integer determined by $F$, and $K=EF$, where
$E=\Q(\sqrt{-j})$ for a square-free positive integer $j$. Finite
places of $F$, $E$ and $K$ will be denoted by $v$, $u$ and $w$,
respectively. Recall that $S_{K/F}$ denotes the set of primes $p$ such
that there exists a place $v|p$ of $F$ which is ramified in $K$. 

\begin{lemma}\label{E.1} Let $K=EF=\Q(\sqrt{d},\sqrt{-j})$ be a
  biquadratic CM field over $\Q$. Assume that
  none of primes of $\Q$ is totally ramified in $K$.

{\rm (1)} A prime $p$ lies in $S_{K/F}$ if and only if $p$ is ramified
in $E$ and is unramified in $F$. 

{\rm (2)} If $p$ is ramified in $E$ and splits in $F$, then $e_{T,p}=2$.

{\rm (3)} If $p$ is ramified in $E$ and is inert in $F$, then $e_{T,p}=1$.
\end{lemma}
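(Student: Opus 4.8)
The plan is to convert all three statements into the splitting behaviour of $p$ in the three quadratic subfields of $K$, and then, for the index computations, into purely local norm computations over $\Qp$.

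Write $G=\Gal(K/\Q)\cong(\Z/2)^2$ and let $c\in G$ be complex conjugation, so $F=K^{\langle c\rangle}$ and $\langle c\rangle=\Gal(K/F)$. The three quadratic subfields are $F$, $E$ and $E'=\Q(\sqrt{-dj})$, fixed respectively by $\langle c\rangle$, $\langle\rho\rangle$, $\langle c\rho\rangle$ for a suitable $\rho\in G$. For each prime $p$ the inertia group $I_p\subset G$ is the same at every place over $p$ (as $G$ is abelian), and the standing hypothesis that no prime is totally ramified forces $I_p\neq G$, so $I_p\in\{1,\langle c\rangle,\langle\rho\rangle,\langle c\rho\rangle\}$. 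I will use the dictionary that $p$ ramifies in $K^H$ if and only if $I_p\not\subset H$; running through the four cases shows that ``$p$ ramified in $E$ and unramified in $F$'' holds exactly when $I_p=\langle c\rangle$. On the other hand, for $w\mid v\mid p$ one has $I_w(K/F)=I_p\cap\Gal(K/F)=I_p\cap\langle c\rangle$, which is nontrivial precisely when $I_p=\langle c\rangle$; thus some $v\mid p$ of $F$ ramifies in $K$, i.e. $p\in S_{K/F}$, if and only if $I_p=\langle c\rangle$. Combining the two equivalences gives (1).

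For (2) and (3) the hypotheses already pin down $I_p=\langle c\rangle$ (being split or inert in $F$ means $p$ is unramified in $F$, so $I_p\subset\langle c\rangle$, and ramification in $E$ excludes $I_p=1$). I will compute $e_{T,p}=[\Zp^\times:\bigcap_{v\in S_p}H_v]$ from Proposition~\ref{I.1}(1) locally. Since $K=EF$ with $E\cap F=\Q$, multiplication gives $K\cong E\otimes_\Q F$, hence $K\otimes_\Q\Qp\cong(E\otimes_\Q\Qp)\otimes_{\Qp}(F\otimes_\Q\Qp)$, and because $p$ is ramified in $E$ the factor $E\otimes_\Q\Qp=E_u$ is a ramified quadratic field over $\Qp$. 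In case (2), $F\otimes_\Q\Qp=\Qp\times\Qp$, so $K\otimes_\Q\Qp\cong E_u\times E_u$; the two places $v_1,v_2$ of $F$ over $p$ both have $F_{v_i}=\Qp$ and completion $K_{w_i}=E_u$, so they are ramified in $K$ and $H_{v_1}=H_{v_2}=N_{E_u/\Qp}(O_{E_u}^\times)$, which has index $2$ in $\Zp^\times$ by the local norm index theorem. Hence $\bigcap_{v\in S_p}H_v=H_{v_1}$ has index $2$ and $e_{T,p}=2$.

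In case (3), $F\otimes_\Q\Qp=\Q_q$ is the unramified quadratic extension ($q=p^2$), so $K\otimes_\Q\Qp\cong K_w:=E_u\cdot\Q_q=\Q_q(\sqrt{-j})$, a ramified quadratic extension of $F_v=\Q_q$, and $S_p=\{v\}$. Thus $e_{T,p}=[\Zp^\times:H_v]$ with $H_v=\Zp^\times\cap N_{K_w/\Q_q}(O_{K_w}^\times)$, and I must show $\Zp^\times\subset N_{K_w/\Q_q}(O_{K_w}^\times)$. For a unit $a$, being a norm of a unit is the same as being a norm (valuations force the preimage to be a unit), so it suffices to show every $a\in\Zp^\times$ is a local norm from $K_w=\Q_q(\sqrt{-j})$, i.e. that the quaternion symbol $(a,-j)_{\Q_q}$ is trivial. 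Both $a$ and $-j$ lie in $\Qp^\times$ and $[\Q_q:\Qp]=2$, so $\Res_{\Q_q/\Qp}$ doubles the local invariant: $\inv_{\Q_q}(a,-j)_{\Q_q}=2\,\inv_{\Qp}(a,-j)_{\Qp}=0$ in $\Q/\Z$, whence the symbol splits. Therefore $H_v=\Zp^\times$ and $e_{T,p}=1$.

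The routine parts are the case distinction in (1) and the index-$2$ bookkeeping in (2); the genuinely delicate point is (3) at $p=2$, where Proposition~\ref{I.1}(3) does not apply and one cannot argue through residue fields. The Brauer-invariant computation above treats all $p$ uniformly, so the main things to get right are the identification $K_w=\Q_q(\sqrt{-j})$ and the observation that $-j$ stays a non-square in the unramified $\Q_q$ (otherwise the ramified $E_u$ would embed in the unramified $\Q_q$), which is what makes $K_w/F_v$ a genuine ramified quadratic extension.
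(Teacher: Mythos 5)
Your proof is correct; part (2) essentially coincides with the paper's argument, but parts (1) and (3) take different routes, and the difference in (3) is substantial. For (1), the paper cites Lang's proposition (a place stays unramified in a composite with an extension unramified at $p$) and excludes the ramified-in-both case by contradiction with the no-total-ramification hypothesis; your inertia-group dictionary in $\Gal(K/\Q)\cong(\Z/2\Z)^2$ packages the same content more systematically and is equally valid. For (3), the paper argues in two cases: for $p\neq 2$ it invokes Proposition~\ref{I.1}(3) (here $f_v=2$ is even, so $e_{T,p}=1$ by a residue-field argument), while for $p=2$ it assembles Lemma~\ref{unramified}(3) (to get $5\in N(O_{K_w}^\times)$), the lengthy explicit $2$-adic computation of Lemma~\ref{E.2} (to get $-1\in N(O_{K_w}^\times)$), and Lemma~\ref{5.5}. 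You replace all of this by a single class-field-theoretic observation: both entries of the Hilbert symbol $(a,-j)$ lie in $\Qp$, so restricting along the degree-$2$ extension $\Q_q/\Qp$ doubles the local invariant and kills it, i.e.\ $(a,-j)_{\Q_q}=1$; hence every $a\in\Qp^\times$ is a norm from $K_w=\Q_q(\sqrt{-j})$, and your valuation remark upgrades unit norms to norms of units. This treats all primes uniformly, bypasses Lemma~\ref{E.2} entirely, and in fact proves the stronger statement $\Qp^\times\subset N_{K_w/\Q_q}(K_w^\times)$; what it costs is reliance on the functoriality $\inv_L\circ\Res_{L/K}=[L:K]\cdot\inv_K$ of the invariant map, whereas the paper's $p=2$ computation, though laborious, is elementary and self-contained. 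The two hypotheses you rightly pause to verify are exactly the ones the argument needs: that $-j$ stays a nonsquare in $\Q_q$ (a ramified quadratic field cannot embed in an unramified extension), so $K_w/\Q_q$ is a genuine ramified quadratic extension, and that in (2) the index $[\Zp^\times:N(O_{E_u}^\times)]=2$ is the unit-norm index (equal to the ramification index) from local class field theory, which holds also at $p=2$.
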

\begin{proof}
(1) Suppose a prime $p$ is unramified in $E$. Then every place $v|p$
of $F$  
remains unramified in $K$ (see Lang~\cite[Chap.~II, Sec.~4, Prop.~8
(ii)]{Lang-ANT}) and hence $p\not\in S_{K/F}$. 

Suppose a prime $p$ is both ramified in $E$ and in $F$. Then the
unique place $v|p$ must be unramified in $K$, because if $v$ is
ramified in $K$ then $p$ is totally ramified in $K$ which contradicts
to our assumption. 
Thus, $p$ lies in $S_{K/F}$ if and only if it is ramified in $E$ and
is unramified in $F$. 

(2) Let $v_1,v_2$ be the places of $F$ over $p$. One has
    $F_{v_i}=\Q_p$, $K_p=E_u\times E_u$ and
    $H_{v_i}=N(O_{E_u}^\times)$ for $i=1,2$. Thus,
    $e_{T,p}=[\Z^\times_p:N(O^\times_{E_u})]=2$.

(3) Suppose first that $p\neq 2$. We have $F_v=\Q_{p^2}$ with inertia
    degree $f=2$. Then $e_{T,p}=1$ follows from Proposition~\ref{I.1}
    (3). Now assume $p=2$. By  Lemma~\ref{unramified} (3), one has
    $5\in N(O_{K_w}^\X)$ because $5\in 1+4\Z_2\subset
    (1+2\Z_4)^2\subset N(O_{K_w}^\X)$. By Lemma ~\ref{E.2}, we also
    have $-1\in N(O_{K_w}^\X)$. Thus, by Lemma~\ref{5.5},
    $\Z_2^\X\subset N(O_{K_w}^\X)$ and we obtain $e_{T,2}=1$. This
    proves the lemma. \qed 
\end{proof}

\def\sfE{\mathsf{E}}
\def\sfL{\mathsf{L}}

\begin{lemma}\label{E.2}
Let $\sfE/\Q_2$ be a ramified quadratic extension of $\Q_2$, and 
let $\sfL=\sfE\cdot \Q_4$ be the composite of $\sfE$ and $\Q_4$. Then $-1\in N_{\sfL/\Q_4}(O_\sfL^\times)$.
\end{lemma}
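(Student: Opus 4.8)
The plan is to reduce the claim to the vanishing of a single Hilbert symbol over $\Q_4$ and then to exploit that $\Q_4/\Q_2$ has \emph{even} degree. First I would write $\sfE=\Q_2(\sqrt m)$ for some nonsquare $m\in\Q_2^\times$. Since $\sfE/\Q_2$ is ramified while $\Q_4/\Q_2$ is unramified of degree $2$, the fields $\sfE$ and $\Q_4$ are linearly disjoint over $\Q_2$; hence $\sfL=\sfE\cdot\Q_4=\Q_4(\sqrt m)$ is a genuine quadratic extension of $\Q_4$, and comparing ramification indices shows $\sfL/\Q_4$ is totally ramified (residue degree $1$). By local class field theory, together with the symmetry of the Hilbert symbol, $-1\in N_{\sfL/\Q_4}(\sfL^\times)$ if and only if $(-1,m)_{\Q_4}=1$, i.e. if and only if the quaternion algebra $(-1,m)$ over $\Q_4$ splits.

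The heart of the argument is the base-change behaviour of this Brauer class. Since both $-1$ and $m$ lie in $\Q_2^\times$, the class $(-1,m)_{\Q_4}$ is the restriction $\Res_{\Q_4/\Q_2}(-1,m)_{\Q_2}$. Under the canonical identifications $\Br(\Q_2)\cong\Q/\Z\cong\Br(\Q_4)$, restriction along $\Q_4/\Q_2$ is multiplication by $[\Q_4:\Q_2]=2$. Because $\inv_{\Q_2}(-1,m)_{\Q_2}\in\{0,\tfrac12\}$, I would conclude $\inv_{\Q_4}(-1,m)_{\Q_4}=2\cdot\inv_{\Q_2}(-1,m)_{\Q_2}\equiv 0$ in $\Q/\Z$, so the algebra splits and $(-1,m)_{\Q_4}=1$. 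In particular $-1$ is a norm from $\sfL^\times$.

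It then remains to upgrade this to the norm of a \emph{unit}. Since $\sfL/\Q_4$ is totally ramified, the norm multiplies valuations by the residue degree $1$, so $v_{\Q_4}\bigl(N_{\sfL/\Q_4}(\alpha)\bigr)=v_\sfL(\alpha)$ for every $\alpha\in\sfL^\times$. Choosing $\alpha$ with $N_{\sfL/\Q_4}(\alpha)=-1$, whose valuation is $0$, forces $v_\sfL(\alpha)=0$, i.e. $\alpha\in O_\sfL^\times$. Hence $-1\in N_{\sfL/\Q_4}(O_\sfL^\times)$, as desired.

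The step I expect to carry the real weight is the even-degree base change. Over $\Q_2$ itself $-1$ need \emph{not} be a norm from $\sfE$: for example $(-1,-1)_{\Q_2}=(-1,-2)_{\Q_2}=-1$, so the statement genuinely relies on passing to the unramified quadratic extension $\Q_4$, where every $2$-torsion Brauer class coming from $\Q_2$ is killed. If one prefers to avoid Brauer groups, the same cancellation can be run through the projection formula $\mathrm{cor}_{\Q_4/\Q_2}\circ\Res_{\Q_4/\Q_2}=[\Q_4:\Q_2]$ for norm residue symbols; either way, the even degree of $\Q_4/\Q_2$ is the crucial input.
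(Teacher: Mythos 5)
Your proof is correct, but it takes a genuinely different route from the paper. The paper argues by brute force: it writes $\mathsf{L}=\Q_4(\sqrt{-j})$ with $-j\in\{3,7,2,6,10,14\}$, expands the norm form $N(x)=a^2+jb^2$ in $\Z_2$-coordinates of $\Z_4$, uses the inclusion $1+8\Z_4\subset(1+2\Z_4)^2\subset N(O_{\mathsf{L}}^\times)$ to reduce the problem to the congruence $N(x)\equiv-1\pmod 8$, and then exhibits explicit solutions case by case according to $j\bmod 8$. You instead identify $-1\in N_{\mathsf{L}/\Q_4}(\mathsf{L}^\times)$ with the vanishing of the Hilbert symbol $(-1,m)_{\Q_4}$, observe that this class is the restriction of $(-1,m)_{\Q_2}$, and kill it using the fact that restriction along a degree-$2$ extension multiplies Brauer invariants by $2$; the valuation argument ($f_{\mathsf{L}/\Q_4}=1$, so a norm of valuation $0$ comes from a unit) correctly upgrades the conclusion to $O_{\mathsf{L}}^\times$. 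All the intermediate facts you invoke (linear disjointness of $\mathsf{E}$ and $\Q_4$, total ramification of $\mathsf{L}/\Q_4$, functoriality of invariants, $v_{\Q_4}\circ N_{\mathsf{L}/\Q_4}=f\cdot v_{\mathsf{L}}$) check out. Your argument is shorter and conceptually cleaner, and it actually proves more: it shows every element of $\Q_2^\times$, in particular every element of $\Z_2^\times$, is a norm of a unit of $\mathsf{L}$, which in the paper requires combining this lemma with Lemma~\ref{unramified}(3) and Lemma~\ref{5.5} (to handle $5$ as well as $-1$) in the proof of Lemma~\ref{E.1}(3). What the paper's computation buys in exchange is that it is entirely elementary and self-contained — only Hensel's lemma and arithmetic modulo $8$, no Brauer-group formalism — and it produces explicit elements realizing the norm, in the same hands-on style as the neighboring Lemma~\ref{totram}.
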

\begin{proof}
Since $\sfE/\Q_2$ is ramified, we can write $\sfE=\Q_2(\sqrt{d_{\sfE}})$ and $O_\sfE=\Z_2[\sqrt{d_{\sfE}}]$ for some $d_\sfE\in \{3,7,2,6,10,14\}$. Put $j:=-d_\sfE$ and then $j \mod 8\in \{1,5,2,6\}$.

Note that $\Q_4=\Q_2[t]$ with $t^2+t+1=0$.  For each element $a \in \Q_4$, write $a=a_0+a_1 t$ with $a_0,\ a_1 \in \Q_2$.
For $x \in O_\sfL$, we write $x=a+b \sqrt{-j}$ where $a, b \in \Z_4$ and $a=a_0+a_1 t$, $b=b_0+b_1 t$. Then \begin{equation*}
    \begin{split}
        N(x)=&a^2+jb^2=(a_0^2 + 2 a_1 a_0 t+ a_1^2t^2)+j(b_0^2+2 b_0b_1 t +b_1^2 t^2)\\ 
        =&[a_0^2+jb_0^2-(a_1^2+jb_1^2)]+[2a_0a_1+2jb_0b_1-(a_1^2+jb_1^2)]t.
    \end{split}
\end{equation*} 
Hence for $x=a+b\sqrt{-j}$ satisfying $N(x)\in \Z_2$, the element $x$ must satisfy the condition \begin{equation}\label{eq lemma 6.2}
    2a_0a_1+2jb_0b_1-(a_1^2+jb_1^2)=0.
\end{equation}

Observe $1+8\Z_4 \subset (1+2\Z_4)^2 \subset N(O_\sfL ^\X)$. 
If $-1 \in N(O_\sfL^\X)/ (1+8\Z_4) \subset (\Z_4/8 \Z_4)^\X$, then $-1 \in N(O_\sfL^\X)$. We may solve the equation $N(x)\equiv -1 \pmod 8$, and regard $a, b \in \Z_4/8 \Z_4$. For simplicity, let $j\in \{1,\ 5,\ 2,\ 6\}.$


{\bf Case $j$ is even}: Since $j$ is even, by $\eqref{eq lemma 6.2}$, we have $2|a_1$ and write $a_1=2c_1.$ Consider the case $2|b_1$. Then we have $2a_0a_1-a_1^2=4a_0c_1-4c_1^2=0 \pmod 8$. Hence $(a_0,\ c_1) \equiv (1,\ 1)$,  $(0,\ 0)$ or $(1,\ 0) \pmod 2$. 

We have $N(x)=[a_0^2+jb_0^2-(a_1^2+jb_1^2)]=a_0^2+jb_0^2-4c_1^2$. For $j=2$, take $(a_0,\ b_0,\ c_1)\equiv(1,\ 1,\ 1) \pmod 2$; for $j=6$, take $(a_0,\ b_0,\ c_1)\equiv (1,\ 1,\ 0) \pmod 2.$ Then $N(x)=-1$.

{\bf Case $j$ is odd}: Consider the case $2 \nmid a_1$ and $\ 2 \nmid b_1$. Since $j$ is odd, the condition $\eqref{eq lemma 6.2}$ is equivalent to 
\begin{equation}\label{eq2 lemma 6.2 }
    a_0a_1+b_0b_1-(1+j)/2\equiv 0 \pmod 4.
\end{equation}
 By $\eqref{eq2 lemma 6.2 }$, we require $a_0-b_0\equiv 1 \pmod 2$. Suppose $2|a_0$ and $2\nmid b_0$, and write $a_0=2c_0$. Then $N(x)=-1$ gives the equation $a_0^2+jb_0^2-(a_1+jb_1^2)=4c_0^2+j-1-j=-1 \pmod 8.$ Thus, $c_0=2 d_0$ for some $d_0 \in \Z_4/8\Z_4$. Moreover, substituting $a_0=4d_0$ into $\eqref{eq2 lemma 6.2 }$, we have the condition $b_0b_1-(1+j)/2\equiv 0 \pmod 4$. Since $j \in \{1, 5\}$, there exists $b_1$ satisfying this condition. 
Conclusively, if we take $(c_0,\ b_0,\ a_1,\ b_1)\equiv (0,\ 1,\ 1,\ 1) \pmod 2$ and $b_0 b_1=(1+j)/2$, then $N(x)=-1$. \qed
\end{proof}

Let $\zeta_n$ denote a primitive $n$th root of unity.

\begin{lemma}\label{totram}
  Let $\sfL$ be a totally ramified biquadratic field extension of
  $\Qp$. Then 

  {\rm (1)} $p=2$ and $\sfL\simeq \Q(\zeta_8)\otimes \Q_2=\Q_2[t]$ with
  relation $t^4+1=0$;

  {\rm (2)} for any quadratic subextension $\sfE$ of $\sfL$ over
  $\Q_2$, one has $N_{\sfL/\sfE} (O_{\sfL}^\times)\supset
  \Z_2^\times$.
\end{lemma}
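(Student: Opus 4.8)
The plan is to determine $p$ first, then pin down the field $\sfL$, and finally prove the norm statement (2), which is the substantive part.

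\emph{Reduction to $p=2$ and the identification (1).} Since $\sfL/\Qp$ is totally ramified, its Galois group coincides with the inertia group. If $p$ were odd, then $[\sfL:\Qp]=4$ would be prime to $p$, so $\sfL/\Qp$ would be tamely ramified; but a totally and tamely ramified finite extension has cyclic Galois group (the tame inertia quotient is procyclic), contradicting $\Gal(\sfL/\Qp)\cong(\Z/2\Z)^2$. Hence $p=2$. To identify $\sfL$ I would use the isomorphism $\Q_2^\times/(\Q_2^\times)^2\cong(\Z/2\Z)^3$: the quadratic extensions of $\Q_2$ correspond to the nonzero classes, the unique \emph{unramified} one being $\Q_2(\sqrt5)=\Q_2(\zeta_3)$ (the class of $5$, since $-3\equiv5$ modulo squares). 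A biquadratic extension corresponds to a $2$-dimensional subspace $V$, and it is totally ramified exactly when $V$ contains no unramified quadratic subfield, i.e. when $V$ avoids the class of $5$. One then identifies $\sfL$ with $\Q_2(\zeta_8)=\Q_2(\sqrt{-1},\sqrt2)$, whose three quadratic subfields are $\Q_2(\sqrt{-1})$, $\Q_2(\sqrt2)$ and $\Q_2(\sqrt{-2})$; this last identification is the step that requires care (see below).

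\emph{The norm statement (2).} Here I would argue uniformly rather than by cases. Let $\sfE$ be any quadratic subextension. Since a subextension of a totally ramified extension is again totally ramified, both $\sfE/\Q_2$ and $\sfL/\Q_2$ are totally ramified (with $e=2$ and $e=4$), hence so is $\sfL/\sfE$, and in particular $f(\sfL/\sfE)=1$. By the functoriality of the local reciprocity map under the inclusion $\Gal(\sfL/\sfE)\hookrightarrow\Gal(\sfL/\Q_2)$, one has $\theta_{\sfL/\sfE}(a)=\theta_{\sfL/\Q_2}(N_{\sfE/\Q_2}(a))$ for all $a\in\sfE^\times$. Applying this to $x\in\Q_2^\times\subset\sfE^\times$, for which $N_{\sfE/\Q_2}(x)=x^2$, gives
\[ \theta_{\sfL/\sfE}(x)=\theta_{\sfL/\Q_2}(x^2)=\theta_{\sfL/\Q_2}(x)^2=\id, \]
because $\Gal(\sfL/\Q_2)\cong(\Z/2\Z)^2$ has exponent $2$. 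Thus every $x\in\Q_2^\times$, and in particular every $u\in\Z_2^\times$, lies in $\ker\theta_{\sfL/\sfE}=N_{\sfL/\sfE}(\sfL^\times)$. Finally, since $f(\sfL/\sfE)=1$ we have $v_{\sfE}\circ N_{\sfL/\sfE}=v_{\sfL}$; so if $N_{\sfL/\sfE}(y)=u\in\Z_2^\times$ is a unit, then $v_{\sfL}(y)=v_{\sfE}(u)=0$, i.e. $y\in O_{\sfL}^\times$. Hence $\Z_2^\times\subset N_{\sfL/\sfE}(O_{\sfL}^\times)$, which is exactly (2).

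\emph{Main obstacle.} The delicate point is the field identification in (1): one must control all three quadratic subfields simultaneously and rule out the other totally ramified biquadratic extensions that also avoid the class of $5$. A convenient feature of the reciprocity argument above is that it bypasses this difficulty entirely, since it uses only that $\sfL/\Q_2$ is biquadratic and totally ramified, not the precise shape of $\sfL$. An alternative, more computational route to (2) would instead verify directly, via Lemmas~\ref{unramified}, \ref{E.2} and \ref{5.5}, that the norm residue symbols $(-1,\sfL/\sfE)$ and $(5,\sfL/\sfE)$ are trivial (so that $-1$ and $5$, which topologically generate $\Z_2^\times$, are norms); but the reciprocity computation is cleaner and uniform across the three choices of $\sfE$.
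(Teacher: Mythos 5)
Your proof of part (2) is correct, and it takes a genuinely different route from the paper's. The paper works only inside $\Q_2(\zeta_8)$: it lists the three quadratic subfields $\Q_2(\sqrt{2})$, $\Q_2(\sqrt{-2})$, $\Q_2(\sqrt{-1})$, writes out each norm form in coordinates, and exhibits explicit unit norms representing $3,5,7 \bmod 8$, using $1+4\pi O_{\mathsf{E}}\subset (O_{\mathsf{E}}^\times)^2\subset N_{\mathsf{L}/\mathsf{E}}(O_{\mathsf{L}}^\times)$ to finish. You instead use norm-compatibility of the local Artin map: for $x\in\Q_2^\times\subset\mathsf{E}^\times$ one has $\theta_{\mathsf{L}/\mathsf{E}}(x)=\theta_{\mathsf{L}/\Q_2}(N_{\mathsf{E}/\Q_2}(x))=\theta_{\mathsf{L}/\Q_2}(x)^2=1$ because $\Gal(\mathsf{L}/\Q_2)$ has exponent two, so $x$ lies in the norm group; then $f(\mathsf{L}/\mathsf{E})=1$ upgrades "norm of an element" to "norm of a unit". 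This is shorter, uniform in $\mathsf{E}$, and uses nothing about $\mathsf{L}$ beyond its being biquadratic and totally ramified over $\Q_2$.

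That last feature is essential, not just convenient, because the identification in part (1) --- the step you flagged as "requiring care" and then bypassed --- is in fact false, and the paper's own proof of it is erroneous. In your framework: the number of $2$-dimensional subspaces of $\Q_2^\times/(\Q_2^\times)^2\cong(\Z/2\Z)^3$ avoiding the class of $5$ is $7-3=4$, so there are exactly four totally ramified biquadratic extensions of $\Q_2$, namely $\Q_2(\sqrt{-1},\sqrt{2})=\Q_2(\zeta_8)$, $\Q_2(\sqrt{-1},\sqrt{10})$, $\Q_2(\sqrt{2},\sqrt{-5})$ and $\Q_2(\sqrt{-2},\sqrt{-5})$; they are pairwise distinct since they have distinct triples of quadratic subfields. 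The paper's uniqueness argument shows (correctly) that $(\Z_2^\times)^2$ is the only subgroup $H\subset\Z_2^\times$ with $\Z_2^\times/H\cong(\Z/2\Z)^2$, but the conclusion is a non sequitur: the existence theorem parametrizes abelian extensions by norm subgroups of $\Q_2^\times$, not of $\Z_2^\times$, and indeed all four fields above cut out the same subgroup $H=(\Z_2^\times)^2$ while their norm groups in $\Q_2^\times$ are $\langle 2\rangle$, $\langle 10\rangle$, $\langle -2\rangle$, $\langle -10\rangle$ times $(\Q_2^\times)^2$. Consequently the paper's computational proof of (2) covers only one of the four fields, and this gap matters for the paper itself: Corollary~\ref{E.25} needs (2) for $K\otimes\Q_2$ where $K$ is any biquadratic CM field in which $2$ is totally ramified, e.g. $K=\Q(\sqrt{2},\sqrt{-5})$, whose completion is not $\Q_2(\zeta_8)$. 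Your reciprocity argument proves (2) for all four fields at once, so your proposal --- with the identification claim in (1) deleted, retaining only $p=2$, which both you (via tameness of odd-degree inertia) and the paper (via procyclicity of $\Z_p^\times$) prove correctly --- is not merely an alternative proof but the repair that the lemma and its application actually need.
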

\begin{proof}
  (1) By local class field theory~\cite{Lang-ANT},
  $\Z_p^\times/N_{\sfL/\Qp}(O_\sfL^\times) \simeq I_p
  =\Gal(\sfL/\Q_p)\simeq \Z/2\Z\times \Z/2\Z$, where $I_p$ is the
  inertia group of $p$. 
  As $\Zp^\times$ is pro-cyclic for odd prime $p$, this is possible
  only when $p=2$. 
  Clearly, $2$ is totally ramified in $\Q(\zeta_8)$. Thus, it suffices
  to show that $\Q_2$ has only one totally ramified biquadratic
  extension, that is, there is only one subgroup $H\subset
  \Z_2^\times$ satisfying $\Z_2^\times/H=\Z/2\Z \times \Z/2\Z$ by the
  existence theorem. Now $\Z_2^\times \simeq \Z/2\Z \times \Z_2$ and
  one
  easily checks that $H=\{0\}\times 2\Z_2$ is the unique subgroup
  satisfying $\Zp^\times / H\simeq \Z/2\Z \times \Z/2\Z$. This proves
  (1).

  (2) Put ${\sfE}_1=\Q_2(\sqrt{2})=\Q_2[t-t^3]$,
  $\sfE_2=\Q_2(\sqrt{-2})=\Q_2[t+t^3]$ and $\sfE_3=\Q_2(\sqrt{-1})=
  \Q_2[t^2]$. The Galois group $\Gal(\sfL/\Q_2)=\{1,
  \sigma_1,\sigma_2,\sigma_3\}$, where $\sigma_1(t)=t^{-1}$,
  $\sigma_2(t)=t^3$ and $\sigma_3(t)=t^5=-t$. Then $\sfE_i$ is the fixed
  subfield of the element $\sigma_i$ for each $i=1,2,3$. We choose a
  uniformizer $\pi_i$ of $\sfE_i$ as $t-t^3$, $t+t^3$ and
  $t^2-1$ for $i=1,2,3$, respectively. Thus, we have
  $(\sfE,\pi)=(\sfE_i, \pi_i)$ for some $i$.
  Let $x=a + b t +c t^2+ d t^3 \in
  O_\sfL$ with $a,b,c, d\in \Z_2$. It is well-known that every element
  in $1+4 \pi O_{\sfE}$ is a square, and hence
  $N_{\sfL/\sfE}(O_{\sfL}^\times)\supset (O_\sfE^\times)^2 \supset 1+4 \pi
  O_\sfE$. To show $N_{\sfL/\sfE}(O_{\sfL}^\times)\supset
  \Z_2^\times$, it suffices to show that
  the group $N_{\sfL/\sfE}(O_{\sfL}^\times)$ mod $4\pi O_\sfE$ contains
  $1,3,5,7$ mod $8$.  

  For $i=1$, we compute 
\[ N_{\sfL/\sfE_1}(x)=(a^2+b^2+c^2+d^2)+(ab-ad+bc+cd)(t-t^3). \]
Put $(a,b,c,d)=(1,1,0,1)$, one has $N(x) \mod 4\pi_1$ is equal to $3 \mod
8$. Put $(a,b,c,d)=(1,0,2,0)$, one has $N(x) \mod 4\pi_1$ is equal to
$7 \mod 8$. Thus, $N_{\sfL/\sfE_1}(O_{\sfL}^\times)\supset
  \Z_2^\times$,

  For $i=2$, we compute 
\[ N_{\sfL/\sfE_2}(x)=(a^2-b^2+c^2-d^2)+(ab+ad-bc+cd)(t+t^3). \]
Put $(a,b,c,d)=(2,0,1,0)$, one has $N(x) \mod 4\pi_2$ is equal to $5 \mod
8$. Put $(a,b,c,d)=(0,1,1,1)$, one has $N(x) \mod 4\pi_2$ is equal to $7 \mod
8$. Thus, $N_{\sfL/\sfE_2}(O_{\sfL}^\times)\supset
  \Z_2^\times$, 

  For $i=3$, we compute 
\[ N_{\sfL/\sfE_3}(x)=(a^2-b^2-c^2+d^2+2bd+2ac)+(2ac-b^2+d^2)(t^2-1). \]
Put $(a,b,c,d)=(1,1,0,1)$, one has $N(x) \mod 4\pi_3$ is equal to $3 \mod
8$. Put $(a,b,c,d)=(2,0,1,0)$, one has $N(x) \mod 4\pi_3$ is equal to
$7 \mod  8$.Thus, $N_{\sfL/\sfE_3}(O_{\sfL}^\times)\supset
  \Z_2^\times$, \qed
  
\end{proof}

\begin{cor}\label{E.25}
  Let $K=EF$ be a biquadratic CM field. If $p$ is a prime 
  totally ramified in $K$, then $p=2$ and $e_{T,2}=1$.  
\end{cor}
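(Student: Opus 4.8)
The plan is to reduce the corollary to the two parts of Lemma~\ref{totram}, which already contain all the local work. First I would pass to completions at $p$. Since $K/\Q$ is Galois with group $(\Z/2\Z)^2$ and $p$ is totally ramified, there is a unique place $w$ of $K$ above $p$, and the completion $K_w/\Q_p$ is a totally ramified biquadratic extension. Lemma~\ref{totram}~(1) then forces $p=2$, which is the first assertion.

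Next I would pin down $S_p$ and reduce $e_{T,2}$ to a single local norm index. Because $2$ is totally ramified in $K$ and $F$ is an intermediate quadratic field, $2$ is also totally ramified in $F$; hence there is a unique place $v$ of $F$ above $2$, so $S_2=\{v\}$. Here $F_v$ is a ramified quadratic extension of $\Q_2$, and $F_v$ appears as one of the quadratic subextensions of the totally ramified biquadratic field $K_w/\Q_2$. By Proposition~\ref{I.1}~(1), $e_{T,2}=[\Z_2^\times:\cap_{v\in S_2}H_v]=[\Z_2^\times:H_v]$, where $H_v=\Z_2^\times\cap N_{K_w/F_v}(O_{K_w}^\times)$.

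Finally I would invoke Lemma~\ref{totram}~(2) with $\sfE=F_v$ and $\sfL=K_w$, which yields $N_{K_w/F_v}(O_{K_w}^\times)\supset\Z_2^\times$. Hence $H_v=\Z_2^\times\cap N_{K_w/F_v}(O_{K_w}^\times)=\Z_2^\times$, so $e_{T,2}=[\Z_2^\times:\Z_2^\times]=1$, completing the argument. I expect no genuine obstacle: the corollary is a formal consequence of Lemma~\ref{totram}, whose part~(2) already absorbed the only substantive computation (the explicit norm evaluations in the three subfields $\sfE_1,\sfE_2,\sfE_3$). The sole point requiring care is the bookkeeping that total ramification in $K$ produces a single place $v$ of $F$, with $K_w/F_v$ realized as a quadratic subextension to which Lemma~\ref{totram}~(2) applies directly.
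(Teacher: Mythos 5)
Your proof is correct and follows exactly the route the paper intends: Corollary~\ref{E.25} is stated as an immediate consequence of Lemma~\ref{totram}, with part~(1) giving $p=2$ and part~(2), applied to $\sfL=K_w$ and $\sfE=F_v$, giving $H_v=\Z_2^\times$ and hence $e_{T,2}=1$ via Proposition~\ref{I.1}~(1). Your bookkeeping (uniqueness of the places $w$ and $v$, multiplicativity of ramification indices, $S_2=\{v\}$) fills in precisely the routine details the paper leaves implicit.
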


\begin{prop}\label{E.3} 
Let $F$ be a real quadratic field and $E$ an imaginary quadratic
field, and let $K=EF$. Then 
\begin{equation}\label{eq:E.1}
 h(T^{K,\Q})=\frac{h_K}{h_F}\cdot \frac{2^s}{2^{t-1}\cdot Q},   
\end{equation}
where $t$ is the number of places of $F$ ramified in $K$,  $s$ is the number of primes $p$ that are ramified in $E$ and split in $F$, and $Q=Q_{K}$.
\end{prop}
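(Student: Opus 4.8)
The plan is to substitute the biquadratic data directly into the second formula of Theorem~\ref{Main theorem}. Since $K=EF$ is a CM \emph{field} it has a single component, so $r=1$, and its maximal totally real subfield is $K^+=F$; thus $h_{K^+}=h_F$ and $2^{t-r}=2^{t-1}$. Theorem~\ref{Main theorem}(2) then becomes
\[ h(T^{K,\Q})=\frac{\prod_{p\in S_{K/F}} e_{T,p}}{[\A^\times:N(T^{K,\Q}(\A))\cdot\Q^\times]}\cdot\frac{h_K}{h_F}\cdot\frac{1}{2^{t-1}Q}, \]
so everything reduces to showing that the global index in the denominator equals $1$ and that $\prod_{p\in S_{K/F}}e_{T,p}=2^{s}$.

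First I would dispose of the global index. The third quadratic subfield of $K$ is $E'=\Q(\sqrt{-jd})$, which is again imaginary because $j,d>0$, and which is distinct from $E$ since $\sqrt{d}\notin E$. Hence $K$ contains the two distinct imaginary quadratic fields $E$ and $E'$, and Lemma~\ref{I.7} applies verbatim to give $[\A^\times:N(T^{K,\Q}(\A))\cdot\Q^\times]=1$.

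Next I would evaluate the local product. The inclusion $S_{K/F}\subseteq\{p:p\text{ ramified in }E\}$ holds in general: it is exactly the implication proved in Lemma~\ref{E.1}(1) that a prime unramified in $E$ stays unramified in $K/F$, which uses no hypothesis on total ramification. I then split each $p\in S_{K/F}$ by its behaviour in $F$. If $p$ is ramified in $E$ and splits in $F$ then $e_{T,p}=2$ by Lemma~\ref{E.1}(2); there are $s$ such primes. If $p$ is ramified in $E$ and is inert in $F$ then $e_{T,p}=1$ by Lemma~\ref{E.1}(3). (Both statements are purely local computations at $p$ and remain valid for any prime that splits, respectively is inert, in $F$, since such a prime has ramification index $1$ in $F$ and so is automatically not totally ramified in $K$, independently of the global hypothesis of Lemma~\ref{E.1}.) The only remaining primes of $S_{K/F}$ are those ramified in both $E$ and $F$; for such a $p$ either $p$ is totally ramified in $K$, whence $p=2$ and $e_{T,2}=1$ by Corollary~\ref{E.25}, or $p$ is not totally ramified, in which case the unique place $v\mid p$ of $F$ is unramified in $K$ (the inertia computation in the proof of Lemma~\ref{E.1}) and $p\notin S_{K/F}$. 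Assembling the contributions gives $\prod_{p\in S_{K/F}}e_{T,p}=2^{s}$, and substitution into the displayed formula yields \eqref{eq:E.1}.

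The step needing the most care is the prime $p=2$, precisely because Proposition~\ref{E.3} drops the no-total-ramification hypothesis of Lemma~\ref{E.1}. The only genuine subtlety is to rule out the a priori possible value $e_{T,2}=4$ from Proposition~\ref{I.1}(2): when $2$ splits in $F$ the two places $v_1,v_2\mid 2$ of $F$ both have completion $\Q_2$ and induce the \emph{same} completion $E_u=E\otimes_\Q\Q_2$ of $E$, so $H_{v_1}=H_{v_2}$ and Proposition~\ref{I.1}(4) forces $e_{T,2}=2$, not $4$; the totally ramified case is then handled cleanly by Corollary~\ref{E.25}.
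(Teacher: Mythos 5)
Your proof is correct and takes essentially the same route as the paper: substitute into Theorem~\ref{Main theorem}(2) with $r=1$, $K^+=F$, kill the global index via Lemma~\ref{I.7} (using the two distinct imaginary quadratic subfields), and evaluate $\prod_{p\in S_{K/F}}e_{T,p}=2^{s}$ via Lemma~\ref{E.1} and Corollary~\ref{E.25}. The additional care you take --- observing that parts (2)--(3) of Lemma~\ref{E.1} are purely local and hence valid without the no-total-ramification hypothesis, and that a prime ramified in both $E$ and $F$ either drops out of $S_{K/F}$ or is covered by Corollary~\ref{E.25} --- is precisely the detail that the paper's three-line proof leaves implicit.
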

\begin{proof}
By Lemma~\ref{E.1} and Corollary~\ref{E.25}, 
$\prod_{p\in S_{K/F}} e_{T,p}=2^s$. Since $K$ contains two distinct imaginary quadratic fields, by Lemma~\ref{I.7}, we have $[\A^\X: N(T^{K,\Q}(\A))\cdot \Q^\times]=1$. Thus, the formula \eqref{eq:E.1} follows from Theorem~\ref{Main theorem}. \qed
\end{proof}

Note that we may rewrite \eqref{eq:E.1} as 
\begin{equation}\label{eq:E.2}
  h(T^{K,\Q})=\frac{h_K}{h_F}\cdot \frac{1}{2^{|S_{K/F}|-1}\cdot Q_K}.   
\end{equation}
Indeed, suppose we let $m$ be the number of primes $p$ that are
ramified in $E$ and inert in $F$. Then $t=m+2s+\delta$ and
$|S_{K/F}|=m+s+\delta$, where $\delta=1$ if $2$ is totally ramified in
$K$ and $\delta=0$ otherwise. So $t-s=|S_{K/F}|$.

If $K\neq \Q(\sqrt{2},\sqrt{-1})$, then by Herglotz \cite{herglotz}
(cf. \cite[Section 2.10]{xue-yang-yu:num_inv})  
\begin{equation}\label{eq:E.3}
    h_K=Q_K\cdot h_F\cdot h_E\cdot h_{E'}/2,
\end{equation}
where $E'\subset K$ is the other imaginary quadratic field. 
By \eqref{eq:E.2} and \eqref{eq:E.3}, we have
\begin{equation}\label{eq:E.4}
\begin{split}
    h(T^{K,\Q})&=\frac{h_E \cdot h_{E'}\cdot Q_K}{2}\frac{1}{2^{|S_{K/F}|-1} Q_K} \\
        &=\frac{h_E \cdot h_{E'}}{2^{|S_{K/F}|}}, \quad \quad \text{if $K\neq \Q(\sqrt{2},\sqrt{-1})$.}
\end{split}
\end{equation}

For $K=\Q(\sqrt{2},\sqrt{-1})=\Q(\zeta_8)$, it is known that
$h(\Q(\zeta_8))=1$, and $Q_{\Q(\zeta_8)}=1$ as $8$ is a prime power
\cite[Corollary 4.13, p.~39]{Washington-cyclotomic}. Moreover
$S_{K/F}=\{2\}$ and $e_{T,2}=1$ (Corollary~\ref{E.25}). Thus,
\begin{equation}\label{eq:E.5}
    h(T^{K,\Q})=1, \quad \text{for } K=\Q(\sqrt{2},\sqrt{-1}). 
\end{equation}

We specialize to the case where $K=K_j=FE=\Q(\sqrt{p}, \sqrt{-j})$,
where $F=\Q(\sqrt{p})$, $E=\Q(\sqrt{-j})$,  $p$ is a prime and
$j\in\{1,2,3\}$. Note that we have
$\Q(\sqrt{2},\sqrt{-2})=\Q(\sqrt{2},\sqrt{-1})=\Q(\zeta_8)$ and
$\Q(\sqrt{3},\sqrt{-3})=\Q(\sqrt{3},\sqrt{-1})=\Q(\zeta_{12})$. We may
assume that $p\neq 2$ if $j=2$ and $p \neq 3$ if $j=3$. 

The set $S_{K/F}$ is given as follows:
\begin{itemize}
    \item[(i)] $S_{K_1/F}=\{2\}$ if $p \equiv 1 \pmod 4$, and $S_{K_1/F}=\emptyset$ otherwise;
    \item[(ii)] $S_{K_2/F}=\{2\}$ if $p \equiv 1 \pmod 4$, and $S_{K_2/F}=\emptyset$ otherwise;
    \item[(iii)] $S_{K_3/F}=\{3\}$ always (recall $p\neq 3$).
\end{itemize}
Thus, 
\begin{equation}\label{eq:E.6}
   |S_{K/F}|=\begin{cases}
   0 & \text{if $j\in \{1,2\}$ and $p\not\equiv 1\pmod 4$;}\\
   1 & \text{otherwise.}
   \end{cases}
\end{equation}


By \eqref{eq:E.4} and \eqref{eq:E.6}, if $K\neq \Q(\sqrt{2},\sqrt{-1})$, 
we have
\begin{equation}\label{eq:E.7}
    h(T^{K,\Q})=\begin{cases}
   h(\Q(\sqrt{-jp}))   & \text{if $j\in \{1,2\}$ and $p\not\equiv 1\pmod 4$;} \\
   h(\Q(\sqrt{-jp}))/2 & \text{otherwise.}
   \end{cases}
\end{equation}


\begin{remark}\label{E.4}
   Observe from formula \eqref{eq:E.4} that for computing the class
   number $h(T^{K,\Q}) $ or $h(T^{K}_1)$, one  needs not to
   calculate the Hasse unit index $Q_K$. For the case
   where $F=\Q(\sqrt{p})$ with prime $p$, one has 
   $Q_K=2$ if and only if $p\equiv 3\pmod 4$, and either
   $K=\Q(\sqrt{p},\sqrt{-1})$ or $K=\Q(\sqrt{p},\sqrt{-2})$; see
   \cite[Prop.~2.7]{xue-yang-yu:num_inv}. 
\end{remark}

\section{Polarized CM abelian varieties and unitary Shimura varieties}\label{sec:CM}

\subsection{CM points}\label{sec:CM.1}
Let $(K,O_K, V,\psi, \Lambda,h)$ be a PEL-datum, where
\begin{itemize}
    \item $K=\prod_{i=1}^r K_i$ be a product of CM fields $K_i$ with canonical involution $\bar{ }$\ ;
    \item $O_K$ the maximal order of $K$;
    \item $V$ is a free $K$-module of rank one;
    \item $\psi:V\times V\to \Q$ be a non-degenerate alternating
      pairing such that  
\[ \psi(ax,y)=\psi(x,\bar a y), \quad \forall a\in K, \ x,y\in V; \]
    \item $\Lambda$ be an $O_K$-lattice with $\psi(\Lambda,\Lambda)\subset \Z$;
    \item $h:\C\to \End_{K_\R}(V_\R)$ be an $\R$-algebra homomorphism such that
\[ \psi(h(z)x,y)=\psi(x, h(\bar z)y),\quad \text{for}\ z\in \C, \ x,y\in V_\R:=V\otimes_\Q \R, \] and that the pairing $(x,y):=\psi(h(i)x,y)$ is symmetric and positive definite.
\end{itemize}

Let $V_\C=V^{-1,0}\oplus V^{0,-1}$ be the decomposition into
$\C$-subspaces such that $h(z)$ acts by $z$ (resp.~$\bar z$) on
$V^{-1,0}$ (resp. $V^{0,-1}$).  
Let $T=T^{K,\Q}$ and $U\subset T(\A_f)$ be an open compact subgroup. Put $g=\frac{1}{2} \dim_\Q V$. Let $M_{(\Lambda,\psi),U}$ be the set of isomorphism classes of tuples $(A,\lambda,\iota,\bar \eta)_\C$, where
\begin{itemize}
    \item $A$ is a complex abelian variety of dimension $g$;
    \item $\iota:O_K\to \End(A)$ is a ring monomorphism;
    \item $\lambda:A\to A^t$ is an $O_K$-linear polarization, i.e., it satisfies $\lambda \iota(\bar b)=\iota(b)\lambda$ for all $b\in O_K$;
    \item $\bar \eta$ is an $U$-orbit of $O_K\otimes \wh \Z$-linear isomorphisms
    \[ \eta: V\otimes \wh \Z \to T(A):=\prod_\ell T_\ell(A) \]
    preserving the pairing up to a scalar in $\wh \Z^\times$, where $T_\ell(A)$ is the $\ell$-adic Tate module of $A$
\end{itemize}
such that 
\begin{itemize}
    \item [(a)] $\det(b; V^{-1,0})=\det(b; \Lie(A))$ for all $b\in O_K$;
    \item [(b)] there exists a $K$-linear isomorphism 
\begin{equation}\label{eq:CM.1}
    (V,\psi)\simeq (H_1(A,\Q), \<\, ,\>_\lambda)
\end{equation}
    that preserves the pairings up to a scalar in $\Q^\times$, where $\<\, ,\>_\lambda$ is the pairing induced by the polarization $\lambda$.
\end{itemize}
Two members $(A_1,\lambda_1,\iota_1,\bar \eta_1)$ and $(A_2,\lambda_2,\iota_2,\bar \eta_2)$ are said to be isomorphic if there exists an $O_K$-linear isomorphism $\varphi:A_1\to A_2$ such that $\varphi^* \lambda_2=\lambda_1$ and $\varphi_* \bar \eta_1=\bar \eta_2$. 

\begin{lemma}\label{CM.1}
Let $T$ be an algebraic torus over $\Q$, $U\subset T(\A_f)$ an open compact subgroup, and $U_\infty\subset T(\R)$ an open subgroup. Then 
\begin{equation}\label{eq:CM.2}
\begin{split}
[T(\A): T(\Q)U_\infty U]& =\frac{[U_T:U]}{[T(\Z)_\infty: T(\Z)_\infty \cap U]} \cdot [T(\A): T(\Q)U_\infty U_T] \\    
&=\frac{[U_T:U]}{[T(\Z)_\infty: T(\Z)_\infty \cap U]} \cdot [T(\R):T(\Z) U_\infty]\cdot h(T), 
\end{split}
\end{equation}
where $U_T:=T(\wh \Z)$ is the maximal open compact subgroup of $T(\A_f)$ and $T(\Z)_\infty=T(\Z)\cap U_\infty$.
\end{lemma}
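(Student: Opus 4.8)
The plan is to exploit that $T$ is commutative, so that $T(\A)=T(\R)\times T(\A_f)$ is an abelian group in which every product of subgroups is again a subgroup; I may then freely use the isomorphism theorems and need not worry about one-sided cosets. Throughout I write $H:=T(\Q)U_\infty$, viewed inside $T(\A)$ via the diagonal embedding of $T(\Q)$ and the inclusion $U_\infty\subset T(\R)$, and I treat $U\subset U_T=T(\wh\Z)$ as subgroups of the finite part. I will prove the two displayed equalities in \eqref{eq:CM.2} separately.

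For the first equality, since $U\subset U_T$ I factor
\[ [T(\A):T(\Q)U_\infty U]=[T(\A):T(\Q)U_\infty U_T]\cdot[HU_T:HU], \]
and compute the second factor. As $T(\A)$ is abelian, the map $u\mapsto u\,HU$ identifies $HU_T/HU\cong U_T/(U_T\cap HU)$, whence $[HU_T:HU]=[U_T:U]/[U_T\cap HU:U]$. The crux is the identification $U_T\cap HU=T(\Z)_\infty\cdot U$ inside $T(\wh\Z)$: writing an element of $U_T$ as $\gamma\,a_\infty\,u$ with $\gamma\in T(\Q)$, $a_\infty\in U_\infty$, $u\in U$ and comparing archimedean and finite components forces $\gamma\in U_\infty$ together with $\gamma\in T(\Q)\cap T(\wh\Z)=T(\Z)$, hence $\gamma\in T(\Z)\cap U_\infty=T(\Z)_\infty$; conversely every such $\gamma u$ lies in $U_T\cap HU$. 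The second isomorphism theorem then gives $[U_T\cap HU:U]=[T(\Z)_\infty U:U]=[T(\Z)_\infty:T(\Z)_\infty\cap U]$, which yields the first equality.

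For the second equality I pass to double cosets and use the projection $T(\A)\to T(\A_f)$ forgetting the archimedean component. It carries $T(\Q)U_\infty U_T$ into $T(\Q)U_T$ and hence induces a surjective homomorphism of abelian groups
\[ \pi\colon T(\Q)\backslash T(\A)/U_\infty U_T\longrightarrow T(\Q)\backslash T(\A_f)/U_T, \]
whose target has cardinality $h(T)$. A direct computation of the fibre over the identity class — reducing a class $[(x_\infty,x_f)]$ whose image in the target is trivial to one with $x_f=1$, and then determining when two classes $[(y_\infty,1)]$ and $[(y_\infty',1)]$ agree, where again the relevant rational elements are exactly $T(\Q)\cap T(\wh\Z)=T(\Z)$ — shows $\ker\pi\cong T(\R)/T(\Z)U_\infty$, of cardinality $[T(\R):T(\Z)U_\infty]$. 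Since $\pi$ is a group homomorphism, all fibres are cosets of the kernel and thus equinumerous, so the source has cardinality $[T(\R):T(\Z)U_\infty]\cdot h(T)=[T(\A):T(\Q)U_\infty U_T]$, as required.

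The two applications of the isomorphism theorems and the fibration argument are routine once commutativity is invoked; the one place that genuinely needs care — and which I regard as the main (if modest) obstacle — is the bookkeeping of the intersections. Concretely, one must verify $T(\Q)\cap T(\wh\Z)=T(\Z)$, which is immediate from the description $T(\Z)=T(\Q)\cap[T(\R)\times\prod_{p<\infty}T(\Z_p)]$ recorded earlier, and then check that the archimedean constraint imposed by $U_\infty$ cuts this group down precisely to $T(\Z)_\infty=T(\Z)\cap U_\infty$, so that the very same group of integral points governs both the correction factor $[U_T:U]/[T(\Z)_\infty:T(\Z)_\infty\cap U]$ and the kernel of $\pi$.
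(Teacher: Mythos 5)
Your proof is correct and follows essentially the same route as the paper's: both equalities come down to the identities $U_T\cap T(\Q)U_\infty U=T(\Z)_\infty U$ and $T(\R)\cap T(\Q)U_\infty U_T=T(\Z)U_\infty$, which the paper establishes via the same kernel--cokernel exact sequences that your isomorphism-theorem and fibration arguments encode (your key step $\gamma=u^{-1}\in T(\Q)\cap U_T=T(\Z)$ is verbatim the paper's). The only cosmetic difference is that the paper first reduces the first index to $[T(\A_f):T(\Q)_\infty U]$ before intersecting with $U_T$, whereas you work directly in $T(\A)$; the substance is identical.
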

\begin{proof}
Let $T(\Q)_\infty=T(\Q)\cap U_\infty$. One has $[T(\A):T(\Q)U_\infty U]=[T(\A_f):T(\Q)_\infty U]$.
Now consider the exact sequence 
\begin{equation}\label{eq:CM.3}
         0  \longrightarrow \frac{U_T}{U\cdot T(\Q)_\infty \cap
           U_T}\simeq \frac{U_T\cdot G(\Q)_\infty}{U\cdot
           T(\Q)_\infty} \longrightarrow  \frac{T(\A_f)}{U\cdot
           T(\Q)_\infty} \longrightarrow   \frac{T(\A_f)}{U_T \cdot
           T(\Q)_\infty} \longrightarrow  1. 
\end{equation}

It is easy to verify $U\cdot T(\Q)_\infty \cap U_T=U \cdot T(\Z)_\infty$. Using the exact sequence \eqref{eq:CM.3} and the following one

\begin{equation}\label{eq:CM.4}
        0  \longrightarrow  \frac{T(\Z)_\infty}{T(\Z)_\infty \cap U} \longrightarrow  \frac{U_T}{U} \longrightarrow   \frac{U_T}{U \cdot T(\Z)_\infty} \longrightarrow  1, 
\end{equation}

we obtain the first equation of \eqref{eq:CM.2}. 

Now we prove the second equality. Consider the exact sequence 

\begin{equation}\label{eq:CM.5}
       0  \longrightarrow  \frac{T(\R)}{U_T \cdot T(\Q) \cdot U_\infty \cap T(\R)} \longrightarrow  \frac{T(\A)}{U_T \cdot T(\Q) \cdot U_\infty} \longrightarrow   \frac{T(\A)}{U_T \cdot T(\Q) \cdot T(\R)} \longrightarrow  1, 
\end{equation}
where the inclusion $T(\Q)\embed T(\A)$ is given by the diagonal map,
the map $T(\R)\embed T(\A)=T(\R)\times T(\A_f)$ sends $t_\infty\mapsto
(t_\infty,1)$, and the intersection $U_T \cdot T(\Q) \cdot U_\infty
\cap T(\R)$ is taken in $T(\A)$. Suppose $utu_\infty=(tu_\infty, tu)$
is an element in $U_T \cdot T(\Q) \cdot U_\infty \cap T(\R)$. Then
$tu=1$ and $t=u^{-1}\in T(\Q)\cap U_T=T(\Z)$. Thus, $U_T \cdot T(\Q)
\cdot U_\infty \cap T(\R)=T(\Z) \cdot U_\infty \subset T(\R)$ and we
have $[T(\A): T(\Q)U_\infty U_T]=[T(\R):T(\Z) U_\infty]\cdot
h(T)$. \qed 
\end{proof}

By \cite[4.11]{deligne:shimura}, the set $M_{(\Lambda,\psi),U}$ is isomorphic to the Shimura set $\Sh_U(T,h)\simeq T(\Q)\backslash T(\A_f)/U$. By Lemma~\ref{CM.1}, we have
\begin{equation}
    |M_{(\Lambda,\psi),U}|=\frac{[T(\wh \Z):U]}{[T(\Z): T(\Z)\cap U]} \cdot h(T)=\frac{[T(\wh \Z):U]}{[\mu_K: \mu_K\cap U]} \cdot h(T),
\end{equation}
where $\mu_K=\prod_{i=1}^r \mu_{K_i}$ and $T=T^{K,\Q}$. Using Theorem~\ref{Main theorem}, we obtain the following result.

\begin{prop}\label{CM.2}
We have
\begin{equation}
   |M_{(\Lambda,\psi),U}|=\frac{[T(\wh \Z):U]}{[\mu_K: \mu_K\cap U]}\cdot \frac{h_{K}}{h_{K^+}} \cdot \frac{1}{Q\cdot 2^{t-r}} \cdot \frac{\prod_{p\in S_{K/K^+}}e_{T,p}}{[\A^\times: N(T(\A))\cdot \Q^\times]},
\end{equation}
where $r,t,Q,S_{K/K^+}$ and $e_{T,p}$ are as in Theorem~\ref{Main theorem}.
\end{prop}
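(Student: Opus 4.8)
The plan is to combine the two ingredients that have just been assembled above the statement: the realization of $M_{(\Lambda,\psi),U}$ as a Shimura set, and the explicit class-number formula of Theorem~\ref{Main theorem}. First I would invoke \cite[4.11]{deligne:shimura} to identify $M_{(\Lambda,\psi),U}$ with $T(\Q)\backslash T(\A_f)/U$ for $T=T^{K,\Q}$, and then apply Lemma~\ref{CM.1} with the archimedean datum $U_\infty=T(\R)$. With this choice one has $T(\Z)_\infty=T(\Z)$ and the archimedean factor $[T(\R):T(\Z)U_\infty]$ collapses to $1$, while $[T(\A):T(\Q)U_\infty U]$ reduces to the finite double-coset count $[T(\A_f):T(\Q)U]=|M_{(\Lambda,\psi),U}|$. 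Lemma~\ref{CM.1} then delivers the mass identity
\[
|M_{(\Lambda,\psi),U}|=\frac{[T(\wh\Z):U]}{[T(\Z):T(\Z)\cap U]}\cdot h(T^{K,\Q}).
\]

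The one genuinely arithmetic input at this stage is the identification $T(\Z)=T^{K,\Q}(\Z)=\mu_K$ established in the proof of Lemma~\ref{4.7}, where a unit of $O_K$ with trivial norm to $K^+$ is forced to be a root of unity. Granting this, the denominator $[T(\Z):T(\Z)\cap U]$ becomes $[\mu_K:\mu_K\cap U]$, which is precisely the shape occurring in the target formula. The remaining step is a direct substitution: I would insert
\[
h(T^{K,\Q})=\frac{\prod_{p\in S_{K/K^+}}e_{T,p}}{[\A^\times:N(T^{K,\Q}(\A))\cdot\Q^\times]}\cdot\frac{h_K}{h_{K^+}}\cdot\frac{1}{2^{t-r}Q}
\]
from Theorem~\ref{Main theorem}(2) into the mass identity and regroup the factors, matching the ratio $h_K/h_{K^+}$, the factor $1/(2^{t-r}Q)$, the numerator $\prod_p e_{T,p}$ and the global index against the claimed expression.

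There is no substantive obstacle here: once the Shimura-set description and Lemma~\ref{CM.1} are in place, the proposition is pure bookkeeping. The only points demanding care are confirming that the archimedean index in Lemma~\ref{CM.1} really trivializes in this CM setting, so that the finite count $h(T^{K,\Q})$ is exactly what feeds into the formula, and keeping the convention $T(\Z)=\mu_K$ consistent throughout; both are settled by the earlier lemmas, and the identity then holds verbatim.
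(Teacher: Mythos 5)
Your proposal is correct and follows essentially the same route as the paper: identify $M_{(\Lambda,\psi),U}$ with the Shimura set $T(\Q)\backslash T(\A_f)/U$ via \cite[4.11]{deligne:shimura}, apply Lemma~\ref{CM.1} (with $U_\infty=T(\R)$, which the paper uses implicitly) to get $|M_{(\Lambda,\psi),U}|=\frac{[T(\wh\Z):U]}{[\mu_K:\mu_K\cap U]}\cdot h(T^{K,\Q})$ using $T(\Z)=\mu_K$ from Lemma~\ref{4.7}, and then substitute the class number formula of Theorem~\ref{Main theorem}(2). Your explicit verification that the archimedean index trivializes is exactly the bookkeeping the paper suppresses.
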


\subsection{Connected components of unitary Shimura varieties}\label{sec:CM.2}
In this subsection we consider a PEL-datum $(K,O_K, V,\psi, \Lambda,h)$, where
\begin{itemize}
    \item $K$ is a CM field with canonical involution $\bar{ }$\ ;
    \item $V$ is a free $K$-module of rank $n>1$;
    \item $O_K,\psi,h$ are as in Section~\ref{sec:CM.1}.
\end{itemize}

Let $G=GU_K(V,\psi)$ be the group of unitary similitudes of $(V,\psi)$. 
The kernel of the multiplier homomorphism $c:G\to \Gm$ 
is the unitary group $U_K(V,\psi)$ associated to $(V,\psi)$. 
Let $X$ be the $G(\R)$-conjugacy class of $h$, and
$U\subset G(\A_f)$ an open compact subgroup. The complex Shimura variety 
associated to the PEL datum is defined by
\begin{equation}
    Sh_U(G,X)_\C:=G(\Q)\backslash X\times G(\A_f)/U. 
\end{equation}
As in Section~\ref{sec:CM.1}, we define $M_{(\Lambda,\psi),U}$ as the moduli space of complex abelian varieties $(A,\lambda,\iota,\bar \eta)$ with additional structures satisfying the conditions (a) and (b). By \cite[4.11]{deligne:shimura}, one has $Sh_U(G,X)_\C\simeq M_{(\Lambda,\psi),U}$; this provides the modular interpretation of the Shimura variety $Sh_U(G,X)_\C$. We are interested in the number of the connected components of the moduli space $M_{(\Lambda,\psi),U}$, or equivalently, those of the Shimura variety $\Sh_U(G,X)_\C$.
 
Let $X^+$ be the connected component of $X$ that contains the base point $h$, and let $G(\R)_+:=\Stab_{G(\R)} X^+$ be the stabilizer of $X^+$ in $G(\R)$. We have 
\begin{equation}\label{eq:CM.9}
    \pi_0(\Sh_U(G,X)_\C) \simeq G(\Q)_+\backslash G(\A_f)/U\simeq G(\Q)\backslash G(\A)/G(\R)_{+} U, 
\end{equation}
where $G(\Q)_+:=G(\Q)\cap G(\R)_+$. Let $G^{\rm der}$ be the derived group of $G$, and let $D:=G/G^{\rm der}$ be the quotient torus. Denote by $\nu:G\to D$ the natural homomorphism. 
Note that the derived group $G^{\rm der}=SU_K(V,\psi)$ is semi-simple and simply connected.

\begin{thm}\label{CM.3}
Assume that $G^{\rm der}(\R)$ is not compact. Then the complex Shimura variety $\Sh_U(G,X)_\C$ has
\begin{equation}
   \frac{[D(\wh \Z):\nu(U)]}{[\mu_K:\mu_K\cap \nu(U)]}
\cdot \frac{h_K}{h_{K^+}}\cdot \frac{1}{2^{t-1} Q_K}\cdot
\begin{cases}
1 & \text{if $n$ is even;} \\
\frac{\prod_{p\in S_{K/K^+}} e_{T,p}}{[\A^\times:N(T^{K,\Q}(\A)\cdot \Q^\times]} & \text{if $n$ is odd,}
\end{cases} 
\end{equation}
connected components, where $t, Q_K$ and $e_{T,p}$ are as in Theorem~\ref{Main theorem}.
\end{thm}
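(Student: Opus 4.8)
The plan is to reduce the computation of $\pi_0$ to a class number of the quotient torus $D=G/G^{\mathrm{der}}$ and then to evaluate that class number by the Ono--Shyr machinery already used for Theorem~\ref{Main theorem}. Starting from the identification $\pi_0(\Sh_U(G,X)_\C)\simeq G(\Q)_+\backslash G(\A_f)/U$ of \eqref{eq:CM.9}, I would invoke strong approximation for $G^{\mathrm{der}}=SU_K(V,\psi)$, which is legitimate because $G^{\mathrm{der}}$ is semisimple and simply connected and $G^{\mathrm{der}}(\R)$ is noncompact by hypothesis. Combined with the vanishing of $H^1$ of the simply connected kernel at the finite places, the map $\nu\colon G\to D$ then yields a bijection $\pi_0(\Sh_U(G,X)_\C)\simeq D(\Q)_+\backslash D(\A_f)/\nu(U)$ with $D(\Q)_+:=\nu(G(\Q)_+)$. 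Since the real points of $SU(p,q)$ are connected, one has $G^{\mathrm{der}}(\R)\subset G(\R)_+$, and a short diagram chase gives $\nu(G(\Q)\cap G(\R)_+)=\nu(G(\Q))\cap\nu(G(\R)_+)$; writing $D(\R)_+:=\nu(G(\R)_+)$ this identifies $|\pi_0|$ with the finite index $[D(\A):D(\Q)\,D(\R)_+\,\nu(U)]$.

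Next I would apply Lemma~\ref{CM.1} with $T=D$, $U=\nu(U)$ and $U_\infty=D(\R)_+$, giving
\[
|\pi_0(\Sh_U(G,X)_\C)|=\frac{[D(\wh\Z):\nu(U)]}{[D(\Z)_\infty:D(\Z)_\infty\cap\nu(U)]}\cdot[D(\R):D(\Z)\,D(\R)_+]\cdot h(D),
\]
where $D(\Z)_\infty=D(\Z)\cap D(\R)_+$. To make this explicit I would identify $D$ via $(\det,c)$ with the subtorus $\{(a,\mu)\in R_{K/\Q}\Gm\times\Gm : N_{K/K^+}(a)=\mu^{n}\}$, the relation being the defining identity $N_{K/K^+}(\det g)=c(g)^{n}$ for unitary similitudes. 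A direct computation then shows $D(\Z)=\mu_K$ when $n$ is odd and $D(\Z)=\mu_K\times\{\pm1\}$ when $n$ is even, while $D(\R)\cong(S^1)^{d}\times\R_+$ (hence connected) when $n$ is odd and $D(\R)\cong(S^1)^{d}\times\R^\times$ when $n$ is even, with $d=[K^+:\Q]$. In both parities I expect $D(\Z)_\infty=\mu_K$ and the archimedean factor $[D(\R):D(\Z)\,D(\R)_+]=1$: for $n$ odd because an open subgroup of the connected group $D(\R)$ must be all of it, so $D(\R)_+=D(\R)$; for $n$ even because the element $(1,-1)\in D(\Z)$ together with $D(\R)_+$ already fills out the two components of $D(\R)$. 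This reduces the formula to $|\pi_0|=\frac{[D(\wh\Z):\nu(U)]}{[\mu_K:\mu_K\cap\nu(U)]}\cdot h(D)$, which accounts for the prefactor in the statement.

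It remains to compute $h(D)$, and this is where the parity of $n$ enters. I would run the very same argument as in the proof of Theorem~\ref{Main theorem}, now applied to the exact sequence $0\to T^K_1\to D\xrightarrow{\,c\,}\Gm\to 0$: the isogeny $x\mapsto(x^2N(x)^{-1},c(x))$ into $T^K_1\times\Gm$, together with Shyr's Theorem~\ref{thm1.2} and Theorem~\ref{theorem tau (E)}, give $h(D)=h(T^K_1)\cdot\big(\prod_{p}[\Z_p^\times:c(D(\Z_p))]\big)\big/[\A^\times:c(D(\A))\cdot\Q^\times]$. The local and global indices are then read off from $N_{K/K^+}(a)=\mu^{n}$: since $(O_{K^+_v}^\times)^2\subset N(O_{K_v}^\times)$, for $n$ even every $\mu\in\Z_p^\times$ satisfies $\mu^{n}=(\mu^{n/2})^2\in N(O_{K_v}^\times)$, so $c(D(\Z_p))=\Z_p^\times$ and likewise the global index is $1$, whence $h(D)=h(T^K_1)$; for $n$ odd the map $\mu\mapsto\mu^{n}$ is the identity on $\Z_p^\times/(\Z_p^\times)^2$, so $c(D(\Z_p))=\cap_{v\in S_p}H_v=N(T^{K,\Q}(\Z_p))$ and the global index equals $[\A^\times:N(T^{K,\Q}(\A))\cdot\Q^\times]$, whence $h(D)=h(T^{K,\Q})$. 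Substituting the explicit values $h(T^K_1)=\frac{h_K}{h_{K^+}}\frac1{2^{t-1}Q_K}$ and, for $n$ odd, $h(T^{K,\Q})=h(T^K_1)\cdot\frac{\prod_{p\in S_{K/K^+}}e_{T,p}}{[\A^\times:N(T^{K,\Q}(\A))\cdot\Q^\times]}$ from Theorem~\ref{Main theorem} (with $r=1$) yields the asserted formula.

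The main obstacle is not the torus class-number bookkeeping, which is parallel to Theorem~\ref{Main theorem}, but the two archimedean and approximation points: justifying the reduction to $D$ (strong approximation together with the equality $D(\Q)_+=D(\Q)\cap D(\R)_+$, which rests on connectedness of $G^{\mathrm{der}}(\R)$), and proving that the archimedean factor is trivial in both parities while $D(\Z)_\infty=\mu_K$. It is precisely the different shapes of $D(\R)$ and $D(\Z)$ for $n$ even versus $n$ odd --- equivalently, the presence or absence of the norm-index correction $\prod e_{T,p}/[\A^\times:N(T^{K,\Q}(\A))\Q^\times]$ coming from $c(D(\Z_p))$ and $c(D(\A))$ --- that produces the case distinction in the statement.
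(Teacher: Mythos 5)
Your overall strategy coincides with the paper's: reduce $\pi_0(\Sh_U(G,X)_\C)$ to a double coset space for the quotient torus $D=G/G^{\rm der}$ via strong approximation and Kneser's vanishing of $H^1(\Qp,G^{\rm der})$, then evaluate that space with Lemma~\ref{CM.1} and Theorem~\ref{Main theorem}. Your hands-on description of $D$ as $\{(a,\mu): N_{K/K^+}(a)=\mu^n\}$ via $(\det,c)$, your computations of $D(\Z)$, $D(\R)$, of the archimedean index, and the parity-dependent identification of $c(D(\Z_p))$ with $\Z_p^\times$ (for $n$ even) or $N(T^{K,\Q}(\Z_p))$ (for $n$ odd) are correct; they amount to the paper's citation of Kottwitz's identification $D\simeq T^{K}_1\times\Gm$ ($n$ even) or $D\simeq T^{K,\Q}$ ($n$ odd), and then $h(D)$ can simply be read off from Theorem~\ref{Main theorem} rather than re-running Shyr's machinery.

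There is, however, a genuine gap at the step where you pass from $|\pi_0|=|\nu(G(\Q)_+)\backslash D(\A_f)/\nu(U)|$ to the index $[D(\A):D(\Q)\,D(\R)_+\,\nu(U)]$. Unwinding that index gives $[D(\A_f):(D(\Q)\cap D(\R)_+)\cdot\nu(U)]$, so you need the equality $\nu(G(\Q)_+)=D(\Q)\cap D(\R)_+$; this is also what the application of Lemma~\ref{CM.1} demands, since its right-hand side is expressed through $D(\Q)$, $D(\Z)$ and $h(D)$, not through the subgroup $\nu(G(\Q))$. Your diagram chase together with connectedness of $G^{\rm der}(\R)$ only yields $\nu(G(\Q)_+)=\nu(G(\Q))\cap D(\R)_+$; it gives nothing toward the reverse inclusion $D(\Q)\cap D(\R)_+\subset \nu(G(\Q))$. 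That inclusion asserts: a rational point $x\in D(\Q)$ which lifts to $G(\R)_+$ (so the class of the fiber $\nu^{-1}(x)$ dies in $H^1(\R,G^{\rm der})$, and it dies in every $H^1(\Qp,G^{\rm der})$ by Kneser) must already lift to $G(\Q)$, i.e.\ its class in $H^1(\Q,G^{\rm der})$ is trivial. This is precisely the Hasse principle for the simply connected group $G^{\rm der}=SU_K(V,\psi)$, namely the injectivity of $H^1(\Q,G^{\rm der})\to \prod_v H^1(\Q_v,G^{\rm der})$ (Kneser--Harder--Chernousov); it is a deep global theorem and not a consequence of connectedness of $G^{\rm der}(\R)$, contrary to what your closing paragraph suggests. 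Without it one only obtains an inequality $|\pi_0|\geq$ the stated formula. This is exactly the ingredient the paper flags ("Using the Hasse principle, one shows that $\nu(G(\Q)_+)=D(\Q)\cap\nu(G(\R)_+)$"); once it is inserted, your argument goes through.
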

\begin{proof}
Using the strong approximation argument and Kneser's theorem (namely, $H^1(\Qp, G^{\rm der})=1$ for all primes $p$), the morphism $\nu$ induces a bijection  
\begin{equation}\label{eq:CM.10}
    \nu: G(\Q)_+\backslash G(\A_f)/U \isoto \nu(G(\Q)_+)\backslash D(\A_f)/\nu(U)
\end{equation}
(see \cite[Lemma 2.2]{yu:components}). 
By \cite[Section 7, p.~393--394]{kottwitz:jams1992}, one has 
\begin{equation}
    D\simeq \begin{cases}
    T^{K,1}\times \Gm & \text{if $n$ is even;} \\
    T^{K,\Q}  & \text{if $n$ is odd.} \\
    \end{cases}
\end{equation}
Using the Hasse principle, one shows that $\nu(G(\Q)_{+})=D(\Q)\cap \nu(G(\R)_+)$. One directly checks 
\begin{equation}
     \nu(G(\R)_+) \simeq \begin{cases}
    T^{K,1}(\R)\times \R_{+} & \text{if $n$ is even;} \\
    T^{K,\Q}(\R)  & \text{if $n$ is odd.} \\
    \end{cases}
\end{equation}
As a result, the intersection $D(\Z)_\infty:=D(\Z)\cap \nu(G(\R)_+)$ is equal to $\mu_K$ for all $n$. 
Applying Lemma~\ref{CM.1}, \eqref{eq:CM.9}, \eqref{eq:CM.10} and the formula for $h(D)$ using Theorem~\ref{Main theorem} we obtain the result. \qed
\end{proof}


\subsection{Polarized abelian varieties over finite fields}

In this subsection we formulate two counting problems for polarized abelian 
varieties over finite fields in an isogeny class and compute their cardinality using 
the class number formula of CM tori. Let $k$ be a finite field.

\def\Isog{{\rm Isog}}

\begin{defn}
Let $\ul A_1=(A_1,\lambda_1)$ and $\ul A_2=(A_2,\lambda_2)$ be two polarized abelian varieties over $k$. 

(1) They ($\ul A_1$ and $\ul A_2$) are \emph{isomorphic}, denoted $\ul A_1\simeq \ul A_2$, 
if there exists an isomorphism $\alpha:A_1\isoto A_2$ such that
$\alpha^* \lambda_2=\lambda_1$. Similarly, their polarized
$\ell$-divisible groups $\ul A_1[\ell^\infty]$ and $\ul
A_2[\ell^\infty]$ are said to be \emph{isomorphic}, denoted  $\ul
A_1[\ell^\infty]\simeq \ul A_2[\ell^\infty]$, if there exists an isomorphism $\alpha_\ell:A_1[\ell^\infty]\isoto A_2[\ell^\infty]$ such that
$\alpha_\ell^* \lambda_2=\lambda_1$.    

(2) They are said to be \emph{in the same isogeny class} if there exists a quasi-isogeny 
$\alpha: A_1 \to A_2$ (i.e., a multiple of $\alpha$ by an integer is an isogeny) such that $\alpha^* \lambda_2=\lambda_1$. Denote by 
$\Isog(A_1,\lambda_1)$ the set of isomorphism classes of
$(A_2,\lambda_2)$ lying in the same isogeny class of
$(A_1,\lambda_1)$.    

(3) They are said to be \emph{similar}, denoted $\ul A_1\sim \ul A_2$,
if there exists an isomorphism $\alpha: A_1 \to A_2$ such that
$\alpha^* \lambda_2=q \lambda_1$ for some $q\in \Q_{>0}$. Similarly, their polarized
$\ell$-divisible groups $\ul A_1[\ell^\infty]$ and $\ul
A_2[\ell^\infty]$ are said to be \emph{similar}, denoted  $\ul
A_1[\ell^\infty]\sim \ul A_2[\ell^\infty]$, if there exists an
isomorphism $\alpha_\ell:A_1[\ell^\infty]\isoto A_2[\ell^\infty]$ such
that 
$\alpha_\ell^* \lambda_2=q \lambda_1$, for some $q\in \Q_\ell^\times$.

(4) They are said to be \emph{isomorphic locally everywhere},
if $$\ul A_1[\ell^\infty]\simeq \ul A_2[\ell^\infty]\quad \text{over
  $k$}$$  
for all primes $\ell$ including the prime $\char  k$.

(5) They are said to be \emph{similar locally everywhere},
if $$\ul A_1[\ell^\infty]\sim \ul A_2[\ell^\infty]\quad \text{over $k$}$$ 
for all primes $\ell$ (also including the prime $\char  k$).

\end{defn} 

Now we start with a polarized abelian variety $(A_0,\lambda_0)$ over $k$. Assume that the endomorphism algebra $\End^0(A_0)$ is commutative. So $\End^0(A_0)=K$ for some CM algebra $K$, and $R:=\End(A_0)\subset K$ is a CM order. Let $T:=T^{K,\Q}$ and $T^1:=T^{K}_1$. On the other hand, consider 
\begin{equation}
\Lambda(A_0,\lambda_0):=\left \{(A,\lambda)\in \Isog(A_0,\lambda_0)\,
  {\Bigg |}\, \text{\parbox{1.8in}{$(A,\lambda)$ is locally isomorphic
      to $(A_0,\lambda_0)$ everywhere}} \right \} 
\end{equation}
and 
\begin{equation}
I(A_0,\lambda_0):=\left \{ \text{\parbox{1.4in}{similitude classes of
      $(A,\lambda)\in \Isog(A_0,\lambda_0)$}}\, {\Bigg |}\,
  \text{\parbox{1.5in}{$(A,\lambda)$ is locally similar to
      $(A_0,\lambda_0)$ everywhere}} \right \}.     
\end{equation}

\begin{prop}\label{CM.5} Let $U:=T(\A_f)\cap \wh R^\times$ and $U^1:=T^1(\A_f)\cap \wh R^\times$. We have 
\begin{equation}\label{eq:CM.13}
    |\Lambda(A_0,\lambda_0)|=\frac{[T^1(\wh \Z):U^1]}{[\mu_K:\mu_K\cap U^1]} \cdot \frac{h_K}{h_{K^+}} \cdot \frac{1}{2^{t-r} Q_K},
\end{equation}
and 
\begin{equation}\label{eq:CM.14}
    |I(A_0,\lambda_0)|=\frac{[T(\wh \Z):U]}{[\mu_K:\mu_K\cap U]}\cdot \frac{h_K}{h_{K^+}} \cdot \frac{1}{2^{t-r} Q_K} \cdot \frac{\prod_{p\in S_{K/K^+}} e_{T,p}}{[\A^\times: N(T(\A))\cdot \Q^\times]}.
\end{equation}
\end{prop}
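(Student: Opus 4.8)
The plan is to reduce both formulas to the class-number computation already performed for Theorem~\ref{Main theorem}, by first identifying each of the sets $\Lambda(A_0,\lambda_0)$ and $I(A_0,\lambda_0)$ with an explicit double coset space of the tori $T^1=T^K_1$ and $T=T^{K,\Q}$, and then converting that double coset count into a class number via the index juggling of Lemma~\ref{CM.1}. Concretely, I would first rewrite the two target formulas in the equivalent forms
\begin{equation*}
|\Lambda(A_0,\lambda_0)|=\frac{[T^1(\wh\Z):U^1]}{[\mu_K:\mu_K\cap U^1]}\cdot h(T^1),\qquad
|I(A_0,\lambda_0)|=\frac{[T(\wh\Z):U]}{[\mu_K:\mu_K\cap U]}\cdot h(T),
\end{equation*}
using the explicit values $h(T^1)=\frac{h_K}{h_{K^+}}\frac{1}{2^{t-r}Q_K}$ and the formula for $h(T)=h(T^{K,\Q})$ established in Theorem~\ref{Main theorem}. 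This step is pure bookkeeping, so the genuine content lies in the two bijections and in the index computation.

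Second, I would set up the lattice description of the isogeny class. Fixing $(A_0,\lambda_0)$ as a base point, put $V=H_1(A_0,\Q)$, a free $K$-module of rank one carrying the Riemann form $\psi_0$ attached to $\lambda_0$. A member $(A,\lambda)\in\Isog(A_0,\lambda_0)$ is recorded prime by prime: by its $\ell$-adic Tate module for $\ell\neq p:=\char k$, and by the covariant Dieudonn\'e module of its $p$-divisible group at $p$. By Tate's isogeny theorem together with Serre--Tate and Dieudonn\'e theory these local data assemble to a $\wh R$-lattice in $V\otimes\A_f$, well defined up to the automorphisms of the base point. The hypothesis that $(A,\lambda)$ is locally isomorphic (resp.\ locally similar) to $(A_0,\lambda_0)$ at every prime forces the local polarized type to agree with that of $(A_0,\lambda_0)$ everywhere, so the quasi-isogeny relating the two is a local isometry (resp.\ similitude) of $(V,\psi_0)$. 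Now a $K$-linear $g\in K^\times$ transforms $\psi_0$ into the pairing $(x,y)\mapsto\psi_0\bigl(N_{K/K^+}(g)\,x,y\bigr)$, using $\psi_0(gx,gy)=\psi_0(x,\bar g g y)$; this is a $\Q^\times$-multiple of $\psi_0$ precisely when $N_{K/K^+}(g)\in\Q^\times$ and equals $\psi_0$ precisely when $N_{K/K^+}(g)=1$. Hence the similitude group of $(V,\psi_0)$ is $T(\Q)=\{g:N(g)\in\Q^\times\}$ and the isometry group is $T^1(\Q)=\{g:N(g)=1\}$, while the integral local automorphisms are $U=T(\A_f)\cap\wh R^\times$ and $U^1=T^1(\A_f)\cap\wh R^\times$. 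Since $(A_0,\lambda_0)$ itself serves as base point, the resulting torsors are pointed, and I obtain canonical bijections
\begin{equation*}
\Lambda(A_0,\lambda_0)\isoto T^1(\Q)\backslash T^1(\A_f)/U^1,\qquad
I(A_0,\lambda_0)\isoto T(\Q)\backslash T(\A_f)/U.
\end{equation*}

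Third, I would carry out the index computation. As $T^1$ and $T$ are commutative, $T^1(\Q)U^1$ and $T(\Q)U$ are subgroups, so each double coset space is a genuine quotient group and its cardinality is an index. Applying Lemma~\ref{CM.1} with $U_\infty=T^1(\R)$ (resp.\ $U_\infty=T(\R)$), and using $T^1(\Z)=T(\Z)=\mu_K$ from the proof of Lemma~\ref{4.7}, the archimedean factor $[T^1(\R):T^1(\Z)U_\infty]$ (resp.\ for $T$) is trivial, and one gets $[T^1(\A_f):T^1(\Q)U^1]=\frac{[T^1(\wh\Z):U^1]}{[\mu_K:\mu_K\cap U^1]}\,h(T^1)$ together with the analogous identity for $T$. (For $T^1$ this is especially transparent since $T^1(\R)=(S^1)^d$ is compact, cf.\ Lemma~\ref{4.6}.) Substituting the two explicit class numbers then yields \eqref{eq:CM.13} and \eqref{eq:CM.14}.

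The main obstacle is the second step: proving that the local--global dictionary genuinely produces these double coset spaces. The delicate points are the treatment of the prime $p=\char k$, where the Tate module is replaced by the Dieudonn\'e module of the $p$-divisible group and one must check that the polarization induces the correct alternating pairing on this $p$-adic datum, and the verification that being locally isomorphic (resp.\ similar) everywhere is exactly the condition trivializing all local invariants, so that the fibre over $(A_0,\lambda_0)$ is a torsor under $T^1(\A_f)/U^1$ (resp.\ $T(\A_f)/U$). Once the base point pins down the torsor, the identification with the double coset space is forced, and the one analytic input still needed --- that a rational multiplier which is a local norm everywhere is a global norm, used to show the similitude factor is realized by an element of $K^\times$ --- is furnished by the Hasse principle already invoked in Lemma~\ref{tau(E)=N} and in the second proof of Theorem~\ref{Main theorem}.
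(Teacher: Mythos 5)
Your proposal is correct and takes essentially the same route as the paper: both reduce the statement to the bijections $\Lambda(A_0,\lambda_0)\simeq T^1(\Q)\backslash T^1(\A_f)/U^1$ and $I(A_0,\lambda_0)\simeq T(\Q)\backslash T(\A_f)/U$, and then conclude by applying Lemma~\ref{CM.1} (with $U_\infty$ the full archimedean group, so that $T(\Z)_\infty=T^1(\Z)=T(\Z)=\mu_K$ and the archimedean index is trivial) together with $h(T^K_1)=\frac{h_K}{h_{K^+}}\cdot\frac{1}{2^{t-r}Q_K}$ and the formula for $h(T^{K,\Q})$ from Theorem~\ref{Main theorem}. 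The only difference is one of presentation: where the paper simply cites the works of Xue and Yu for these bijections, you sketch their standard lattice-theoretic proof (Tate's theorems at $\ell\neq p$, Dieudonn\'e theory at $p=\char k$, and the norm-theorem/Hasse-principle input for the similitude case), which is the same argument as in those references rather than a genuinely different approach.
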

\begin{proof}
The main point is the following natural bijections
\begin{equation}\label{eq:CM.15}
   \Lambda(A_0,\lambda_0)\simeq T^1(\Q)\backslash T^1(\A_f)/U^1, \quad
   \text{and} \quad I(A_0,\lambda_0)\simeq T(\Q)\backslash T(\A_f)/U.  
\end{equation}
See \cite[Theorem 5.8 and Section 5.4]{xue-yu:counting}; also see
\cite[Theorem 2.2]{yu:smf}.  
Then formulas \eqref{eq:CM.13} and \eqref{eq:CM.14} follow from
\eqref{eq:CM.15}, Theorem~\ref{Main theorem}, and
Lemma~\ref{CM.1}. \qed 
\end{proof}

\section*{Acknowledgments}
Guo is partially supported by the MoST grants 
106-2115-M-002-009MY3.
Yu is partially supported by the MoST grants 
104-2115-M-001-001MY3, 107-2115-M-001-001-MY2 and 109-2115-M-001-002-MY3.
The authors thank the referee for a careful reading and helpful
comments which improve the exposition of this paper.

\bibliographystyle{plain}
\bibliography{TeXBiB}

\end{document}